\documentclass[pdflatex]{amsart}
\usepackage{tikz}

\usetikzlibrary{arrows,shapes}

\usetikzlibrary{cd}

\usepackage{amsthm,amssymb,verbatim,amsmath, amsfonts, amscd,color}
\usepackage{amscd}
\usepackage{caption}

\usepackage{color}

\usepackage{verbatim}
\theoremstyle{plain}\newtheorem{Theorem}{Theorem}[section]
\theoremstyle{plain}
\theoremstyle{plain}\newtheorem{Corollary}[Theorem]{Corollary}
\theoremstyle{plain}\newtheorem{Lemma}[Theorem]{Lemma}
\theoremstyle{plain}
\theoremstyle{definition}\newtheorem{Definition}[Theorem]{Definition}
\theoremstyle{definition}
\theoremstyle{definition}
\theoremstyle{definition}\newtheorem{Remark}[Theorem]{Remark}
\theoremstyle{definition}
\theoremstyle{definition}\newtheorem{Notation}[Theorem]{Notation}
\theoremstyle{definition}
\theoremstyle{definition}
\theoremstyle{definition}
\theoremstyle{definition}
\theoremstyle{definition}
\theoremstyle{definition}
\theoremstyle{definition}\newtheorem{Notation/Definition}
[Theorem]{Notation/Definition}
\theoremstyle{definition}

\def\F{{\mathcal{F}}}

\def\Aut{\mathrm{Aut}}
 \def\Syl{\mathrm{Syl}}

\def\dim{\mathrm{dim}}           
\def\End{\mathrm{End}}           
\def\Ext{\mathrm{Ext}}           

\def\Hom{\mathrm{Hom}}

\def\Im{\mathrm{Im}}

\def\Irr{\mathrm{Irr}}

\def\mod{\mathrm{mod}}
\def\rad{\mathrm{rad}}

\def\soc{\mathrm{soc}}

\def\proj{\mathrm{(proj)}}
\def\H{\mathrm{H}}  
\def\D{\mathrm{D}}
\def\C{\mathrm{C}}
\def\K{\mathrm{K}}
\def\mod-{\mathrm{mod}\text{-}}

\newcommand{\SL}{\operatorname{SL}}

\newcommand{\GL}{\operatorname{GL}}

\newcommand{\Sc}{{\text{Sc}}}

\newcommand{\Out}{{\text{Out}}}
\newcommand{\Ker}{{\text{Ker}}}

\begin{document}

\title{the Mathieu group ${\sf M}_{12}$ vs ${\mathbf{SL_3(3)}}$ in characteristic $3$}
%The principal $3$-blocks of the Mathieu group ${\sf M}_{12}$ and the special linear group
%${\SL_3(3)}$ are derived equivalent}
%\date{\today}
\author{Shigeo Koshitani and Tetsuro Okuyama}
\address{S. Koshitani, Department of Mathematics and Informatics,
Graduate School of Science, Chiba
University, 1-33 Yayoi-cho, Inage-ku,Chiba 263-8522, Japan.}
\email{koshitan@math.s.chiba-u.ac.jp} 
\address{T. Okuyama, who passed away on 18th August 2024.}
\email{}

\thanks
{The first author
was partially supported by the Japan Society for Promotion of Science (JSPS),
Grant-in-Aid for Scientific Research (C)19K03416, 2019--2023.
\\
%$^*$The second author passed away on 18th August 2024.
}
\subjclass[2010]{20C05, 20C20, 20C15, 20C33,16D90}

%\keywords
%{
%}

%\dedicatory{}

\begin{abstract} 
We prove that the principal $3$-blocks of the Mathieu group ${\sf M_{12}}$ and 
the special linear group $\SL_3(3)$ are splendidly Rickard equivalent, 
and hence derived equivalent,
by applying Rickard's theorem whose origin goes back to the second author.
As a byproduct we answer to a question on the first Hochschild cohomology
of the principal $3$-blocks above posed by W. Murphy.
\end{abstract}

\maketitle

\pagestyle{myheadings} 
\markboth{S. Koshitani and T. Okuyama}
{The principal 3-blocks of $M_{12}$ and $\SL_3(3)$}

\section{Introduction} 
In modular representation theory of finite groups one of the most important and interesting
conjectures is (so-called) Brou\'e's Abelian Defect Group Conjecture (ADGC). Roughly speaking
it says that, for a prime number $p$, 
if the two principal $p$-blocks $A$ and $B$ of finite groups $G$ and $H$,
respectively, have the same {\sf abelian} defect group $P$ (hence $P$ is a common 
Sylow $p$-subgroup
of $G$ and $H$),
and if furthermore the fusion systems $\mathcal F_P(G)$ and $\mathcal F_P(H)$ over 
$P$ are the same,
then the algebras $A$ and $B$ should be derived equivalent. 
So one might ask what about if we look at a case that $P$ is 
{\sf non-abelian}, such as $p_+^{1+2}$ that is the extra-special group
of order $p^3$ with exponent $p$. Many people actually have looked at this case, for instance
\cite{Kos86, Usa01, RV04, Hen07, NU09, AE25}.
As a matter of fact, around four decades ago the first author classified 
all the possible (not necessary principal but general)
$p$-blocks $A$ with defect group $P=p_+^{1+2}$ for any odd prime $p$, up to
Morita equivalence, whenever $G$ is $p$-solvable \cite[Propositions 1.1 and 1.4]{Kos86}. 
Going back to our subject, Y.~Usami already in 1997 announced that
she proved that there is a perfect isometry between the principal $3$-blocks of 
the Mathieu group ${\sf M_{12}}$
and the special linear group $\SL_3(3)$ (see \cite[Proposition 4]{Usa01} 
and also \cite[Proposition 59]{NU09}). 
The existence of prefect isometries is
a weaker version of Brou\'e's ADGC, by the way.

Actually, in this paper we prove that a stronger conclusion than Usami's holds, 
namely, our main result is:

\begin{Theorem}\label{MainTheorem}
The principal $3$-blocks of the Mathieu group ${\sf M}_{12}$ and 
the special linear group $\SL_3(3)$ are
splendidly Rickard (splendidly derived) equivalent.
\end{Theorem}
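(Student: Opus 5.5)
We will apply Rickard's theorem on gluing splendid equivalences (the Okuyama-style method), in the following form. Let $G={\sf M}_{12}$ and $H=\SL_3(3)$ with common Sylow $3$-subgroup $P\cong 3_+^{1+2}$. One checks that the fusion systems $\F_P(G)$ and $\F_P(H)$ coincide, which we will do by identifying $N_G(P)$ and $N_H(P)$ and the normalizers of the elementary abelian subgroups of order $9$. We will then construct a bounded complex $X$ of $p$-permutation $(A,B)$-bimodules, where $A=B_0(kG)$ and $B=B_0(kH)$, such that for every nontrivial subgroup $Q\le P$ the Brauer construction $X(\Delta Q)$ induces a Rickard equivalence between the principal blocks of $C_G(Q)$ and $C_H(Q)$. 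By Rickard's theorem (the splendid gluing theorem), it then suffices to show that $X$ induces a stable equivalence of Morita type and that the resulting functor sends the simple $A$-modules to objects that assemble to a derived equivalence.

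The local step comes first. For $Q$ of order $3$ or $9$ and for $Q=P$, the centralizers $C_G(Q)$ and $C_H(Q)$ are small: they are essentially $3$-nilpotent modulo a normal $3'$ part, or of the form $3\times A_4$ / $3\times S_3$ type. Their principal blocks are therefore Morita (indeed Puig) equivalent via Green correspondence, or derived equivalent via known results for abelian defect $3^2$ with small inertial quotient (Koshitani--Kunugi, Okuyama). These local equivalences must be compatible with the $N(Q)$-actions, so we will choose them as restrictions of the Scott module $\Sc(G\times H,\Delta P)$ whenever possible. The first approximation of the global complex will accordingly be the Scott module $M=\Sc(G\times H,\Delta P)$, which gives a stable equivalence of Morita type between $A$ and $B$ provided that $M(\Delta Q)$ induces Morita equivalences locally. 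This follows from the Brauer indecomposability of $M$, which we will verify by the Ishioka--Kunugi criterion using the fusion-system agreement.

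Next we will compute, for each simple $B$-module $S$, the image $S\otimes_B M^*$, or more precisely its non-projective part, in the stable category of $A$. We will use the decomposition matrices and Loewy structures of both principal blocks: ${\sf M}_{12}$ and $\SL_3(3)$ each have $k(B)=\ldots$ ordinary characters, and the perfect isometry of Usami will guide the matching. Where $S\otimes M$ is not simple, we will apply Okuyama's tilting procedure. That is, we will replace $M$ by a two-term (or few-term) complex $X=\mathrm{Cone}(P_S\otimes\ldots\to M)$, given by projective covers and relative projective $(A,B)$-bimodules, which successively tilts the simples until every image becomes a simple module up to shift. Finally, we will invoke Linckelmann's lemma (Rickard's version for complexes): if the images of all simples are simple modules, or more generally form a set satisfying the orthogonality and generation conditions, then the stable equivalence lifts to a splendid Rickard equivalence.

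We expect the main obstacle to be this middle computation. It requires determining the images of the simple modules under the stable equivalence and finding the right sequence of elementary tilts, because the defect group is non-abelian and the $\Ext$-quiver of $B_0(k{\sf M}_{12})$ is intricate. We will attack it by computing the Green correspondents of the simples with respect to $N(P)$ and using the radical and socle series of the principal indecomposable modules, which can be computed from the known $3$-modular data of both groups.
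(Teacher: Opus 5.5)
You have the paper's global architecture right: $\F_P(G)=\F_P(H)$; Brauer indecomposability of $\mathfrak M=\Sc(G\times H,\Delta P)$ via the Ishioka--Kunugi criterion; local Morita equivalences given by $\mathfrak M(\Delta R)$; gluing to a splendid stable equivalence of Morita type $F$; and finally Rickard's lifting theorem [Ric02, Theorem 6.1]. That theorem applies to objects of $\D^b(\mathrm{mod}\text{-}B)$ that are stably isomorphic to the $F(S)$ and satisfy orthogonality, vanishing of negative Homs and generation, with [Hol03, Theorem 4.4] giving splendidness.

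The local step is simpler than you suggest. For $1\neq R\le P$, either $C_G(R)=C_H(R)$ or $B_0(kC_G(R))\cong k(C_3\times C_3)=kC_H(R)$. So $\mathfrak M(\Delta R)$ is just the regular bimodule, and no abelian-defect derived equivalences are needed. Also, $C_3\times\mathfrak S_3$ is $N_H(R)$, not a centralizer.

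However, everything between the stable equivalence and the lifting theorem is left as a program, which you yourself flag as the main obstacle. That part \emph{is} the proof, so as written the proposal does not establish the theorem.

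Moreover, the tool you propose for that step would not reach all the simples. Green correspondence with respect to $N(P)$ cannot be applied to $45$ and $45'$, nor to $6$ and $6^*$ if you go from $B$ to $A$. Their vertices are the non-conjugate elementary abelian groups $Q^\sigma$ and $Q$ of order $9$. Green correspondence for modules with vertex $R$ requires a subgroup containing $N_G(R)$, but $N_G(Q)$ has order $432$ and is not contained in $N_G(P)$, which has order $108$.

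The paper instead compares $G$ and $H$ inside their common subgroup $N=N_G(Q)=N_H(Q)\cong (C_3\times C_3)\rtimes\GL_2(3)$. It uses the Green correspondences for $(G,P,N)$, $(H,P,N)$, $(G,Q,N)$ and $(H,Q,N)$, together with three further facts:
$Q\cap Q^h=1$ for $h\in H-N$, so that $f(45')\uparrow^H_N=F(45')\oplus(\mathrm{proj})$;
an ${\sf M}_{11}$ argument giving $f(45')\uparrow^G_N=45'\oplus(\mathrm{proj})$;
compatibility of Green correspondence with the relative Heller operator $\Omega_Q$.

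This yields $F(k)=k$, $F(15)=15$, $F(15^*)=15^*$, $F(34)=7$ and $F(10)=\mathcal H_Q(k_H)$. It also gives a $72$-dimensional $F(45')$ with top $3^*\oplus 6$.

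The complexes are then $P(6)\to F(10)$ and the three-term $P(6^*)\to P(3^*)\oplus P(6)\to F(45')$. The latter has cohomology $6^*$ and $15^*$ in two different degrees, so a single Okuyama-type two-term tilt does not suffice. The paper instead checks Rickard's hypotheses directly, using:
the pull-back description of $\Hom$ in $\D^b$ between complexes with two non-zero cohomology groups;
$\dim_k\Ext^2_B(15,6)=1$ and $\Ext^1_B(U(k,7,3^*,6^*),6)=0$;
the Loewy structures of $P(6)$ and $P(15)$.

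Finally, the paper deliberately goes from $A$ to $B$, so that all these computations take place over $B_0(k\SL_3(3))$, whose projective indecomposables are explicitly known. Your $B\to A$ direction would push the same work into $B_0(k{\sf M}_{12})$.
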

\noindent
Though our result is on very specific two finite simple groups, we emphasize that the origin of
the theorem in \cite{Ric02} we heavily depend on goes back to the second author
as Rickard says many times.
In fact, the first author did not realize that such very concrete calculations
would be useful/powerful to check an unknown fact 
after over a few decades.

As a corollary the derived equivalence lifts to the bigger groups that have $3'$-index $2$;

\begin{Corollary}\label{Cor}
The principal $3$-blocks of $\Aut({\sf M}_{12})\cong {{\sf M}_{12}\rtimes 2}$ and
$\Aut(\SL_3(3))\cong {\SL_3(3)\rtimes 2}$ are splendidly Rickard (splendidly derived)
equivalent.
\end{Corollary}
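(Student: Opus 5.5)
The plan is to deduce Corollary~\ref{Cor} from Theorem~\ref{MainTheorem} by an extension argument for splendid Rickard complexes to overgroups of $p'$-index. Write $G={\sf M}_{12}$, $H=\SL_3(3)$, $\tilde G=\Aut(G)=G\rtimes 2$, $\tilde H=\Aut(H)=H\rtimes 2$, and let $P\cong 3_+^{1+2}$ be a common Sylow $3$-subgroup. Since $|\tilde G:G|=|\tilde H:H|=2$ is prime to $3$, $P$ is also Sylow in $\tilde G$ and $\tilde H$. The principal blocks $B_0(\tilde G)$ and $B_0(\tilde H)$ cover $B_0(G)$ and $B_0(H)$ respectively.

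The main tool is the result of Marcus (and its splendid version due to Harris) on extending Rickard complexes. Let $X$ be the splendid Rickard complex of $(B_0(G),B_0(H))$-bimodules from Theorem~\ref{MainTheorem}. Form the diagonal subgroup
\[
\Delta=\{(g,h)\in\tilde G\times\tilde H : gG\mapsto hH\},
\]
using an identification $\tilde G/G\cong\tilde H/H\cong C_2$. If $X$ extends to a complex of $\mathcal O\Delta$-modules, then
\[
\tilde X=\mathrm{Ind}_{\Delta}^{\tilde G\times\tilde H}X
\]
is a Rickard complex between $\mathcal O\tilde G\,b$ and $\mathcal O\tilde H\,c$, where $b$ and $c$ are the block idempotents covering the principal blocks. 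The summands of $\tilde X$ are $p$-permutation modules with vertices twisted diagonal in $P\times P$, so $\tilde X$ is splendid. Finally, I would cut $\tilde X$ by the principal block idempotents to isolate $B_0(\tilde G)$ and $B_0(\tilde H)$. For this I need to know that $\tilde G$ has a unique block covering $B_0(G)$, and likewise for $\tilde H$. This follows because the outer automorphisms fix $P$ up to conjugacy and stabilize the principal block. Since the index is $2$, there are at most two covering blocks, and one checks they are matched; alternatively I would simply restrict to the principal summand.

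The key step is therefore to verify that $X$ is $\Delta$-stable, namely that $X$ is isomorphic to its twist by an outer involution $(\sigma,\tau)$. Equivalently, the construction in the proof of Theorem~\ref{MainTheorem} via Rickard's theorem \cite{Ric02}, which builds $X$ from the trivial source modules and Green correspondents with respect to $N_G(P)$ and $N_H(P)$, should be equivariant under automorphisms. I would argue as follows. Choose $\sigma\in\Aut(G)$ and $\tau\in\Aut(H)$ normalizing $P$ and inducing the same automorphism of $P$ and of the common fusion system. Then every ingredient of the construction is determined up to isomorphism by data preserved under $(\sigma,\tau)$: the simple modules, the relevant $p$-permutation modules, and the Green correspondences. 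This gives ${}^{(\sigma,\tau)}X\cong X$.

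Once $X$ is $\Delta$-stable, the obstruction to extending it lies in cohomology of the quotient $C_2$ with coefficients in $k^\times$ or $\mathcal O^\times$, so it vanishes (Marcus, or Harris's splendid refinement). Hence $X$ extends to $\Delta$.

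I expect the main obstacle to be this stability check. Concretely, I must confirm that the outer automorphism of ${\sf M}_{12}$ and the graph automorphism of $\SL_3(3)$ act compatibly on $P$ and on the simple modules of the principal $3$-blocks. Such an automorphism can interchange simple modules, for instance those of dimension $11$ for ${\sf M}_{12}$ and the dual pairs for $\SL_3(3)$. The stability then requires that the bijection of simples underlying $X$ intertwines these two actions. I would check this directly using the decomposition matrices and character tables of $\Aut(G)$ and $\Aut(H)$, in particular which characters fuse or extend.
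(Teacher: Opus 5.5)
\textbf{This is a genuinely different route from the paper's, and its key step is not proved.} The paper does not extend the complex of Theorem~\ref{MainTheorem}. It reruns the Okuyama--Rickard procedure directly for $\tilde G=\Aut(G)$ and $\tilde H=\Aut(H)$: it writes down complexes $X^\bullet(\tilde S)$ for the seven simple $B_0(k\tilde G)$-modules, using the known module structures of the two automorphism groups, and re-verifies Rickard's conditions. A Marcus/Harris $p'$-extension is a legitimate alternative that would spare those computations. But as written, its decisive step, ${}^{(\sigma,\tau)}X\cong X$, is not established. Rickard's Theorem 6.1 only asserts that \emph{some} complex exists, so ``every ingredient is preserved by $(\sigma,\tau)$'' says nothing yet about the complex it produces. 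The check you propose (decomposition matrices, which characters fuse or extend) only sees the bijection of simples and character-level data. That can never give an isomorphism of complexes.

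The missing idea is a uniqueness argument, combined with symmetry the paper already has.
(1) ${}^{(\sigma,\tau)}\mathfrak M\cong\mathfrak M$. Since $u^\sigma=u^\tau$ on $P$, the pair $(\sigma,\tau)$ normalizes $\Delta P$, and a Scott module is determined by its vertex.
(2) $X^\bullet(S^\sigma)\cong X^\bullet(S)^\tau$ for every simple $A$-module $S$. This follows from Lemma~\ref{stableEqA-B}(iv), (\ref{sigma-tau}), and the way the table of Lemma~\ref{complex} is built.
(3) Take $X$ as Rickard's theorem supplies it: projective bimodule terms outside degree $0$, and $\mathfrak M\oplus(\text{proj})$ in degree $0$. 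By (1), ${}^{(\sigma,\tau)}X$ has the same shape. Set $W:={}^{(\sigma,\tau)}X\otimes_B X^*$. It has projective terms outside degree $0$ and $A\oplus(\text{proj})$ in degree $0$, and by (2) it fixes every simple. Hence its homology is an invertible bimodule $U$ concentrated in degree $0$. Splitting off the outer projective (= injective) terms gives $W\simeq U$. Since $U$ is indecomposable, non-projective, and a summand of $A\oplus(\text{proj})$, we get $U\cong A$. Therefore ${}^{(\sigma,\tau)}X\simeq W\otimes_A X\simeq X$ in the homotopy category. Only at this point do Marcus's extension (obstruction in $H^2(C_2,k^\times)=0$) and induction work as you describe.

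Two smaller corrections.
\begin{itemize}
\item $\tilde G$-stability of $B_0(kG)$ does not force a unique covering block; for example $G\times C_2$ has two. The correct criterion is $C_{\tilde G}(P)\le G$ and $C_{\tilde H}(P)\le H$. This holds here because the automorphism $\sigma|_P=\tau|_P$ is not induced by $N_G(P)=N_H(P)$; the Sylow $3$-normalizers grow to $3^{1+2}_+\!:\!D_8$. This is consistent with the sign module $\tilde 1$ lying in $B_0(k\tilde G)$ in the paper's table.
\item $B_0(k{\sf M}_{12})$ has no $11$-dimensional simple module. The $\sigma$-orbits are $\{10,10'\}$, $\{15,15^*\}$ and $\{45,45'\}$.
\end{itemize}
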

\noindent
Theorem~\ref{MainTheorem} and a well-known fact that
a derived equivalence preserves the Hochschild cohomology structures (due to 
Happel) % and Rickard) 
immediately imply the
following corollary W.~Murphy asks in \cite{Mur23}.

\begin{Corollary}\label{HH^1}{\rm{(Cf}}.\,\cite[Remark\ 5.3]{Mur23}{\rm{)}}
The first Hochschild cohomology groups
\linebreak
${\mathrm{HH}}^1(B_0(k{\sf M}_{12}))$ and ${\mathrm{HH}}^1(B_0(k\SL_3(3)))$,
respectively,
of the principal $3$-blocks 
of ${\sf M}_{12}$ and 
of $\SL_3(3)$
are isomorphic as Lie algebras, where $k$ is an algebraically closed field of characteristic $3$.
\end{Corollary}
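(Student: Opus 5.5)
The plan is to deduce this directly from Theorem~\ref{MainTheorem} together with the invariance of Hochschild cohomology under derived equivalence. Write $A=B_0(k{\sf M}_{12})$ and $B=B_0(k\SL_3(3))$. Since these blocks are splendidly Rickard equivalent, in particular there is a two-sided tilting complex $X$ of $A$-$B$-bimodules (bounded, with terms projective on each side) such that $X\otimes_B X^\ast\simeq A$ and $X^\ast\otimes_A X\simeq B$ in the homotopy categories of bimodules. If $k$ is not already the coefficient ring used in Theorem~\ref{MainTheorem}, I would first reduce an equivalence over a complete discrete valuation ring $\mathcal O$ modulo the maximal ideal. This gives an equivalence over $k$, since tensoring a splendid complex with $k$ preserves the splendid and tilting properties.

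Next I would use the enveloping algebras $A^e=A\otimes_k A^{\rm op}$ and $B^e$. The functor $X^\ast\otimes_A -\otimes_A X$ induces a derived equivalence $D^b(A^e)\simeq D^b(B^e)$. This is the standard argument of Rickard: it is given by tensoring with the two-sided tilting complex $X^\ast\otimes_k X$ over $A^e$-$B^e$. It sends $A$ to $X^\ast\otimes_A X\cong B$. Hence
\[
\mathrm{HH}^n(A)=\Hom_{D^b(A^e)}(A,A[n])\cong\Hom_{D^b(B^e)}(B,B[n])=\mathrm{HH}^n(B)
\]
for all $n$, as graded vector spaces, and in fact as graded algebras.

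The real content lies in the Lie structure on $\mathrm{HH}^1$, that is, the Gerstenhaber bracket. Here I would quote the theorem of Keller that derived equivalence preserves Hochschild cohomology as a Gerstenhaber algebra (Happel for the graded algebra, Keller for the bracket). Restricting to degree $1$ gives an isomorphism of Lie algebras $\mathrm{HH}^1(A)\cong\mathrm{HH}^1(B)$. The bracket on $\mathrm{HH}^1$ is the commutator of outer derivations.

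I expect no real obstacle: the only delicate point is the bracket, and it is handled by Keller's result. If one wanted to avoid that result, there is a more hands-on route using the stable equivalence of Morita type induced by the splendid complex. An $A$-$B$ bimodule $M$ giving a stable equivalence of Morita type, for selfinjective algebras without semisimple summands, induces a Lie algebra isomorphism on $\mathrm{HH}^1$ by Linckelmann and König–Liu–Zhou. However, the Keller citation suffices.
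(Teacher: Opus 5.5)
Your proposal is correct and is essentially the paper's argument. The paper deduces the corollary directly from Theorem~\ref{MainTheorem} together with the derived invariance of Hochschild cohomology (citing Proposition 2.21.9 of Linckelmann's book, credited to Happel in the introduction); your appeal to Keller's theorem makes explicit why the isomorphism on $\mathrm{HH}^1$ respects the Lie bracket, and your reduction from $\mathcal{O}$ to $k$ is unnecessary since Theorem~\ref{MainTheorem} is already stated over $k$.
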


%%\pagebreak
\section{Notation and the structure of this paper}
We use the following notation and conventions.
Throughout this paper, $k$ is an algebraically closed field of a prime characteristic $p$. 
We write $A$ and $B$ for finite-dimensional $k$-algebras.
We denote by $1_A$ and $A^\times$, respectively, 
the unit element of $A$ and the set of all units in $A$.
By an $A$-{\sf module}, we mean a finitely generated 
{\sf right} $A$-module unless we specify otherwise.
Sometimes we use the notation $X_A$, $_A\!Y$ and $_AU_B$ 
to emphasize that $X$ is a {\sf right}
$A$-module, $Y$ is a {\sf left} $A$-module and $U$ is an $(A,B)$-{\sf bimodule}.
We write $\mod-A$ for the abelian category of finitely generated $A$-modules, and
${\mathrm{proj}}\text{-}A$ for the category of finitely generated projective $A$-modules. 
We denote by ${\mathrm{rad}}(A)$ and ${\mathrm{soc}}(A)$ the Jacobson radical and the socle
of $A$, respectively. 
Fix $X\in\mod-A$ with $X\,{\not=}\,0$.
Then, ${\mathrm{soc}}(X)$ denotes the socle of $X$.
We write $L_i(X)$ and $S^i(X)$, respectively, 
for the $i$-th Loewy layer and socle layer of $X$ for
each $i=1,2,\cdots$. More precisely,
$L_i(X):=X{\cdot}J^i/X{\cdot}J^{i-1}$ where $J:={\mathrm{rad}}(A)$,
$S^1(X):={\mathrm{soc}}(X)$ and $S^i(X)/S^{i-1}(X):={\mathrm{soc}}(X/S^{i-1}(X))$
for $i=2,3,\cdots$. For such an $X$ we denote by $\ell\ell(X)$ the Loewy length of $X$,
namely $\ell\ell(X)$ is the smallest positive integer $\ell$ with $X{\cdot}\rad(A)^\ell=0$.
We write $P(X)$ and $I(X)$, respectively, for 
the projective cover and the injective hull (envelope) of $X$.
We denote by $\Irr_k(A)$ the set of all non-isomorphic simple (left) $A$-modules.
For $S_1,\cdots,S_n\in\Irr_k(A)$ where $n$ is a positive number 
(possibly $S_i\cong S_j$ for some $i\,{\not=}\,j$)
we write
$ M=U(S_1, S_2, \cdots,S_n)$
for a uniserial $A$-module $M$
such that $L_i(M)\cong S_i$ for $i=1,\cdots,n$
(note that it may happen two $A$-modules $X$ and $Y$ have the same descriptions 
as above though $X$ and $Y$ are non-isomorphic).
Similarly, 
for $X\in\mod-A$ with $X\,{\not=}\,0$, we write $X=m_1\times S_1+\cdots+m_\ell\times S_\ell$
(as composition factors) when all the composition factors of $X$ are just as this 
where $\ell$ and $m_1, \cdots ,m_\ell$ are positive integers and $S_i\in\Irr_k(A)$ such that
$S_1,\cdots, S_\ell$ are all non-isomorphic each other.

For $S, T\in\Irr_k(A)$, $c_{S,T}$ denotes the Cartan invariant with respect to $S$ and $T$,
in other words, $c_{S,T}:=\dim_k\,\Hom_A(P(S),P(T))$. More generally for the same $X$
as in the previous paragraph
and $S$,
we write $c_X(S)$ for the multiplicity of the composition factors of $X$ that are isomorphic to $S$.
Let $\mathcal C$ be an (abelian or triangulated) category.
For such $\mathcal C$ we simply write $X^\bullet\in\mathcal C$ if $X^\bullet$ is an object
of $\mathcal C$. We denote by {\sf stmod}$A$ the stable module category of $A$-modules, 
which is the quotient
category of $\mod-A$ by the ideal of maps that factor through projective modules.
This is a triangulated category whose shift functor is the inverse $\Omega^{-1}$ of the Heller
translate (operator) $\Omega$. 
In particular, for $X\in\mod-A$, $\Omega^0X$ denotes the projective-free part of $X$,
that is $X=\Omega^0X\oplus(\mathrm{proj})$ and $\Omega^0X$ has no 
projective indecomposable
direct summand.
Fix $X, Y\in\mod-A$. Then we denote by 
$\underline{\Hom}_A(X,Y)$ the space of the
homomorphisms from $X$ to $Y$ in {\sf stmod}$A$,
namely,
$\underline{\Hom}_A(X,Y):=\Hom_A(X,Y)/\mathcal P\Hom_A(X,Y)$,
where $\mathcal P\Hom_A(X,Y)$ is the set of all $f\in\Hom_A(X,Y)$ such that $f$ is 
relatively projective. 
We write $X=Y\oplus{\mathrm{\proj}}$ when $X$ is a direct sum of
$Y$ and a certain projective $A$-module.
We write $Y\,|\,X$ if $Y$ is a direct summand of $X$.
Complexes treated here are {\sf cochain} complexes, that means that the differentials have
degree $+1$.
For an abelian category $\mathcal A$, we denote by 
$\C^b(\mathcal A)$, $\K^b(\mathcal A)$ and $\D^b(\mathcal A)$, respectively,
the category of bounded complexes, the bounded homotopy category 
and the bounded derived category of $\mathcal A$.
For $M,N\in\mod-A$, $\delta_{M,N}$ is (so-called) the Kronecker delta.

For a positive integer $n$ we write $C_n, \mathfrak S_n$ and $\mathfrak A_n$ 
for the cyclic group of order $n$, 
the symmetric group of degree $n$ and the alternating group of degree $n$, respectively.
Further, we denote by $D_{2^n}$ for an integer $n\geq 3$ and $SD_{2^n}$ for an integer $n\geq 4$,
respectively, the dihedral group and the semidihedral group of order $2^n$.
Let $G$ be a finite group and assume that $H\leq G$.
We write $\Delta H$ for the diagonal subgroup $\{(h,h)\in H\times H\,|\,h\in H\}$ of $H\times H$.
As standard convention, $O_{p'}(G)$ denotes the largest normal $p'$-subgroup of $G$, and further
$\Aut(G)$ and $\Out(G)$, respectively, denote the automorphism group and the outer-automorphism
group of $G$.
We denote by $C_G(H)$ and $N_G(H)$, respectively, the centralizer and the normalizer of $H$ in $G$,
in particular $Z(G):=C_G(G)$ is the center of $G$. 
For $H, L\leq G$, by $G=H\rtimes L$ we mean a semi-direct product of $H$ and $L$ 
where $H\unlhd G$.
We write $\Syl_p(G)$ for the set of all Sylow $p$-subgroups of $G$.
%For $H\leq G$, we denote by $C_G(H)$ and $ N_G(H)$ the centralizer and the normalizer,
%respectively, of $H$ in $G$, and $Z(G)$ denotes the center of $G$.
For $g, x\in G$ we define $x^g:=g^{-1}xg$, and 
for a subset $\mathcal S\subseteq G$ we define $\mathcal S^g:=\{s^g\,|\,s\in\mathcal S\}$.
For $g, g'\in G$ we write $g=_G g'$ when $g'=g^x$ for some $x\in G$, and for a subset 
$S\subseteq G$ and $g\in G$ we write $g\in_G S$ if $g=_G s$ for some $s\in S$. 
For a $p$-subgroup $P$ of $G$, $\mathcal F_P(G)$ denotes the fusion system over $P$
with respect to $G$-conjugation, more precisely, for subgroups $Q, R\leq P$,
$\Hom_{\mathcal F_P(G)}(Q,R):=\{ c_g\,|\, g\in G\text{ such that }Q^g\subseteq R\}$,
where $c_g(u):=g^{-1}ug$ for $u\in Q$ (see \cite[8.1]{Lin18}).

Even in this paragraph we keep the assumption $G\geq H$ as above.
We write $k_G=k$ for the one-dimensional trivial $kG$-module
and $\Irr(G)$ for the set of all irreducible ordinary characters of $G$.
For $M\in\mod-kG$ and $N\in\mod-kH$, we write $M{\downarrow}_H$ and 
$N{\uparrow}^G$ for the restriction of $M$
to $H$ and the induction of $N$ to $G$, respectively. 
We sometimes write also like $M{\downarrow}^G_H$ and $N{\uparrow}_H^G$ to emphasize the
original groups.
For the same $M$, the $k$-dual of $M$
(which is isomorphic to the $kG$-dual of $M$ since $kG$ is a symmetric $k$-algebra)
is denoted by $M^*$ that becomes also a {\sf right} $kG$-module.
We write $\Omega_H$ for the relatively $H$-projective Heller functor (operator), and hence,
for $M\in\mod-kG$, $\Omega_H(M)$ is the kernel of the epimorphism
$P_H(M)\twoheadrightarrow M$ where $P_H(M)$ is the relatively $H$-projective cover of $M$
(see \cite[2.7]{Lin18}).
In particular, we define the heart $\mathcal H_H(k_G)$ of the trivial $kG$-module $k_G$ by
$\mathcal H_H(k_G):=\Omega_H(k_G)/k_G$.
We denote by $B_0(kG)$ the principal block (algebra) of the group algebra $kG$, and
we write $\Irr (G)$ and $\Irr(B_0(kG))$, respectively, for the sets of all irreducible ordinary characters
of $G$ and of those which belong to $B_0(kG)$.
For a $p$-subgroup $P\leq G$ and $H\leq G$ with $H\geq N_G(P)$ 
we denote by $f_{(G,P,H)}$ the Green correspondence with respect to the triple
$(G,P,H)$ and we define
$\mathfrak X:=\mathfrak X(G,P,H)$ and also $\mathfrak Y$ and $\mathfrak Z$ just as in
\cite[Chap.4 (4.1) and Theorem 4.3]{NT88}. 
Finally, for $M, M'\in\mod-kG$, we denote $\dim_k\,\Hom_{kG}(M,M')$ by $[M,M']^G$.
For the other convention and terminology, see the books
\cite{NT88, 
Lin18}.

The structure of this paper is the following.
In \S3 we list several general lemmas which shall be useful for out main results.
In \S4 we introduce almost of all the notation we use throughout the whole paper,
and also we list many lemmas which play important roles to prove our main theorem.
In \S\S 5-9 we construct complexes in the module categories of the principal
$3$-blocks of the two specific finite simple groups ${\sf M}_{12}$ and $\SL_3(3)$ 
%we are looking at 
by making full use of the results (\cite{Kos86, Kos87, KW99a, KW99b})
done by the first author (partly with Waki) over several decades ago.
Finally in \S 10 proofs of our main results are presented. 

\section{The general recipe}

\begin{Notation}\label{Notation9}
Throughout this section we assume that $k$ is an algebraically closed field
of characteristic $p>0$ and that $A$ is a finite-dimensional self-injective $k$-algebra.
\end{Notation}

\noindent
Actually the next lemma is quite useful to apply Rickard's theorem \cite[Theorem 6.1]{Ric02}
when our complexes $X^\bullet$ have homology groups $\H^i(X^\bullet)$ such that
$\H^i(X^\bullet)=0$ except $i=0, 1$.
\begin{Lemma}\label{Zimmer-6.6.2}{\rm (}See \cite[Proposition~6.6.2]{Zim14}{\rm .)} \ 
%Let $A$ be a finite-dimensional $k$-algebra, and 
Let $X^\bullet$ and $Y^\bullet$ be objects in $\D^{\mathrm{b}}({\mathrm{mod}}\text{-}A)$,
(and hence objects in $\C^{\mathrm{b}}({\mathrm{mod}}\text{-}A)$).
Assume that the following {\sf both} conditions hold.
\begin{enumerate}
\renewcommand{\labelenumi}{\rm{(\alph{enumi})}}
\item 
$\H^i(X^\bullet)=0$ and $\H^i(Y^\bullet)=0$ for every $i\in\mathbb Z-\{0,1\}$
\item
$\Ext_A^1(\H^1(X^\bullet),\H^0(Y^\bullet))=0$.
\end{enumerate}
Then, there exist canonical homomorphisms
$$\Hom_A(\H^0(X^\bullet),\H^0(Y^\bullet))\overset{f}{\longrightarrow}
\Ext_A^2(\H^1(X^\bullet),\H^0(Y^\bullet))$$ and 
$$
\Hom_A(\H^1(X^\bullet),\H^1(Y^\bullet))\overset{g}{\longrightarrow}
\Ext_A^2(\H^1(X^\bullet),\H^0(Y^\bullet))$$
and further
the pull-back of $f$ and $g$ is the following:
\[
\begin{tikzcd}
 \Hom_{\D^{\mathrm{b}}(\mod- A)}(X^\bullet,Y^\bullet)\ar[r, dashed, "\varphi"] \arrow[d, dashed, "\psi"] 
&\Hom_A(\H^1(X^\bullet),\H^1(Y^\bullet)) \ar[d, "g"] \\
\Hom_A(\H^0(X^\bullet),\H^0(Y^\bullet)) \ar[r, "f"] 
&\Ext_A^2(\H^1(X^\bullet),\H^0(Y^\bullet)).
\end{tikzcd}
\]
\end{Lemma}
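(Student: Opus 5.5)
The plan is to use the truncation triangles of $X^\bullet$ and $Y^\bullet$ in $\D^{\mathrm b}(\mod-A)$. Write $H^i_X:=\H^i(X^\bullet)$ and $H^i_Y:=\H^i(Y^\bullet)$. By (a), the standard truncations $\tau_{\le 0}$ and $\tau_{\ge 1}$ give distinguished triangles
\[
H^0_X\to X^\bullet\to H^1_X[-1]\xrightarrow{\ \alpha\ } H^0_X[1],\qquad
H^0_Y\to Y^\bullet\to H^1_Y[-1]\xrightarrow{\ \beta\ } H^0_Y[1].
\]
Here modules are viewed as complexes concentrated in degree $0$. The classes $\alpha\in\Ext^2_A(H^1_X,H^0_X)$ and $\beta\in\Ext^2_A(H^1_Y,H^0_Y)$ determine $X^\bullet$ and $Y^\bullet$ up to isomorphism. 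Taking $\H^0$ and $\H^1$ gives the maps $\psi$ and $\varphi$. I define $f(a):=a_*\alpha$ for the pushout of $\alpha$ along $a$, and $g(b):=b^*\beta$ for the pullback of $\beta$ along $b$. Both land in $\Ext^2_A(H^1_X,H^0_Y)$.

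First I check that the square commutes. Given $u:X^\bullet\to Y^\bullet$, the truncation functors are functorial, so $u$ induces a morphism of triangles. Its components are $\psi(u)$ on the $H^0$ terms and $\varphi(u)[-1]$ on the $H^1$ terms. The commutativity of the third square of that morphism of triangles is exactly $\psi(u)[1]\circ\alpha=\beta\circ\varphi(u)[-1]$, which says $f\psi(u)=g\varphi(u)$.

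Next I show that the induced map $\Hom_{\D}(X^\bullet,Y^\bullet)\to\Hom(H^0_X,H^0_Y)\times_{\Ext^2}\Hom(H^1_X,H^1_Y)$ is bijective. For surjectivity, take a compatible pair $(a,b)$ with $a_*\alpha=b^*\beta$. This says the relevant square of the connecting maps commutes, so axiom TR3 gives a morphism of triangles, and hence some $u$ with $\psi(u)=a$ and $\varphi(u)=b$.

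Injectivity is the step I expect to be the main obstacle. Suppose $u$ induces zero on both homologies. Applying $\Hom_{\D}(-,Y^\bullet)$ to the triangle of $X^\bullet$, and since the composite $H^0_X\to X^\bullet\xrightarrow{u} Y^\bullet$ is a map from $H^0_X$ into $\tau_{\le0}Y^\bullet=H^0_Y$ equal to $a=0$, the morphism $u$ factors as $X^\bullet\to H^1_X[-1]\xrightarrow{v} Y^\bullet$. Composing $v$ with $Y^\bullet\to H^1_Y[-1]$ yields the component $\varphi(u)=0$, up to a correction that I will need to check vanishes. Granting that, $v$ factors through $H^0_Y\to Y^\bullet$ via some map $H^1_X[-1]\to H^0_Y$, that is, an element of $\Ext^1_A(H^1_X,H^0_Y)$. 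This group is $0$ by (b), so $v=0$ and $u=0$.

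The delicate point is the ambiguity in choosing $v$. Changing $v$ alters it by maps through the connecting morphism $\alpha$, and I must verify that the image of $v$ in $\Hom(H^1_X[-1],H^1_Y[-1])$ is still $\varphi(u)=0$. This follows because $\Hom_{\D}(H^0_X[1],H^1_Y[-1])=\Ext^{-2}=0$. The factoring maps also need checking for vanishing negative Exts, which are automatic for modules. Canonicity of $f$ and $g$ is clear, since they depend only on the canonical truncation triangles.
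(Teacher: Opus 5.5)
Your proof is correct. It also does more than the paper, which gives no argument of its own: it only cites \cite[Proposition~6.6.2]{Zim14} and leaves the ``canonical'' maps $f$ and $g$ unspecified.

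You derive the statement directly from the truncation triangles $\H^0(X^\bullet)\to X^\bullet\to\H^1(X^\bullet)[-1]\xrightarrow{\alpha}\H^0(X^\bullet)[1]$ and the analogous triangle for $Y^\bullet$ with class $\beta$. The three steps all go through:
\begin{enumerate}
\item The square commutes by naturality of truncation.
\item Surjectivity onto the fibre product follows from TR3 on the rotated triangles. Applying $\H^0$ and $\H^1$ to the squares of the resulting morphism of triangles gives $\psi(u)=a$ and $\varphi(u)=b$.
\item Injectivity uses the vanishing of negative Ext groups to remove the ambiguity in $v$, via $\Hom(\H^0(X^\bullet)[1],\H^1(Y^\bullet)[-1])=0$, together with hypothesis (b).
\end{enumerate}

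What your route buys over the bare citation is explicitness. It shows that $f(a)=a[1]\circ\alpha$ and $g(b)=\beta\circ b[-1]$, and that (b) is needed only for injectivity.

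This is exactly the information needed when the lemma is applied in Lemma~\ref{Hom(X(45),X(45))}. There $\Hom_B(6,6)$, $\Hom_B(15,15)$ and $\Ext^2_B(15,6)$ are all one-dimensional. The pull-back is therefore one-dimensional precisely because the truncation class of $X^\bullet(45)[-2]$ in $\Ext^2_B(15,6)$ is nonzero. That class corresponds to the extension $0\to P(6)/6\to\Omega F(45)\to 15\to 0$, which is non-split because $\Omega F(45)$ is indecomposable. If the class were zero, the pull-back would be two-dimensional. The citation is shorter but hides this point.
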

\begin{proof}
These follow from \cite[%\S6.6.2 on pp.600--601, and 
Proposition 6.6.2]{Zim14}.
\end{proof}

\begin{Lemma}\label{F(simple)}
Suppose that $A$ and $B$ are finite-dimensional self-injective $k$-algebras such tha
$A/\rad(A)$ and $B/\rad(B)$ are both separable
(see \cite[Definition 1.16.10]{Lin18}). %and Proposition 1.16.21]{Lin18}).
Further suppose that
an indecomposable $(A,B)$-bimodule $M$ is projective as a left $A$- and also as a right $B$-
module, and that $M$ induces a stable equivalence of Morita type between
$A$ and $B$.   
Then, for any simple $A$-module $S$, $S\otimes_AM$ is indecomposable
as a $B$-module.  
\end{Lemma}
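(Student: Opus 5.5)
This is the classical result of Linckelmann that a stable equivalence of Morita type sends simple modules to indecomposable modules, provided $M$ is indecomposable and the semisimple quotients are separable. The plan is to show that $\End_B(S\otimes_AM)$ is local.

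First we fix the setup. Write $N:=M^*=\Hom_B(M,B)$, which is projective on both sides. By assumption
$$M\otimes_BN\cong A\oplus U\quad\text{and}\quad N\otimes_AM\cong B\oplus V,$$
where $U$ and $V$ are projective bimodules. Since $A/\rad(A)$ is separable, the bimodule $U$ is a direct sum of summands of $A\otimes_kA$ (in the sense of \cite[1.16]{Lin18}). Hence for every $A$-module $X$ the module $X\otimes_AU$ is projective, and similarly on the $B$ side. The functor $-\otimes_AM$ is exact, so it induces a functor $\mathsf{stmod}A\to\mathsf{stmod}B$. This functor is an equivalence with quasi-inverse $-\otimes_BN$.

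Next we reduce to showing that $S\otimes_AM$ has no nonzero projective summand. Suppose this holds. The stable equivalence gives
$$\underline{\End}_B(S\otimes_AM)\cong\underline{\End}_A(S)=\End_A(S)=k,$$
using that $S$ is simple and non-projective, so no nonzero endomorphism of $S$ factors through a projective. If $S$ were projective, $S\otimes_AM$ would be projective, and we would handle that separately below. Now a module with no projective summand and with local stable endomorphism ring is indecomposable: any decomposition into two nonzero non-projective summands would give two orthogonal nonzero idempotents in the stable endomorphism ring.

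The main obstacle is excluding projective summands of $S\otimes_AM$, and this is where the indecomposability of $M$ enters. We would argue with Linckelmann's trick. Suppose $Q$ is a projective indecomposable $B$-module with $Q\mid S\otimes_AM$. Then $\Hom_B(S\otimes_AM,T)\neq0$, where $T=\soc Q$ is simple. By adjunction this gives $\Hom_A(S,T\otimes_BN)\neq0$.

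We would then use the fact that $\Hom_B(Q,S\otimes_AM)\cong\Hom_A(Q\otimes_BN,S)$. Tracing the split inclusion through the adjunction shows that $S$ is a summand of the top of $T\otimes_BN$ in a way that forces the bimodule map $M\to M$ to factor through a projective bimodule, namely through $P(S)^*\otimes_k Q$.

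Since $M$ is indecomposable and not projective, $\End_{A\otimes B^{op}}(M)$ is local, so every endomorphism of $M$ that factors through a projective bimodule lies in its radical. On the other hand, the splitting of $Q$ off $S\otimes_AM$ would yield an idempotent endomorphism of $M$ of this projective-factoring form which acts nontrivially on $S\otimes_AM$. This is a contradiction.

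Concretely, we would first try the following. Take the composition
$$M\to P(S)^*\otimes_k Q\to M$$
built from a splitting and its adjoints, and show that its image under $S\otimes_A-$ is the projection onto $Q$. A non-nilpotent endomorphism in the radical is impossible, so no such $Q$ exists. The case where $S$ itself is projective cannot occur for a block that is not semisimple. In general that case is excluded by the same argument, since then $S\otimes_AM$ is projective and nonzero, which again yields such an endomorphism of $M$.
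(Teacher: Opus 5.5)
\textbf{This is a genuine gap, though your overall strategy is the right one.} The reduction is sound, and it is the standard architecture behind Linckelmann's result that the paper merely cites. Since $\underline{\End}_B(S\otimes_AM)\cong\End_A(S)=k$ for non-projective $S$, it suffices to show that $S\otimes_AM$ has no nonzero projective summand. But that step carries all the content, and you never prove it.

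Fix $\pi\colon S\otimes_AM\to Q$ and $\sigma\colon Q\to S\otimes_AM$ with $\pi\sigma=\mathrm{id}_Q$ and $Q=fB$. The problems with your sketch of this step are:
\begin{itemize}
\item The paragraph about $\soc Q$, the top of $T\otimes_BN$ and ``tracing the split inclusion through the adjunction'' never actually defines an endomorphism of $M$.
\item The claim that the splitting yields an \emph{idempotent} endomorphism of $M$ lying in the radical cannot be right: such an idempotent is $0$.
\item What you need is an endomorphism $\phi$ of $M$, factoring through a projective bimodule, such that $S\otimes_A\phi$ is a nonzero multiple of the idempotent $\sigma\pi$ of $S\otimes_AM$. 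Your text only says one ``would try'' to build it.
\end{itemize}

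\textbf{The missing ingredients} are the flatness of ${}_AM$ and the injectivity of $Q$ (self-injectivity of $B$).
\begin{itemize}
\item Because $M$ is projective as a left $A$-module, the socle embedding $S\hookrightarrow I(S)$ stays a monomorphism $S\otimes_AM\hookrightarrow I(S)\otimes_AM$.
\item Because $Q$ is injective, $\pi$ extends to a map $I(S)\otimes_AM\to Q$. By adjunction this is a bimodule map $\psi\colon M\to \Hom_k(I(S),Q)\cong I(S)^*\otimes_kQ=:W$.
\item Here $I(S)^*\cong Ae$ with $eA=P(S)$, so $W\cong Ae\otimes_kfB$ is a projective bimodule.
\item For the map back, lift $\sigma(f)\in (S\otimes_AM)f=eMf/e\,\rad(A)Mf$ to some $m\in eMf$, and set $\mu(ae\otimes fb)=amb$.
\item Since $\End_A(S)=k$, we have $S\otimes_A W\cong Q$. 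Under this identification $S\otimes_A\psi=\pi$ and $S\otimes_A\mu=c\,\sigma$ with $c\in k^\times$.
\item Thus $\phi:=\mu\psi$ lies in $\rad\bigl(\End_{A\otimes_kB^{\mathrm{op}}}(M)\bigr)$, because $M$ is indecomposable and not projective. Hence $\phi$ is nilpotent. But $S\otimes_A\phi=c\,\sigma\pi$ is not nilpotent, a contradiction.
\end{itemize}
This step never uses the stable equivalence; that hypothesis enters only through the stable endomorphism ring.

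Two smaller points. First, the correct bimodule factor is $I(S)^*$, not your $P(S)^*$. The two agree only when $\soc P(S)\cong S$, for example for symmetric algebras, which covers the blocks in the paper but not the lemma as stated for self-injective algebras. Second, your last sentence is wrong. If $S$ is projective, the same argument forces $S\otimes_AM=0$ rather than giving a contradiction. So the statement genuinely needs $S$ non-projective, i.e.\ no semisimple blocks. This is harmless for the principal blocks treated in the paper.
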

\begin{proof} Follows by \cite[Proposition 4.14.9]{Lin18}.%[Theorem 2.1(ii)]{Lin96a}.
\end{proof}

\begin{Lemma}\label{GreenHeart}
Suppose that $G\geq P$ such that $P$ is a $p$-subgroup, and let $H\leq G$ with $H\geq N_G(P)$.
Then, we can define the Green correspondence 
$f:=f_{(G,P,H)}$ with respect to $(G,P,H)$ (see \cite[Chap.4 (4.2) and Theorem 4.3]{NT88}).
Set $ \mathfrak A:=\mathfrak A(G,P,H)$ as in \cite[Chap.4 (4.1)]{NT88}. 
Further assume that $Q\leq P$.
We use the notation $\Omega^G_Q(M)$ for $M\in\mod-kG$ to emphasize that 
the relatively $Q$-projective Heller functor
$\Omega^G_Q$ is taken for a $kG$-module.
Assume that $X\in\mod-kG$ is indecomposable with vertex in $\mathfrak A$.
\begin{enumerate}
\renewcommand{\labelenumi}{\rm{(\roman{enumi})}}
\item
Then $\Omega^G_Q(X)$ is still an indecomposable $kG$-module and 
this has the same vertex as $X$.
\item
$\Omega^H_Q(fX)\cong f(\Omega^G_Q(X))$ in $\mod-kH$.
\end{enumerate}
\end{Lemma}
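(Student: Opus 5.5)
The plan is to work with the relative Heller operator through the short exact sequence
\[0\to \Omega^G_Q(X)\to P^G_Q(X)\to X\to 0,\]
where $P^G_Q(X)$ is the relatively $Q$-projective cover. Concretely, $P^G_Q(X)$ is a direct summand of $X{\downarrow}_Q{\uparrow}^G$, chosen minimal so that the sequence is $Q$-split and $\Omega^G_Q(X)$ has no nonzero relatively $Q$-projective summand. Relative $Q$-projectivity gives the standard facts that $\Omega^G_Q$ is well defined up to isomorphism. It also induces an equivalence on the relatively $Q$-stable category, i.e.\ modulo maps factoring through relatively $Q$-projective modules, whose inverse is $\Omega^{-1}_Q$ built from relative injective hulls; this uses that $kG$ is symmetric, so relative projectives and injectives coincide.

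For (i), the first step is to note that $X$ is not relatively $Q$-projective, since otherwise $\Omega^G_Q(X)=0$. Indeed, a vertex in $\mathfrak A$ is, by the definition in \cite[Chap.4 (4.1)]{NT88}, a subgroup $R\le P$ not contained in any $P\cap P^x$ with $x\in G\setminus H$, and we may assume $R\not\le_G Q$. Next, the equivalence of the relative stable category shows that the relatively $Q$-stable endomorphism ring of $\Omega^G_Q(X)$ is isomorphic to that of $X$. That ring is local, because $X$ is indecomposable and not $Q$-projective. Since $\Omega^G_Q(X)$ has no $Q$-projective summands, it is therefore indecomposable.

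To compare vertices in (i), I will use that relatively $R$-projective modules are closed under $\Omega_Q$ when $Q\le R$ or more generally: $P^G_Q(X)$ is $Q$-projective, and the sequence splits on restriction to $Q$. One inclusion comes from writing $X$ as a summand of $Y{\uparrow}^G$ with $Y$ a $kR$-module. Then $\Omega^G_Q(X)$ is a summand of $\Omega^G_Q(Y{\uparrow}^G)\oplus(\text{$Q$-proj})$, and by Mackey and the exactness of induction this is $R'$-projective for $R'$ the vertex, or a subgroup of $Q$. The same argument with $\Omega^{-1}_Q$ gives the reverse inclusion. Because $\Omega^G_Q(X)$ is not $Q$-projective, the vertex cannot drop into $Q$, so the vertices agree up to conjugacy.

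For (ii), restrict the defining sequence to $H$:
\[0\to \Omega^G_Q(X){\downarrow}_H\to P^G_Q(X){\downarrow}_H\to X{\downarrow}_H\to 0.\]
By Green correspondence \cite[Theorem 4.3]{NT88}, $X{\downarrow}_H=fX\oplus Y_X$ with $Y_X$ having vertices in $\mathfrak Y$, and similarly for $\Omega^G_Q(X)$, which is legitimate by (i). The middle term is $Q$-projective, and the sequence stays $Q$-split. Taking the relative $Q$-projective presentation of $fX\oplus Y_X$ over $H$ then gives
\[\Omega^G_Q(X){\downarrow}_H\oplus(\text{$Q$-proj})\cong \Omega^H_Q(fX)\oplus\Omega^H_Q(Y_X)\oplus(\text{$Q$-proj}),\]
by relative Schanuel. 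Now $\Omega^H_Q(fX)$ is indecomposable with vertex in $\mathfrak A$, by (i) applied over $H$. Every summand of $\Omega^H_Q(Y_X)$ is $\mathfrak Y$-projective, by the same vertex argument. Krull--Schmidt then identifies $f(\Omega^G_Q X)$, the unique summand with vertex in $\mathfrak A$, with $\Omega^H_Q(fX)$.

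The main obstacle is the vertex bookkeeping: showing that $\Omega_Q$ preserves $\mathfrak Y$-projectivity, and that the $Q$-projective summands cannot have vertex in $\mathfrak A$. This requires $Q$ itself, if $Q\in\mathfrak A$, not to interfere. I would handle it by noting that $Q$-projective modules have no summand isomorphic to a non-$Q$-projective module, so these summands are simply cancelled.
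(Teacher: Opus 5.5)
\textbf{There is a genuine gap in (ii), at the step where you assert that the middle term of the restricted sequence is relatively $Q$-projective.}

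Your route for (ii) is the paper's route. You restrict $0\to\Omega^G_Q(X)\to P^G_Q(X)\to X\to 0$ to $H$, compare with $\Omega^H_Q(X{\downarrow}_H)=\Omega^H_Q(fX)\oplus\Omega^H_Q(Y_X)$, and pick out the summand with vertex in $\mathfrak A$. The failing step is ``the middle term is $Q$-projective''.

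As a $kH$-module, $P^G_Q(X){\downarrow}_H$ is, by Mackey, only projective relative to the family $\{H\cap{}^gQ\mid g\in G\}$. For $g\notin H$ the group $H\cap{}^gQ$ need not be $H$-conjugate into $Q$. So the restricted sequence is still $Q$-split, but its middle term is not relatively $Q$-injective, and relative Schanuel does not apply. Concretely, the non-$Q$-projective summands of $P^G_Q(X){\downarrow}_H$ can be glued onto $\Omega^H_Q(fX)$ inside the kernel.

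The paper's proof makes exactly the same claim: it writes $P_Q(X){\downarrow}_H=P_Q(X{\downarrow}_H)\oplus Y$ with $Y$ relatively $Q$-projective. So you inherit its gap rather than close it. The gap is fatal: (ii) is false without further hypotheses.

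\textbf{Counterexample.} Take $G=\mathfrak S_4$, $p=2$ and $H=P=\langle(1234),(13)\rangle\cong D_8$, which is self-normalizing. Take $X=k_G$, so $fX=k_P$, and $Q=\langle(12)(34)\rangle$. Then $Z:=Z(P)=\langle(13)(24)\rangle$ is $G$-conjugate but not $P$-conjugate to $Q$.
\begin{enumerate}
\item[(a)] $k_Q{\uparrow}^G$ is indecomposable, so $P^G_Q(k_G)=k_Q{\uparrow}^G$. Indeed, Green correspondence for $(G,Q,N_G(Q))$, where $Q\cap Q^x=1$ for $x\notin N_G(Q)$, makes it indecomposable plus projectives. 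Its restriction to $P$ has no free summand, so there are no projectives.
\item[(b)] Mackey gives $P^G_Q(k_G){\downarrow}_P\cong k_Z{\uparrow}^P\oplus k_Q{\uparrow}^P\oplus k_Q{\uparrow}^P$, with the augmentation on each summand.
\item[(c)] Splitting off one copy of $k_Q{\uparrow}^P$ gives $\Omega^G_Q(k_G){\downarrow}_P\cong K\oplus k_Q{\uparrow}^P$, where $K=\mathrm{Ker}(k_Z{\uparrow}^P\oplus k_Q{\uparrow}^P\to k)$.
\item[(d)] $Q$ acts freely on $P/Z$ and $Z$ acts freely on $P/Q$. Hence this map is the minimal relative projective cover of $k_P$ for the family $\{Q,Z\}$, and $K=\Omega_{\{Q,Z\}}(k_P)$.
\item[(e)] $K$ is indecomposable by the same stable-category argument you use in (i), and it has dimension $7$.
\end{enumerate}
But $\Omega^P_Q(fX)=\Omega^P_Q(k_P)=\mathrm{rad}(k_Q{\uparrow}^P)$ has dimension $3$. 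So it is not even a direct summand of $\Omega^G_Q(k_G){\downarrow}_P$, let alone $f(\Omega^G_Q(k_G))=K$.

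\textbf{Repair.} Your argument goes through verbatim if you add the hypothesis $H\cap{}^gQ\le_H Q$ for all $g\in G$. That is precisely what makes $P^G_Q(X){\downarrow}_H$ relatively $Q$-projective over $kH$.

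\textbf{A smaller point on (i).} ``Vertex in $\mathfrak A$'' does not give $R\not\le_G Q$. For $Q=P$ one always has $\Omega^G_Q(X)=0$. So ``we may assume $R\not\le_G Q$'' is a hypothesis the statement tacitly needs, not something you can derive from it. With that hypothesis, your argument for (i) is sound. For the vertex comparison, the $kR$-presentation you induce must be split on every $R\cap{}^gQ$, $g\in G$. This is the Mackey point you allude to there, and it is exactly the point you overlook in (ii).
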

\begin{proof}
(i) Similar to the proof of \cite[Chap.4 Theorem 4.10(i)]{NT88}.

(ii) Though the proof of (ii) is also similar to that of \cite[Chap.4 Theorem 4.10(ii)]{NT88},
we describe it in detail here.
First, $P_Q(X){\downarrow}_H=P_Q(X{\downarrow}_H)\oplus Y$
for a $kH$-module $Y$ that is relatively $Q$-projective.
Then it is easily seen that 
\begin{equation}\label{Omega_Q}
\Omega^G_Q(X){\downarrow}_H=\Omega^H_Q(X{\downarrow}_H)\oplus Y.
\end{equation}
By the definition of the Green correspondence,
$fX\,|\,X{\downarrow}_H$, so that $\Omega^H_Q(fX)\,|\,\Omega^H_Q(X{\downarrow}_H)$.
Hence (\ref{Omega_Q}) implies that
\begin{equation}\label{GreenOmega1}
\Omega^H_Q(fX)\oplus Y\,\Big|\,\Omega^H_Q(X{\downarrow}_H)\oplus Y=\Omega^G_Q(X){\downarrow}_H.
\end{equation}
On the other hand, since we can define the Green correspondent $f(\Omega^G_Q(X))$ by (i),
\begin{equation}\label{GreenOmega2}
\Omega^G_Q(X){\downarrow}_H=f(\Omega^G_Q(X))\oplus Z\text{ for a }kH\text{-module }Z,
\text{ that is }\mathfrak Y\text{-projective}
\end{equation}
where $\mathfrak Y:=\mathfrak Y(G,Q,H)$ as in \cite[Chap.4 (4.1)]{NT88}.
Thus, (\ref{GreenOmega1}) and (\ref{GreenOmega2}) yield that
$$
f(\Omega^G_Q(X))\oplus Z = \Omega^H_Q(fX)\oplus Y\oplus W
\text{ for a }kH\text{-module }W.
$$
Hence (i) and the definition of the Green correspondence imply the assertion.
\end{proof}

\begin{Lemma}\label{ProjCover}
Let $X$ be any $kG$-module.
\begin{enumerate}
\renewcommand{\labelenumi}{\rm{(\roman{enumi})}}
\item
For a projective $kG$-module $\mathfrak P$, we can consider that
$P(X\oplus\mathfrak P)=P(X)\oplus\mathfrak P$ (genuine equal).
Now, if there is a $kG$-epimorphism $\pi:P(X\oplus\mathfrak P)\twoheadrightarrow X\oplus\mathfrak P$,
then by replacing $P(X)$ and $\mathfrak P$ by suitable $kG$-isomorphisms we may consider that
$\pi=\pi_{\mathfrak P}\oplus id_{\mathfrak P}$ where $\pi_{\mathfrak P}$ is the epimorphism
such that $P(X)\overset{\pi_{\mathfrak P}}{\twoheadrightarrow}X$.
\item
For an injective $kG$-module $\mathfrak Q$, we can consider that
$I(X\oplus\mathfrak Q)=I(X)\oplus\mathfrak Q$ (genuine equal).
Now, if there is a $kG$-monomorphism $\iota:X\oplus\mathfrak Q\rightarrowtail I(X)\oplus\mathfrak Q$,
then by replacing $I(X)$ and $\mathfrak Q$ by suitable $kG$-isomorphisms we may consider that
$\iota=\iota_{\mathfrak Q}\oplus id_{\mathfrak Q}$ where 
$\iota_{\mathfrak Q}$ is the monomorphism
such that $Y\overset{\pi_{\mathfrak P}}{\rightarrowtail}I(Y)$.
\end{enumerate}
\end{Lemma}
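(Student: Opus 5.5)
The plan is to exploit the splitting of projective (resp. injective) modules against their projective covers (resp. injective hulls), via the Krull--Schmidt theorem and the uniqueness of projective covers up to isomorphism. For (i), $P(X)\oplus\mathfrak P$ is a projective cover of $X\oplus\mathfrak P$. This holds because the kernel of $\pi_{\mathfrak P}\oplus id_{\mathfrak P}$ is $\ker(\pi_{\mathfrak P})\oplus 0$, which is superfluous, since $\ker(\pi_{\mathfrak P})\subseteq P(X)\cdot\rad(kG)$. So the identification $P(X\oplus\mathfrak P)=P(X)\oplus\mathfrak P$ is legitimate.

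Now let $\pi$ be a given epimorphism $P(X)\oplus\mathfrak P\twoheadrightarrow X\oplus\mathfrak P$. Since $P(X)\oplus\mathfrak P$ and $X\oplus\mathfrak P$ have the same dimension of top, $\pi$ is itself a projective cover. The uniqueness of projective covers then gives an automorphism $\theta$ of $P(X)\oplus\mathfrak P$ with $\pi\circ\theta=\pi_{\mathfrak P}\oplus id_{\mathfrak P}$. This is exactly the statement that, after replacing the summands $P(X)$ and $\mathfrak P$ by their images under $\theta$, the map $\pi$ takes the split form.

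To make the replacement explicit, I would set $P(X)':=\theta(P(X))$ and $\mathfrak P':=\theta(\mathfrak P)$. Then $\pi$ maps $\mathfrak P'$ isomorphically onto the summand $\mathfrak P$ of the target and maps $P(X)'$ onto $X$ as a projective cover, which is the required decomposition.

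Part (ii) is dual. Since $kG$ is symmetric, injective modules coincide with projective ones. The monomorphism $\iota_{\mathfrak Q}\oplus id_{\mathfrak Q}$ is an essential extension, because its socle part is $\soc(I(X))\oplus\soc(\mathfrak Q)$. Given $\iota$, one compares socle dimensions to see that $\iota$ is an injective hull, and the uniqueness of injective hulls gives $\theta$ with $\theta\circ\iota=\iota_{\mathfrak Q}\oplus id_{\mathfrak Q}$. Alternatively, one can apply (i) to the $k$-dual $\iota^*$.

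The only mildly delicate point is checking that an arbitrary epimorphism $\pi$ (resp. monomorphism $\iota$) of the given shape is minimal. I would handle this by comparing $\dim(\text{top})$ (resp. $\dim(\text{socle})$) of the two sides. An epimorphism from a projective module whose top has the same dimension as the top of the target induces an isomorphism on tops, so its kernel lies in the radical and is superfluous; the socle argument for $\iota$ is symmetric.
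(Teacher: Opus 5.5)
Your proposal is correct and is essentially the routine argument the paper has in mind; the paper's proof only says the statement follows from the definitions of projective covers and injective envelopes. Your top (resp.\ socle) dimension count shows that any such $\pi$ (resp.\ $\iota$) is itself a projective cover (resp.\ injective hull), and uniqueness of these up to an automorphism $\theta$ compatible with the maps gives exactly the required change of the direct-sum decomposition.
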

\begin{proof}
Routine work by the definitions of a projective cover and an injective envelope.
\end{proof}

\pagebreak
\section{Notation used throughout}

\begin{Notation}\label{Notation1}
From now on till the end of this paper we throughout use the following notation.

\begin{table}[h]
\vspace{-1cm}
\caption*{}
\centering
 \begin{tabular}{c |cl}
&&\\
$p$\quad & &$3$
\\
$k$\quad & &an algebraically closed field of characteristic $3$
\\
$G$&\quad&{\sf M}$_{12}$, the Mathieu group of degree $12$
\\
$H$&\quad&$\SL_3(3)=\SL(3,\mathbb F_3)$, the special linear group of size $3$ over 
$\mathbb F_3:={\mathrm{GF}}(3)$,
\\
$P$&&
$
\Big\{ 
\begin{pmatrix} 
1&0&0 \\ \gamma&1&0\\ \alpha&\beta&1
\end{pmatrix}  
\, {\Big|}\,
 \alpha, \beta, \gamma\in\mathbb F_3
\Big\}\lneqq \SL_3(3)=:H$
\\
&&
$P\cong 3_+^{1+2}$ the extra special group of order $27$ of exponent $3$
%\\
%$P$&\quad& a common Sylow $3$-subgroup of $G$ and $H$ by identifying
%\\
%$P\cong 3_+^{1+2}$&\quad& the extra special group of order $27$ of exponent $3$
\\
&&$P\in\Syl_3(G)\cap\Syl_3(H)$ by idetifying
\\
$N_G(P)=N_H(P)$
&& and further note that $N_G(P)=N_H(P)\cong P\rtimes (C_2\times C_2)$ (the Borel subgroup)
\\
$\sigma$&\quad&the outer automorphism of $P$ in $G$ with order $2$, and hence $P^\sigma=P$
\\
$\tau$&\quad&the outer automorphism of $P$ in $H$ with order $2$, and hence $P^\tau=P$,
\\
&\quad&such that $u^\sigma=u^\tau$ for every $u\in P$ 
\\
$Q$&&\quad
$\Big\{ 
\begin{pmatrix} 
1&0&0 \\ 0&1&0 \\ \alpha&\beta&1
\end{pmatrix}  
\, {\Big|}\,
 \alpha, \beta\in\mathbb F_3
\Big\}\lneqq P\lneqq\SL_3(3)=H, \ \ \ Q\cong C_3\times C_3$
\\
\\
$N$&&\quad 
$\Big\{ 
\begin{pmatrix} 
x&{\begin{matrix}0\\0 \end{matrix}}
\\ 
{\begin{matrix}\alpha&\beta\end{matrix}} & \det(x)^{-1}
\end{pmatrix}  
\, {\Big|}\,
x\in\GL_2(3), \alpha, \beta\in\mathbb F_3
\Big\}=N_H(Q) \lneqq \SL_3(3)=H
$
\\
$N_G(Q)=N_H(Q)$&& note that $P\in\Syl_3(N)$ and we can identify $N=N_H(Q)=N_G(Q)$
\\
$N_G(P)\lneqq N_G(Q)$
&&further that $N_G(P)\lneqq N_G(Q)=N=N_H(Q)\gneqq N_H(P)$, $N_G(P)=N_H(P)$
\\
&& and that
$N_G(P)/P\,C_G(P)\cong N_H(P)/P\,C_H(P)\cong C_2\times C_2$
\\
$\GL_2(3)=N_0$&& $\GL_2(3)$ such that 
$N=N_H(Q)=N_G(Q)=Q\rtimes N_0\cong(C_3\times C_3)\rtimes\GL_2(3)$
\\
$C_3\cong P_0$&& a Sylow 3-subgroup of $N_0$ and hence $P_0\cong C_3$
\\
$SD_{16}\cong L_0$&& a Sylow {\sf 2}-subgroup of $N_0\cong\GL_2(3)$, 
and hence $L_0\cong SD_{16}$
\\
$Q\rtimes SD_{16}\cong L$&& $Q\rtimes L_0$ and hence $L\lneqq N$
\\
$N_1$&&\quad
$\Big\{ 
\begin{pmatrix} 
\gamma&0&0 \\ \beta&\delta&0\\ \alpha&0&(\gamma\,\delta)^{-1}
\end{pmatrix}  
\, {\Big|}\,
\gamma,\delta\in{\mathbb F_3}^{\times}, \,\alpha,\beta\in\mathbb F_3
\Big\}
\lneqq N=N_G(Q)=N_H(Q)
$
\\
&&
$N_1\cong (C_3\times C_3)\rtimes (C_2\times C_2)
\cong\mathfrak S_3\times\mathfrak S_3$
\\&&
note that $Q^\sigma=Q^\tau\in\Syl_3(N_1)$ and $Q^\sigma=Q^\tau\,{\not=}\,Q$, in fact
\\
$
Q\,{\not=}\,Q^\sigma=Q^\tau$
&&
$
\Big\{ 
\begin{pmatrix} 
1&0&0 \\ \alpha&1&0 \\ \beta&0&1
\end{pmatrix}  
\, {\Big|}\,
 \alpha, \beta\in\mathbb F_3
\Big\}\ \cong \ C_3\times C_3
$ (compare with $Q\in\SL_3(3)=H$ above)
\\
$w$&&
$
\begin{pmatrix} 
-1&0&0 \\ 0&0&1\\ 0&1&0
\end{pmatrix}  
\in \SL_3(3)=H$
\\
&& note that
$N_1=N^w\cap N=N^w\cap N_G(Q)=N^w\cap N_G(P)$
\\
$N_{1,0}$&& $C_2\times C_2$\ \ 
such that $N_1=Q^\tau\rtimes N_{1,0}\lneqq N$
\\&&
\\
&&
$
\begin{tikzpicture}
\draw
(-1,0)node[above](G){$G$}--(-1,-0.5)node[below](N){$N$}
(1,0)node[above](H){$H$}--(1,-0.5)node[below](Nsigma){$N^\sigma$}
(2.4,-0.75)node[left](){$=\,N^\tau$}
(-1.1,-0.75)node[left](){$N_G(Q)=N_H(Q)=$}
(-1.3,-1.3)node[left](){$N_G(P)=N_H(P)$}
(-1.4,-1.3)--(-1,-1)node[left](){\small 4}
(-1.4,-1.4)--(0.7,-0.7)
(-1.4,-1.45)--(-0.1,-1.5)
(2.1,-0.75)node[right](){$= N_G(Q^\sigma)=N_H(Q^\tau)$}
(-0.8,0)--(0.7,-0.5)
(-0.8,-0.5)--(0.7,0)
(0.9,-0.9)--(0.1,-1.3)node[below](){$P$}
(-0.9,-0.8)--(0,-1.35)
(0,-1.5)--(-1,-1.8)node[below](){$Q$}
%(0.5,-2.1)node[left](){${\not=}_G$}
(0.2,-1.5)--(1,-1.8)node[below](){$Q^\sigma$}
(2.4,-2.1)node[left](){$=\,Q^\tau$}(2,-1.2)node[below](){$N_1$}
(2,-1.6)--(2,-1.9) (2.2,-1.7)node[right](){\small 4}
(1.5,-1.2)node[left](){12}
(-0.9,-0.7)--(1.6,-1.5)
%(0.5,-2.5)node[left](){${\not=}_H$}
;
\end{tikzpicture}
%\end{equation}
$
\\
&&
note that $Q$, $Q^\sigma$ are {\sf not} conjugate in $G$, and that
$Q$, $Q^\tau$ are {\sf not} conjugate in $H$
\end{tabular}
\end{table}

%\pagebreak

\begin{table}[h]
\vspace{-1cm}
\caption*{}
\centering
    \begin{tabular}{c |cl}
%\hline
&&\\
$\mathfrak M$&&
$\Sc(G\times H,\Delta P)$, the (Alperin-)Scott module
over $k(G\times H)$ with respect to $\Delta P$
\\
&&
note that $\mathfrak M$ can be defined since $N\leq G\cap H$ 
\\
&&
and also that 
%\begin{equation}\label{ScottGxH}
$\mathfrak M=\Sc(G\times H,\Delta N)
=\Sc(G\times H,\Delta(N^\sigma))$
%\end{equation}
\\&&
see \cite[Chap.4 Corollary 8.5]{NT88}, and
notice that $\Delta P\in\Syl_3(\Delta N)\cap\Syl_3(\Delta(N^\sigma))$
\\
$A$&&$B_0(kG)$, the principal block (ideal) of $kG$
\\
$B$&&$B_0(kH)$, the principal block (ideal) of $kH$
\\
$\Irr_k(A)$&&
$\{ k:=k_G, 10, 10', 15:=15_{kG}, 15^*:={15^*}_{kG}, 34, 45, 45' \}$
\\
$\Irr_k(B)$&&
$\{ k:=k_H, 3, 3^*, 6, 6^*, 7, 15:=15_{kH}, 15^*:={15^*}_{kH}\}$
\\
$\Irr_k(kN)$&&
$\{ k:=k_N, 1:=1_{kN}, 2, 2^*, 3_1, 3_2\}$
\\
&&
(note that $1_{kN}$ is a non-trivial $kN$-module of $k$-dimension one
\\
&& see
\cite[p.7 and (7.4)(ii)]{KW99a} and \cite[p.1218]{Kos87})
\\
      \end{tabular}
\end{table}
\noindent
Note that the automorphisms $\sigma$ and $\tau$ of $G$ and $H$, respectively, interchange
certain pairs of simple $A$-modules and $B$-modules above such that
\begin{equation}\label{sigma-tau}
10^\sigma=10', \ 15^\sigma=15^*, \ 45^\sigma=45' \text{ \quad \ and \quad \ }
3^\tau=3^*, \ 6^\tau=6^*, \ 15^\tau=15^*.
\end{equation}
\end{Notation}

From now on till the end of this paper we shall use the notation defined in
Notation~\ref{Notation1}.

\begin{Lemma}\label{vertex}
%It holds the following:
\begin{enumerate}
\renewcommand{\labelenumi}{\rm{(\roman{enumi})}}
\item The simple $kG$-modules in $A$ but $45$ and $45'$ have $P$
as their vertices.
\item
The simple $kG$-modules $45$ and $45'$ in $A$, respectively, have $Q^\sigma$ 
and $Q$ as their vertices.
(Note that $Q$ and $Q^\sigma$ are not conjugate in $G$ and neither in $H$.)
%\item
%The simple $kH$-modules $6$ and $6*$ in $B$, respectively, 
%have $Q^\sigma$ and $Q$ as their vertices.
\end{enumerate}
\end{Lemma}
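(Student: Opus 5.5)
Both parts will be read off from the dimensions of the simple modules and from the structure of $N=N_G(Q)$, using standard facts: for an indecomposable $kG$-module $X$ with vertex $V$ and $V\leq P\in\Syl_3(G)$, the $3$-part of $|P:V|$ divides $\dim_k X$ (Green's indecomposability theorem); the trivial module has vertex $P$; and a module with vertex $V$ has a source that is a summand of $X{\downarrow}_V$.

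For (i), the dimensions of the simple $A$-modules other than $45,45'$ are $1,10,10,15,15,34$, none divisible by $3$. Since $|P:V|$ must divide the dimension and is a power of $3$, we get $|P:V|=1$, so $V=P$ for each of $k,10,10',15,15^*,34$.

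For (ii), $\dim 45=45=3^2\cdot 5$, so the vertex $V$ satisfies $|P:V|\leq 9$. I first rule out $V=P$, using the Green correspondence $f=f_{(G,P,N_G(P))}$ together with the known Loewy structures of the projective indecomposable $A$-modules, e.g. from \cite{KW99a}. Every simple $A$-module with vertex $P$ has as its Green correspondent an indecomposable $kN_G(P)$-module with vertex $P$. The principal block of $kN_G(P)$, with $N_G(P)/P\cong C_2\times C_2$, has exactly four simple modules, all one-dimensional, and their Green correspondents are the simple $A$-modules lying in the ``Brauer tree-like'' part. The cleaner route is to show directly that $45$ is projective relative to a proper subgroup. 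I would use the known fact that $45$ and $45'$ lie in Brauer characters of defect-zero type on $Q$. Concretely, I would check that $45{\downarrow}_N$ has a direct summand that is projective relative to $Q$. I would compute $45{\downarrow}_N$ from the decomposition matrix of $A$ and the character table of $N\cong(C_3\times C_3)\rtimes\GL_2(3)$. Since $N$ has the normal subgroup $Q$ and $N/Q\cong \GL_2(3)$ has Sylow $3$-subgroup $P_0\cong C_3$, the candidate is the inflation-induced module $3_i{\uparrow}$ or the projective $\GL_2(3)$-module of dimension $3$ inflated to $N$. This module has vertex $Q$. Then Green correspondence for $(G,Q,N)$, valid because $N=N_G(Q)$, identifies $45'$ (respectively $45$) as the correspondent with vertex $Q$. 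Since $45^\sigma=45'$ by (\ref{sigma-tau}), the vertex of $45$ is $Q^\sigma$ once that of $45'$ is $Q$. The non-conjugacy of $Q$ and $Q^\sigma$ is recorded in Notation~\ref{Notation1}.

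The main obstacle is pinning down that the vertex is exactly $Q$ and not $P$ or a subgroup of order $3$. Order $3$ is excluded because $|P:V|=9$ would force $\dim\equiv 0 \pmod 9$, which holds for $45$, so this case needs an argument. I would exclude it using the Brauer character of $45$ on $3$-singular classes: the vertex contains a conjugate of every element $u$ for which $X{\downarrow}_{\langle u\rangle}$ has a non-projective summand. I would compute this via the restriction to elementary abelian subgroups of rank $2$ and use that $Q$ contains representatives of two $G$-classes of elements of order $3$. To exclude $P$, I would exhibit $45$ as a summand of $k_Q{\uparrow}^G$, or of a module induced from $N$ with $Q$-vertex, by comparing characters. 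Where hand computation becomes heavy, I would cite the explicit Green correspondences in \cite{Kos87, KW99a}, which list $f(45')$ and $f(45)$ as the $kN$-modules with vertices $Q$ and $Q^\sigma$.
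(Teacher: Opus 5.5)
\textbf{Part (i) has a gap for $15$ and $15^*$.} Using Green's divisibility theorem ($|P:V|$ divides $\dim_k X$ for an indecomposable $X$ with vertex $V\le P$) is a nice self-contained replacement for the paper's citation of \cite[Theorem (i)]{KW99b}. However, it only works for $k$, $10$, $10'$ and $34$. Your claim that none of $1,10,10,15,15,34$ is divisible by $3$ is false: $15=3\cdot 5$. So for $15$ and $15^*$ you only get $|P:V|\le 3$, and a vertex of order $9$ (for instance $Q$ or $Q^\sigma$) is not excluded. This is not a mere slip. Part (ii) shows that vertices of order $9$ really occur among the simple $A$-modules, so no dimension count or general principle will settle it. You need module-specific information on $15$. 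The paper takes this, like the whole lemma, from \cite{KW99b}; compare the use of \cite[(6.2)]{KW99b} for $15{\downarrow}^G_N$ in Lemma~\ref{F(k)F(15)F(15*)}.

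\textbf{Part (ii) contains no working argument; the tools you name fail.}

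\emph{Summand of the restriction.} Knowing that $45{\downarrow}_N$ has a relatively $Q$-projective summand bounds nothing, since any projective summand qualifies. An upper bound needs $45$ itself to be a summand of a module induced from $Q$.

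\emph{Element-wise criterion.} This criterion only gives lower bounds. Brauer characters are not defined on $3$-singular elements. Also, $45$ and $45'$ do not lift to characteristic $0$: the only ordinary character of ${\sf M}_{12}$ of degree $45$ is not in $A$. More seriously, the criterion cannot separate $Q$ from a cyclic vertex. The group $N_0\cong\GL_2(3)$ acts on $Q\cong\mathbb F_3^2$ as on its natural module, so every non-identity element of $Q$ is conjugate to a generator of $Z(P)\le Q$. This contradicts your claim that $Q$ meets two $G$-classes. Consequently a module with vertex $Q$ and one with vertex $Z(P)$ are non-free on exactly the same cyclic subgroups.

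\emph{Trivial source candidates.} $45'$ cannot be a summand of $k_Q{\uparrow}^G$. Nor can it be the Green correspondent of $3_i$ or of an inflated projective $k\GL_2(3)$-module. All of these are trivial source modules (Green correspondence preserves sources), hence liftable, whereas $45'$ is not liftable. In fact, by \cite[Theorem (ii)]{KW99b}, $45'{\downarrow}^G_N=f(45')\oplus P(3_2)$, where $f(45')$ has dimension $18$ and head $2^*$ (see Lemma~\ref{Res-Ind_45-45'}(i),(xii)).

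So your (ii) ultimately rests on the citation in your last sentence. That is what the paper does, but the correct source is \cite[Theorem (ii)]{KW99b}, not \cite{Kos87} or \cite{KW99a}.

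\textbf{A route that works with tools already in the paper.}
\begin{enumerate}
\item By \cite[(3.20)(i)]{KW99a}, $45'\mid 24{\uparrow}^G_{{\sf M}_{11}}$, as in the proof of Lemma~\ref{Res-Ind_45-45'}(ii). Since $Q\in\Syl_3({\sf M}_{11})$, $45'$ is relatively $Q$-projective.
\item Green's divisibility theorem gives $|V|\ge 3$.
\item Erdmann's theorem \cite{Erd77} says a simple module with cyclic vertex has that vertex as a defect group of its block. Since $P$ is not cyclic, $|V|=3$ is excluded.
\end{enumerate}
Hence $V=_GQ$, and applying $\sigma$ gives vertex $Q^\sigma$ for $45=(45')^\sigma$.
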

\begin{proof}
(i) and (ii) follow from \cite[Theorem (i), (ii)]{KW99b}.
%
%(ii)
\end{proof}

%%%%%\newpage
\begin{Lemma}\label{FusionSystem}
%It holds the following:
\begin{enumerate}
\renewcommand{\labelenumi}{\rm{(\roman{enumi})}}
\item $\F_P(G)=\F_P(H)\cong\F_{\Delta P}(G\times H)$.
\item
The fusion system
$\mathcal F_{\Delta P}(G\times H)$ is saturated.
\end{enumerate}
\end{Lemma}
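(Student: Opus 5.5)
The plan is to verify part (i) via Alperin's fusion theorem and then deduce (ii) from the fact that fusion systems of finite groups on Sylow subgroups are saturated.

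For (i), $\F_P(G)$ is generated by $\Aut_G(R)$ for $R$ ranging over the $\F$-essential (centric, radical) subgroups of $P$, together with $\Aut_G(P)$. For $P\cong 3_+^{1+2}$ the candidates besides $P$ are the four maximal elementary abelian subgroups $C_3\times C_3$. By the structure recorded in Notation~\ref{Notation1}, exactly two of them, $Q$ and $Q^\sigma=Q^\tau$, are radical both in $G$ and in $H$. Their automizers are $\Aut_G(Q)=N_G(Q)/C_G(Q)\cong \GL_2(3)$, and likewise for $Q^\sigma$. The identifications $N_G(Q)=N_H(Q)=N$, $N_G(Q^\sigma)=N_H(Q^\tau)=N^\sigma$ and $N_G(P)=N_H(P)$ are inside a common subgroup, with $\sigma$ and $\tau$ agreeing on $P$. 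So the generating automizers coincide as subgroups of the automorphism groups of $Q$, $Q^\sigma$ and $P$. Alperin's theorem then gives $\F_P(G)=\F_P(H)$.

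I would justify that the other two $C_3\times C_3$ subgroups are not essential by checking that their normalizers in $G$ and $H$ lie in $N_G(P)=N_H(P)$. Equivalently, one can rely on the known classification of Ruiz and Viruel \cite{RV04}: both $\mathsf M_{12}$ and $\SL_3(3)$ realize the same fusion system on $3_+^{1+2}$.

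For the isomorphism with $\F_{\Delta P}(G\times H)$, I take $\varphi\in\Hom_{\F_{\Delta P}(G\times H)}(\Delta R,\Delta S)$ induced by $(g,h)$. Then $(r,r)^{(g,h)}=(r^g,r^h)\in\Delta S$, so $c_g=c_h$ on $R$. Conversely, since $\F_P(G)=\F_P(H)$, any $c_g\colon R\to S$ equals some $c_h$, and $(g,h)$ realizes the diagonal map. Hence the isomorphism $P\to\Delta P$, $u\mapsto(u,u)$, induces an isomorphism of fusion systems.

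For (ii), $\Delta P$ is not a Sylow subgroup of $G\times H$, so saturation is not automatic from the group. Instead I transport structure: by (i), $\F_{\Delta P}(G\times H)\cong\F_P(G)$, and $\F_P(G)$ is saturated because $P\in\Syl_3(G)$ (see \cite[8.1]{Lin18}). Saturation is invariant under isomorphism of fusion systems.

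The main obstacle is the radical-subgroup check in (i), namely confirming that no further essential subgroups arise in $G$. For this I would cite the tables in \cite{RV04}, or the structure of $3$-local subgroups of $\mathsf M_{12}$, whose maximal $3$-locals are $N$, $N^\sigma$ and $N_G(P)$.
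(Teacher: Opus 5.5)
Your proposal is correct and is essentially the paper's proof: the paper also takes $\F_P(G)=\F_P(H)$ from \cite{RV04}, obtains the isomorphism with $\F_{\Delta P}(G\times H)$ from the correspondence $c_g=c_h\mapsto c_{(g,h)}$, and transports saturation from the Sylow fusion system $\F_P(G)$; your Alperin-theorem sketch is a sound optional supplement, and it is simpler than it looks, since $\Aut_G(Q)=\Aut(Q)\cong\GL_2(3)$ (and likewise for $Q^\sigma$) forces the automizers of the two essential subgroups to agree. One small slip is in your last sentence: $N_G(P)$ lies inside $N$ and is not a maximal $3$-local subgroup of $\mathsf M_{12}$, and the missing maximal $3$-local is $\mathfrak A_4\times\mathfrak S_3$, the normalizer of a non-central subgroup of order $3$ as in Lemma~\ref{BrauerIndecForC3}(ii); this does not affect your essential-subgroup check, because subgroups of order $3$ are not centric.
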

\begin{proof}
(i) The first equality follows from \cite{RV04} (see also 
\cite[Table 3 on p.2043 and (8) in the table on p.2050]{NU09}).
Then it is easy to see from (i) that
the correspondence 
\begin{align*}
&\Hom_{\F_P(G)}(R,S)=\Hom_{\F_P(H)}(R,S)\rightarrow
  \Hom_{\F_{\Delta P}(G\times H)}(\Delta R,\Delta S),
\\
& \qquad\qquad\qquad\qquad c_g=c_h\mapsto c_{(g,h)}
\quad \text{ for }R,S\leq P\text{ and }g\in G, h\in H
\end{align*}
induces the second isomorphism.

(ii) Since $P\in\Syl_3(G)$, $\F_P(G)$ is saturated. So the assertion follows from (i).
\end{proof}

\begin{Notation}\label{notation33}
Because of the previous lemma,
from now on we can set {\color{black}$\F:=\F_P(G)=\F_P(H)\cong\F_{\Delta P}(G\times H)$}.
\end{Notation}

\begin{Lemma}\label{BrauerIndecForP}
$\mathfrak M(\Delta P){\downarrow}^{N_{G\times H}(\Delta P)}_{C_{G\times H}(\Delta P)}$
is indecomposable.
\end{Lemma}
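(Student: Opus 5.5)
We compute the Brauer construction of the Scott module directly. The known result (Ishioka--Kunugi, Kessar--Kunugi--Mitsuhashi) is that for a saturated fusion system $\F_{\Delta P}(G\times H)$, the Brauer construction $\Sc(G\times H,\Delta P)(\Delta P)$ restricted to $C_{G\times H}(\Delta P)$ is indecomposable. Since Lemma~\ref{FusionSystem}(ii) gives saturation, the lemma would follow by quoting that theorem. Still, I will give a direct argument tailored to our situation.

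First, by Lemma~\ref{FusionSystem}(i) and the standard description of Scott modules, $\mathfrak M(\Delta P)$ is isomorphic to the Scott module $\Sc(N_{G\times H}(\Delta P),\Delta P)$. Indeed, it is the Green/Brauer correspondent: Brauer quotients of Scott modules at the vertex are Scott modules for the normalizer, as in Broué's theorem on $p$-permutation modules. So it is the projective cover of the trivial module for $N_{G\times H}(\Delta P)/\Delta P$.

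Next, we identify the groups. We have $C_{G\times H}(\Delta P)=Z(P)C_G(P)\times Z(P)C_H(P)$. Here $C_G(P)=Z(P)$ and $C_H(P)=Z(P)$, since $P C_G(P)$ has quotient $C_2\times C_2$ in $N_G(P)$ of order $108=27\cdot 4$. Hence $C_{G\times H}(\Delta P)=Z(P)\times Z(P)$, a $3$-group. Also
\[
N_{G\times H}(\Delta P)=\{(g,h)\in N_G(P)\times N_H(P)\mid c_g=c_h \text{ on } P\},
\]
and this contains $\Delta N_G(P)$ together with $C_{G\times H}(\Delta P)$.

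Then $\mathfrak M(\Delta P)$ is a direct summand of $k{\uparrow}_{\Delta P}^{N_{G\times H}(\Delta P)}$ on which $\Delta P$ acts trivially, so it is a module for the quotient $\bar N:=N_{G\times H}(\Delta P)/\Delta P$. This quotient has normal subgroup $\bar C:=C_{G\times H}(\Delta P)\Delta P/\Delta P\cong C_3$ (namely $Z(P)\times Z(P)$ modulo the diagonal) of $3'$-index $4$. The Scott module of $\bar N$ is the projective cover of $k_{\bar N}$, and its restriction to the normal $3$-subgroup $\bar C$ is the projective cover of $k$ over $k\bar C$, which is the uniserial module $k\bar C$ (one needs $\dim=3$; dimensions of projective covers of the trivial module agree since the index is prime to $3$). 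This is indecomposable. Restriction to $C_{G\times H}(\Delta P)$ is inflation of this from $\bar C$, so it remains indecomposable.

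The main obstacle is the first step: making sure the Brauer construction of $\mathfrak M$ at $\Delta P$ is indeed the Scott module of the normalizer rather than a larger summand. We plan to cite the standard fact (\cite[Chap.4]{NT88} or \cite{Lin18}) that for a Scott module with vertex $\Delta P$, $\mathfrak M(\Delta P)$ is the Scott module $\Sc(N_{G\times H}(\Delta P),\Delta P)$.
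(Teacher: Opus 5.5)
Your proposal is correct. Its opening observation is exactly the paper's proof: saturation of $\F_{\Delta P}(G\times H)$ from Lemma~\ref{FusionSystem}(ii), combined with \cite[Lemma 4.3(ii)]{KKM11}.

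Your direct argument is a valid hands-on specialization of that cited lemma rather than a genuinely new route. It runs as follows:
\begin{itemize}
\item You use Brou\'e's identification $\mathfrak M(\Delta P)\cong\Sc(N_{G\times H}(\Delta P),\Delta P)$. This holds for any Scott module at its vertex, so Lemma~\ref{FusionSystem}(i) is not needed at that step, and the ``main obstacle'' you flag is a standard fact.
\item You compute $C_{G\times H}(\Delta P)=Z(P)\times Z(P)$.
\item You note that its image $\bar C\cong C_3$ in $\bar N$ has $3'$-index. Hence the projective cover of $k_{\bar N}$ restricts to the indecomposable module $k\bar C$.
\end{itemize}

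The one step you assert without justification is $|\bar N:\bar C|=4$. This is where the equality of fusion systems actually enters. Since $\Aut_G(P)=\Aut_H(P)$, one gets $N_{G\times H}(\Delta P)=\Delta N_G(P)\cdot(Z(P)\times Z(P))$, of order $108\cdot 9/3=324$. So $|\bar N|=12$, and
\[
|\bar N:\bar C|=|\Aut_\F(P):\mathrm{Inn}(P)|=36/9=4 .
\]
This $3'$-index condition is precisely the saturation axiom at $\Delta P$ that \cite[Lemma 4.3(ii)]{KKM11} uses. Your computation makes the citation transparent and self-contained at the cost of a few lines. The paper's citation is shorter and avoids identifying the normalizer explicitly.
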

\begin{proof}
Follows from Lemma~\ref{FusionSystem}(ii) and \cite[Lemma 4.3(ii)]{KKM11}.
\end{proof}

\begin{Lemma}\label{BrauerIndecFor<1>}
$\mathfrak M(\Delta\langle 1\rangle){\downarrow}
^{N_{G\times H}(\Delta\langle 1\rangle)}_{C_{G\times H}(\Delta\langle 1\rangle)}$
is indecomposable.
\end{Lemma}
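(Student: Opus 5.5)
Throughout, write $R:=\langle 1\rangle$ for the subgroup of $P$ meant in the statement, and $\Delta R\le\Delta P$ for its diagonal copy. I will follow the same route as for Lemma~\ref{BrauerIndecForP}, using the Brauer indecomposability machinery of \cite{KKM11} with the saturated fusion system $\F\cong\F_{\Delta P}(G\times H)$ of Lemma~\ref{FusionSystem}. The one difference is that $\Delta R$ is not the full defect group, so the centralizer $C_{G\times H}(\Delta R)$ has to be analysed directly.

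\emph{Step 1: replace $R$ by a fully $\F$-normalized conjugate.} Choose an $\F$-conjugate of $R$ that is fully $\F$-normalized. Brauer quotients transform compatibly under $(G\times H)$-conjugation, and the fusion systems of $G$ and $H$ agree on $P$ by Lemma~\ref{FusionSystem}(i). Hence replacing $R$ by this conjugate changes nothing in the statement.

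\emph{Step 2: identify $\mathfrak M(\Delta R)$ as a Scott module.} By \cite[Lemma 4.3]{KKM11} (the part preceding (ii), applied to the saturated system $\F$), $\mathfrak M(\Delta R)$ restricted to $N_{G\times H}(\Delta R)$ should be isomorphic to the Scott module
\[
\Sc\bigl(N_{G\times H}(\Delta R),\,N_{\Delta P}(\Delta R)\bigr).
\]

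\emph{Step 3: compute the centralizer.} We have
\[
C_{G\times H}(\Delta R)=C_G(R)\times C_H(R).
\]
I will compute $C_G(R)$ and $C_H(R)$ explicitly inside ${\sf M}_{12}$ and $\SL_3(3)$, using the matrix description of $P$, $Q$ and $N$ in Notation~\ref{Notation1}. The aim is to show that both centralizers have $C_P(R)$ as Sylow $3$-subgroup and satisfy the hypothesis of the indecomposability criterion in \cite{KKM11}. Concretely, I want either that they are $3$-nilpotent modulo $R$, or that they induce the same fusion on $C_P(R)$, so that $\F_{C_P(R)}(C_G(R))=\F_{C_P(R)}(C_H(R))$. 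The second version would be a consequence of $\F_P(G)=\F_P(H)$ together with $C_G(R)$ and $C_H(R)$ being controlled by $N=N_G(Q)=N_H(Q)$, or by $N^\sigma$.

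\emph{Step 4: conclude.} With Step 3 in hand, the criterion of \cite{KKM11} applies to the subgroup $N_{\Delta P}(\Delta R)C_{G\times H}(\Delta R)$ of $N_{G\times H}(\Delta R)$. It gives that the Scott module of Step 2, restricted to $C_G(R)\times C_H(R)$, is the indecomposable Scott module $\Sc\bigl(C_G(R)\times C_H(R),\,\Delta C_P(R)\bigr)$. This is exactly the assertion.

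The main obstacle is Step 3: pinning down $C_G(R)$ in ${\sf M}_{12}$ and verifying that its $3$-local structure matches that of $C_H(R)$ in $\SL_3(3)$. I would first try to show that $C_G(R)$ and $C_H(R)$ both lie in $N$ or in $N^\sigma$, which are common to $G$ and $H$. If that works, the two centralizers literally coincide and the criterion applies immediately.
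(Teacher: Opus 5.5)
You never use the fact that $\langle 1\rangle$ is the \emph{trivial} subgroup. As a result, your Step~3, which you single out as the main obstacle, aims at statements that are false. For $R=\langle 1\rangle$ you have $C_G(R)=G={\sf M}_{12}$ and $C_H(R)=H=\SL_3(3)$. Neither lies in $N$ or $N^\sigma$ (note $|N|=432$), so the two centralizers cannot ``literally coincide'' inside $N$. Also, ${\sf M}_{12}$ is a nonabelian simple group of order divisible by $3$, so it is not $3$-nilpotent, modulo $R=1$ or otherwise. Both of the ways you propose to finish Step~3 therefore break down. The surviving alternative, equal fusion on $C_P(R)=P$, is just Lemma~\ref{FusionSystem}(i), and you never say which criterion turns it into indecomposability. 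Step~1 is vacuous as well, since the trivial subgroup has no other conjugates.

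The missing observation is that no centralizer analysis or fusion-theoretic criterion is needed. At the trivial subgroup the Brauer construction is the identity: $\mathfrak M(\Delta\langle 1\rangle)=\mathfrak M^{\Delta\langle 1\rangle}/\sum_{S<\Delta\langle 1\rangle}\mathrm{Tr}_S^{\Delta\langle 1\rangle}(\mathfrak M^S)=\mathfrak M$, because the sum over proper subgroups is empty. Moreover $N_{G\times H}(\Delta\langle 1\rangle)=C_{G\times H}(\Delta\langle 1\rangle)=G\times H$, so the restriction in the statement is the identity. The module in question is therefore $\mathfrak M=\Sc(G\times H,\Delta P)$ itself, which is indecomposable by the definition of a Scott module. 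This is exactly the paper's one-line proof. Your own Steps~2 and~4, once specialized to $R=\langle 1\rangle$, collapse to this, and Step~3 plays no role.
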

\begin{proof}
Easy since $\mathfrak M(\langle 1\rangle)\cong \mathfrak M$ as
$(kG,kH)$-bimodules. 
\end{proof}

\begin{Lemma}\label{BrauerIndecForC3}
Suppose that $R\leq P$ with $|R|=3$.
\begin{enumerate}
\renewcommand{\labelenumi}{\rm{(\roman{enumi})}}
\item
Assume that $R=Z(P)$.
    \begin{enumerate}
    \item
It holds that    
$N_G(R)\cong P\rtimes (C_2\times C_2)\cong N_H(R)$ and 
$C_G(R)\cong P\rtimes C_2\cong C_H(R)\cong C_N(R)$,
and hence we can identify as $N_G(R)=N_H(R)$ and 
$C_G(R)=C_H(R)=C_N(R)$
since $N_G(R)=N_G(P)\lneqq N_G(Q)$ and $N_H(R)=N_H(P)\lneqq N_H(Q)$.
(Recall that we identify $N_G(Q)$ and $N_H(Q)$ in Notation~\ref{Notation1}.).
    \item
The subgroup $R=Z(P)$ is a fully $\mathcal F$-centralized subgroup of $P$. 
    \item
$\mathfrak M(\Delta R){\downarrow}^{N_{G\times H}(\Delta R)}_{C_{G\times H}(\Delta R)}$
is indecomposable.   
     \end{enumerate}
\item
Assume that $R\,{\not=}\,Z(P)$ and that
$R$ is a fully $\F$-normalized subgroup of $P$.
     \begin{enumerate}
     \item 
The subgroup $R$ is a fully $\mathcal F$-centralized subgroup of $P$.  
     \item     
$N_G(R)\cong \mathfrak A_4\times\mathfrak S_3\gneqq C_3\times\mathfrak S_3\cong N_H(R)$
and 
$$
\qquad \qquad \quad C_G(R)\cong C_3\times\mathfrak A_4\gneqq C_3\times C_3\cong Z(P)\times R = C_H(R)=C_N(R).
$$ 
    \item
$\mathfrak M(\Delta R){\downarrow}^{N_{G\times H}(\Delta R)}_{C_{G\times H}(\Delta R)}$
is indecomposable.       
     \end{enumerate}
\end{enumerate}
\end{Lemma}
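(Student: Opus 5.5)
We treat the two cases separately. In each, the group-theoretic parts (a), (b) come from the subgroup structure of ${\sf M}_{12}$ and $\SL_3(3)$ together with Lemma~\ref{FusionSystem}, and part (c) then follows from the criterion used for Lemma~\ref{BrauerIndecForP}. That criterion is \cite[Lemma 4.3]{KKM11}: for a fully $\F$-centralized (or fully normalized) subgroup $R$ of $P$, with $\F_{\Delta P}(G\times H)$ saturated, the Brauer construction $\mathfrak M(\Delta R)$ restricts indecomposably from $N_{G\times H}(\Delta R)$ to $C_{G\times H}(\Delta R)$. So the main work is to verify the hypotheses of that lemma.

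\textbf{Case $R=Z(P)$.} For (a), $N_H(Z(P))$ is computed directly in $\SL_3(3)$. The centre of the lower unitriangular group is the set of matrices with only the $(3,1)$-entry $\alpha$ nonzero. A matrix normalizing this subgroup must stabilize the flag $\langle e_3\rangle\subset\langle e_2,e_3\rangle$ (acting on the appropriate side), so it lies in the Borel subgroup $N_H(P)\cong P\rtimes(C_2\times C_2)$. The diagonal torus acts on $\alpha$ by $\gamma(\gamma\delta)^{-1}\cdot(\ldots)$. Inspecting this action, exactly a subgroup $C_2$ of $C_2\times C_2$ centralizes $\alpha$, which gives $C_H(R)\cong P\rtimes C_2$. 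For $G$ we use that $N_G(P)=N_H(P)$ and that $Z(P)$ is characteristic in $P$. Hence $N_G(R)\geq N_G(P)$. Equality holds because a $3$-central element of order $3$ in ${\sf M}_{12}$ (class $3A$) has centralizer of order $54$ (ATLAS), so $C_G(R)=C_N(R)$. Since $C_G(R)\leq N_G(P)=N_H(P)\leq N$, all the identifications in (a) follow. For (b), $Z(P)$ is normal in $P$, so $C_P(R)=P$ is as large as possible, and $R$ is therefore fully centralized. Part (c) then follows from (b), Lemma~\ref{FusionSystem}(ii) and \cite[Lemma 4.3(ii)]{KKM11}.

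\textbf{Case $R\neq Z(P)$ with $R$ fully normalized.} For (a), $C_P(R)=Z(P)\times R$ has order $9$ for every noncentral $R$ of order $3$, since $P$ is extraspecial. Every conjugate of $R$ in $P$ is again noncentral, so every $\F$-conjugate has centralizer in $P$ of order $9$, and $R$ is automatically fully centralized. For (b), the fusion system $\F$ identifies such an $R$ with a representative of the non-$3$-central class ($3B$ in ${\sf M}_{12}$). Its centralizer in ${\sf M}_{12}$ has order $36$, and from the ATLAS it is $C_3\times\mathfrak A_4$ with normalizer $\mathfrak A_4\times\mathfrak S_3$. In $\SL_3(3)$, a transvection-type element lying in $Q$ has centralizer containing a unipotent radical, so we should choose the fully normalized representative carefully inside $P$, for example a suitable element of $Q^\tau\setminus Z(P)$ or of $Q$. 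A direct matrix computation with the explicit groups in Notation~\ref{Notation1} should then give $C_H(R)=Z(P)\times R=C_N(R)$ and $N_H(R)\cong C_3\times\mathfrak S_3$. Part (c) again follows from \cite[Lemma 4.3(ii)]{KKM11}, applied to the fully centralized $\Delta R$ in the saturated $\F_{\Delta P}(G\times H)$.

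The main obstacle is (ii)(b). We must choose the correct fully normalized representative $R$ so that its centralizers in $G$ and $H$ are as stated, and reconcile this choice with the identification $P\subset N\le G\cap H$. To do this we first count $|N_P(R)|$ over the $\F$-classes of noncentral subgroups of order $3$, using the explicit matrices. We then read off $|C_G(R)|=36$ from the ATLAS class $3B$ of ${\sf M}_{12}$, and check that $R$ lies in a Sylow $3$-subgroup of some $\mathfrak A_4\times\mathfrak A_4$-type subgroup.
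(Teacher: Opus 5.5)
\textbf{Main gap: parts (i)(c) and (ii)(c).} You credit \cite[Lemma 4.3]{KKM11} with more than it says. Part (ii) of that lemma concerns the vertex itself, and that is exactly how the paper uses it in Lemma~\ref{BrauerIndecForP} for $\Delta P$. Your version says that saturation alone makes $\mathfrak M(\Delta R){\downarrow}_{C_{G\times H}(\Delta R)}$ indecomposable for every fully centralized $\Delta R$. Every subgroup of $\Delta P$ is $\F$-conjugate to a fully centralized one, and the property is invariant under $G\times H$-conjugation. So your version would make every Scott module with saturated fusion system Brauer indecomposable, and Lemma~\ref{ScottBrauerIndec} would follow from Lemma~\ref{FusionSystem}(ii) alone. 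That converse is proved in \cite{KKM11} only for abelian vertices, and $\Delta P\cong 3^{1+2}_+$ is not abelian. So (c) is unproved in both cases.

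Parts (a) and (b) are not just hypothesis checks; they feed an explicit computation. The paper uses $\mathfrak M\,|\,A\otimes_{kN}B$. Since $\Delta R$ is fully centralized and $A$, $B$ are almost source algebras of themselves, \cite[Theorem 9.4.7]{Lin18} and Brauer's third main theorem give
$\mathfrak M(\Delta R)\,|\,B_0(kC_G(R))\otimes_{kC_N(R)}B_0(kC_H(R))$.
In case (i), $C_G(R)=C_H(R)=C_N(R)$, so this collapses to the indecomposable bimodule $B_0(kC_G(R))$. In case (ii), $B_0(kC_G(R))\cong k[C_G(R)/O_{3'}(C_G(R))]\cong k(C_3\times C_3)\cong kC_N(R)=kC_H(R)$, with the same conclusion. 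You need this argument, or an equivalent direct determination of $\mathfrak M(\Delta R)$.

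\textbf{Errors in (ii)(a) and (ii)(b).} You claim every $\F$-conjugate of a noncentral $R$ is noncentral. This is false. $N_0\cong\GL_2(3)$ is transitive on $Q\setminus\{1\}$, so every noncentral $R\le Q$ is $\F$-conjugate to $Z(P)$; the same holds for $Q^\tau$. Such an $R$ has $|C_P(R)|=9<27=|C_P(Z(P))|$, so it is not fully centralized, and ``automatically fully centralized'' fails.

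You never use the fully normalized hypothesis here, and it is exactly what is needed. If $S=_G R$ had $|C_P(S)|=27$, then $S=Z(P)$ and $|N_P(S)|=27>9=|N_P(R)|$, a contradiction.

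The same observation settles (ii)(b). The hypothesis forces $R\not\le Q\cup Q^\tau$, so $R$ is generated by a regular unipotent, non-$3$-central element. No choice of representative is involved. Your suggested representatives in $Q^\tau\setminus Z(P)$ or in $Q$ are transvections, whose centralizer in $H$ has order $54$, not $Z(P)\times R$.
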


\begin{proof}
(i)-(a) follows by elementary calculations.

(i)-(b) 
Take any subgroup $S\leq P$ with $S=_G R$. 
Then, since $R=Z(P)$, $|C_P(R)|=|P|\geq |C_P(S)|$, that yields that
$R$ is a fully $\F$-centralized subgroup of $P$.

(i)-(c)
First, note that $\Delta R$ is a fully $\F$-centralized subgroup of $\Delta P$
by (b) and Lemma~\ref{FusionSystem}(i). 
Note that \cite[the paragraph after Definition 6.4.3]{Lin18} and Brauer's 3rd Main Theorem
imply that
\begin{equation}\label{almostSource}
A\text{ and }B\text{ are almost source algebras of themselves.}
\end{equation}
Now, set $A_R:=B_0(k\,C_G(R))$, and then (a) yields that
$A_R=B_0(k\,C_H(R))=B_0(k\,C_N(R))$.
Further, 
since $\mathfrak M\,|\,kG\otimes_{B_0(kN)}kH$, it holds that $\mathfrak M\,|\,A\otimes_{kN}B$.
Thus,
\begin{align*}
\mathfrak M(\Delta R)&\,|\,(A\otimes _{kN}B)(\Delta R)
\\&
=A(\Delta R)\otimes_{(kN)(\Delta R)}B(\Delta R)
\quad\text{ by (b), (\ref{almostSource}) and \cite[Theorem 9.4.7]{Lin18} }
\\&
=B_0(k\,C_G(R))\otimes_{k\,C_N(R)}B_0(k\,C_H(R)) 
\\&
\text{ by Brauer's 3rd Main Theorem and \cite[Remark 5.4.13]{Lin18}}
\\&
=A_R\otimes_{A_R}A_R
\\&
=
{_{k\,C_G(R)}}{(A_R)}_{k\,C_H(R)}
\end{align*}
Notice that $A_R$ is an indecomposable $(k\,C_G(R), k\,C_G(R))$-bimodule,
and since we can consider $A_R=B_R$, $A_R$ is considered as an
indecomposable $(A_R,B_R)$-bimodule, so that
$$\mathfrak M(\Delta R){\downarrow}^{N_{G\times H}(\Delta R)}_{C_G(R)\times C_H(R)}
\cong A_R\text{\qquad as }(A_R,B_R)\text{-bimodule}
$$
since $C_{G\times H}(\Delta R)=C_G(R)\times C_H(R)$.
Namely, $\mathfrak M(\Delta R){\downarrow}^{N_{G\times H}(\Delta R)}_{C_{G\times H}(\Delta R)}$ 
is indecomposable.

\medskip
(ii)-(a):
Suppose that $S\leq P$ and $S=_GR$. Obviously
$N_P(R)=C_P(R)=R\times Z(P)\cong C_3\times C_3$, and hence $|C_P(R)|=9$.
If $|C_P(S)|\,{\not\leq}\,9$, then $|C_P(S)|=27$, which implies that $S\leq Z(P)$,
and hence $|N_P(S)|=|P|=27>9=|N_P(R)|$, that means that $R$ is not fully $\F$-normalized in $P$,
a contradiction.

%%\medskip
(ii)-(b): 
By elementary calculations, we know the structures of $N_G(R), N_H(R), C_G(R), C_H(R)$ 
and $C_N(R)$ (cf.\cite[pp.13 and 33]{Atlas}).
Recall that we already can consider that $N\leq G\cap H$. So, the assertion follows.

%%\medskip
(ii)-(c): It holds that
$B_0(k\,C_G(R))\cong k\,C_G(R)/O_{3'}(C_G(R))\cong k(C_3\times C_3)
\cong k\,C_H(R)=k\,C_N(R)$.  
Therefore, just as in the proof of (i)-(c) above, the assertion follows.
\end{proof}

\begin{Lemma}\label{BrauerIndecForC3xC3}
Suppose that $R\leq P$ with $|R|=9$.
\begin{enumerate}
\renewcommand{\labelenumi}{\rm{(\roman{enumi})}}
\item
One of the following cases happens:
    \begin{enumerate}
    \item
$N_G(R)\cong N_H(R)\cong R\rtimes\GL_2(3)$
and $C_G(R)=C_H(R)=C_N(R)=R$.
    \item
$N_G(R)\cong N_H(R)\cong  R\rtimes\mathfrak S_3\cong P\rtimes C_2$, and 
$C_G(R)=C_H(R)=C_N(R)=R$.
    \end{enumerate}
\item
The subgroup $R$ is a fully $\F$-centralized subgroup of $P$.
\item
$\mathfrak M(\Delta R){\downarrow}^{N_{G\times H}(\Delta R)}_{C_{G\times H}(\Delta R)}$
is indecomposable.
\end{enumerate}
\end{Lemma}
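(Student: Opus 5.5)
Here $R$ is a subgroup of order $9$ of $P\cong 3^{1+2}_+$, and I would prove the lemma along the same lines as Lemma~\ref{BrauerIndecForC3}. Every such $R$ is elementary abelian, normal in $P$, and contains $Z(P)$. There are exactly four of them, corresponding to the four subgroups of order $3$ of $P/Z(P)\cong C_3\times C_3$. Two of them are $Q$ and $Q^\sigma=Q^\tau$. The other two are the remaining subgroups of order $9$ in $P$.

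For (i) I would compute normalizers. For $R=Q$, Notation~\ref{Notation1} gives $N_G(Q)=N_H(Q)=N\cong Q\rtimes\GL_2(3)$. Since $\GL_2(3)$ acts faithfully on $Q$, this gives $C_G(Q)=C_H(Q)=C_N(Q)=Q$. The same holds for $Q^\sigma$, because $N_G(Q^\sigma)=N^\sigma=N^\tau=N_H(Q^\tau)$.

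For the other two subgroups $R$, I would note that $P\le N_G(R)$. Every element of order $3$ of $P$ outside $R$ acts on $R$ as a transvection, so $C_G(R)\cap P=R$. Next I would use the Atlas data for ${\sf M}_{12}$ and $\SL_3(3)$ (cf.\ \cite{Atlas}) to show that $R$ is not normalized by a subgroup of order $16$, and that $N_G(R)\cap N_G(P)=P\rtimes C_2$. This yields $N_G(R)=P\rtimes C_2\cong R\rtimes\mathfrak S_3$, and likewise in $H$. Here $C_2$ is the element of $N_G(P)$ that stabilizes $R$: the group $C_2\times C_2$ permutes the four subgroups of order $9$, fixing $Q$ and $Q^\sigma$ and swapping the other two. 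Any $3'$-element centralizing $R$ would also centralize $Z(P)$, so it would lie in $C_G(Z(P))=P\rtimes C_2$ by Lemma~\ref{BrauerIndecForC3}(i)(a). One checks that this involution inverts $R/Z(P)$, so it does not centralize $R$. Hence $C_G(R)=R$, and similarly $C_H(R)=R$. I expect this step to be the main obstacle, since it needs explicit knowledge of the $3$-local structure of both groups.

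For (ii) the argument is immediate. Every $S\le P$ with $S=_GR$ has order $9$, so $S$ contains $Z(P)$ and $C_P(S)\ge S$. If $C_P(S)=P$, then $S\le Z(P)$, which is impossible. Hence $|C_P(S)|=9=|C_P(R)|$ for all such $S$, and $R$ is fully $\F$-centralized.

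For (iii) I would copy the proof of Lemma~\ref{BrauerIndecForC3}(i)(c). By (ii) and Lemma~\ref{FusionSystem}(i), $\Delta R$ is fully $\F$-centralized. Using (\ref{almostSource}), the fact that $\mathfrak M\,|\,A\otimes_{kN}B$, and \cite[Theorem 9.4.7]{Lin18}, we get
\[
\mathfrak M(\Delta R)\,\big|\,A(\Delta R)\otimes_{kN(\Delta R)}B(\Delta R)=B_0(k\,C_G(R))\otimes_{k\,C_N(R)}B_0(k\,C_H(R)).
\]
By (i) all three centralizers equal $R$, so the right-hand side is $kR\otimes_{kR}kR=kR$. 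This is indecomposable as a $k(C_G(R)\times C_H(R))=k(R\times R)$-module, since $kR$ is a local algebra. Therefore $\mathfrak M(\Delta R){\downarrow}_{C_{G\times H}(\Delta R)}$ is indecomposable. It is nonzero, because $\Delta R\le\Delta P$ is contained in a vertex of the Scott module, so the Brauer quotient does not vanish.
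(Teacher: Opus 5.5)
Your overall route matches the paper's. You compute the $3$-local subgroups directly for (i), use that every order-$9$ subgroup of $P$ has centralizer of order exactly $9$ in $P$ for (ii), and repeat the proof of Lemma~\ref{BrauerIndecForC3}(i)(c) with all centralizers equal to $R$ for (iii). Your remark that $\mathfrak M(\Delta R)\neq 0$ makes explicit a point the paper leaves tacit. Parts (ii) and (iii), case (a), and your proof of $C_G(R)=R$ via $C_G(R)\le C_G(Z(P))=P\rtimes C_2$ are all fine.

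The step that fails as written is the computation of $N_G(R)$ for the two non-parabolic $R$. Your two facts are that no subgroup of order $16$ normalizes $R$ and that $N_G(R)\cap N_G(P)=P\rtimes C_2$. These do not force $N_G(R)=P\rtimes C_2$. Since $C_G(R)=R$, the group $N_G(R)/R$ embeds in $\Aut(R)\cong\GL_2(3)$, so $|N_G(R)|$ divides $432$. Sylow counting over $N_G(R)\cap N_G(P)$, which has order $54$, then leaves exactly $|N_G(R)|\in\{54,216\}$. The case $216$ is $N_G(R)/R\cong\SL_2(3)$, and it satisfies both of your facts. Its Sylow $2$-subgroup has order $8$, and $N_{\SL_2(3)}(C_3)\cong C_6$ gives $|N_G(R)\cap N_G(P)|=54$.

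The missing idea is that $N_G(R)$ must normalize $Z(P)$, because $Z(P)$ is the only subgroup of order $3$ in $R$ that is $\F$-conjugate to $Z(P)$. In $H$, the elements of $R\setminus Z(P)$ have both subdiagonal entries nonzero. They are therefore regular unipotent, with centralizer of order $9$, whereas $|C_H(Z(P))|=54$. By Lemma~\ref{FusionSystem}(i) the same non-fusion holds in $G$; there these elements have centralizer $C_3\times\mathfrak A_4$. Hence $N_G(R)\le N_G(Z(P))=N_G(P)$ by Lemma~\ref{BrauerIndecForC3}(i)(a), so $N_G(R)=N_G(R)\cap N_G(P)=P\rtimes C_2$, and likewise in $H$. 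Equivalently, this non-fusion is exactly what rules out $\SL_2(3)\le N_G(R)/R$, since $\SL_2(3)$ is transitive on $R\setminus\{1\}$.

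Finally, $P\rtimes C_2\cong R\rtimes\mathfrak S_3$ needs a complement to $R$. Take an element of $P\setminus R$ inverted by the involution; one exists (replace $x\in P\setminus R$ by a suitable $xz$ with $z\in Z(P)$) because the involution inverts $P/Z(P)$ and centralizes $Z(P)$.
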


\begin{proof}
(i) 
There are precisely four maximal subgroups $R$\,s of $P$ of order $9$. Two of them
are swapped by the outer-automorphism $\sigma\in\Out(G)$ (resp. $\tau\in\Out(H)$) 
if the maximal subgroups $R$\,s are considered as in $G$ (resp. $H$). 
So the assertion follows from elementary calculations.

(ii) Take any subgroup $S\leq P$ with $S=_GR$. Then $S$ is at least a subgroup of $P$
of order $9$. Hence it follows from (i) that $|C_P(S)|=|S|=|R|=|C_P(R)|$. So,
$R$ is a fully $\F$-centralized subgroup of $P$.

(iii)
By (i),
$B_0(k\,C_G(R))\cong B_0(k\,C_H(R))\cong B_0(k\,C_N(R))\cong kR$
as $(kR, kR)$-bimodules. Hence the assertion follows precisely just as in the
proof of Lemma~\ref{BrauerIndecForC3}(i)--(c) by making use of 
Lemma~\ref{FusionSystem}(ii) and (ii).
\end{proof}

%\pagebreak
%%\newpage
\begin{Lemma}\label{ScottBrauerIndec}
The Scott module $\mathfrak M$ is Brauer indecomposable.
\end{Lemma}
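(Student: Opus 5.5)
We will verify Brauer indecomposability directly from the definition: $\mathfrak M$ is Brauer indecomposable exactly when, for every subgroup $\Delta R\leq \Delta P$, the Brauer quotient $\mathfrak M(\Delta R)$ restricted to $C_{G\times H}(\Delta R)$ is indecomposable or zero. The vertex of $\mathfrak M$ is $\Delta P$, so every $p$-subgroup on which the Brauer quotient can be nonzero is $(G\times H)$-conjugate to a subgroup of $\Delta P$. Subgroups of $P\times P$ not of diagonal form contribute nothing, because $\mathfrak M$ is a direct summand of $\mathrm{Ind}_{\Delta P}^{G\times H}k$. So it suffices to treat $\Delta R$ for $R\leq P$. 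By conjugation we may assume $R$ is a fully $\F$-normalized representative of its $\F$-class, using Lemma~\ref{FusionSystem}. Conjugating by an element of $G\times H$ transports both the Brauer quotient and the centralizer, so indecomposability is unaffected.

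Next we split according to $|R|\in\{1,3,9,27\}$ and quote the preceding lemmas.
\begin{itemize}
\item $R=\langle 1\rangle$ is Lemma~\ref{BrauerIndecFor<1>}.
\item $R=P$ is Lemma~\ref{BrauerIndecForP}.
\item $|R|=9$ is Lemma~\ref{BrauerIndecForC3xC3}(iii). All four maximal subgroups are covered there, by the case split (i)(a) and (i)(b).
\item $|R|=3$ with $R=Z(P)$ is Lemma~\ref{BrauerIndecForC3}(i)(c).
\item $|R|=3$ with $R\ne Z(P)$ is Lemma~\ref{BrauerIndecForC3}(ii)(c), applied after first replacing $R$ by a fully $\F$-normalized $\F$-conjugate. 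Such a conjugate exists inside $P$ because $\F$ is saturated.
\end{itemize}

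Alternatively, we would cite the criterion \cite[Theorem 1.3]{KKM11} (or its refinement in \cite[Lemma 4.3]{KKM11}). It states that a Scott module $\Sc(G\times H,\Delta P)$ with saturated fusion system $\F_{\Delta P}(G\times H)$ is Brauer indecomposable provided the restriction to $C_{G\times H}(\Delta R)$ is indecomposable for all fully $\F$-normalized (or fully centralized) $\Delta R$. The lemmas above verify exactly this hypothesis, the centralization property being recorded in Lemma~\ref{BrauerIndecForC3}(i)(b), (ii)(a) and Lemma~\ref{BrauerIndecForC3xC3}(ii).

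The main obstacle is the reduction to fully normalized or fully centralized representatives. We must make sure that every $\F$-class of subgroups of $P$ of each order contains a representative satisfying the hypotheses of the earlier lemmas. We must also check that the Brauer quotient restricted to the centralizer is invariant under the conjugation used. The classes of order $3$ need care: $R\ne Z(P)$ may or may not be fused with other order-$3$ subgroups. Here saturation (Lemma~\ref{FusionSystem}(ii)) supplies a fully normalized conjugate, and conjugation by $(g,h)$ with $c_g=c_h$ on $R$ keeps us inside $\Delta P$.
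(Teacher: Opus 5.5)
Your proof is correct. It uses the same case split and the same local lemmas as the paper. The one difference is how you pass from the subgroups treated in Lemmas~\ref{BrauerIndecForP}--\ref{BrauerIndecForC3xC3} to all $3$-subgroups of $G\times H$.

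The paper does this by citing \cite[Theorem 1.3]{IK17}, which reduces Brauer indecomposability of a Scott module with saturated fusion system to a check at fully normalized subgroups. You do it by hand, in three steps:
\begin{enumerate}
\item A $3$-subgroup with nonzero Brauer quotient is $(G\times H)$-conjugate into $\Delta P$, hence to some $\Delta R$ with $R\le P$.
\item Conjugation transports both $\mathfrak M(\Delta R)$ and $C_{G\times H}(\Delta R)$, so indecomposability is unaffected.
\item Since $\F_P(G)=\F_P(H)$, an $\F$-isomorphism $c_g|_R=c_h|_R$ gives $(\Delta R)^{(g,h)}=\Delta(R^g)$. Hence one representative per $\F$-class suffices.
\end{enumerate}
Because the lemmas compute the Brauer quotients $\mathfrak M(\Delta R)$ themselves, this elementary argument makes the external theorem unnecessary and the proof self-contained. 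The paper's citation buys brevity and fits the standard fully-normalized framework. The criterion in your alternative paragraph is what the paper takes from \cite[Theorem 1.3]{IK17}, not \cite[Theorem 1.3]{KKM11}. The paper uses \cite[Lemma 4.3(ii)]{KKM11} only for $R=P$.

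Three small points in your write-up need fixing.
\begin{itemize}
\item Existence of a fully $\F$-normalized conjugate does not need saturation. Just maximize $|N_P(S)|$ over the finite $\F$-class.
\item Your sentence about subgroups ``not of diagonal form'' should say ``not conjugate into $\Delta P$''. Twisted diagonals $\{(u,u^x)\mid u\in R\}$ with $x\in P$ do have nonzero Brauer quotient.
\item Your order-$3$ bullet needs adjusting. A non-central subgroup of order $3$ can be $\F$-conjugate to $Z(P)$; the non-central transvection subgroups of $P\le\SL_3(3)$ are examples. Its fully normalized conjugate then has normalizer all of $P$, so it is normal of order $3$, hence equal to $Z(P)$. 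For that class you must invoke Lemma~\ref{BrauerIndecForC3}(i)(c), not (ii)(c). With the case split organized over fully normalized representatives ($Z(P)$ versus non-central), the argument is complete.
\end{itemize}
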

\begin{proof}
Set $\mathcal G:=G\times H$ and $\mathcal P:=\Delta P$,
then by Notation~\ref{notation33}, we can set
$\F:=\F_{\mathcal P}(\mathcal G)$.
We claim that for any $\mathcal Q\leq\mathcal P$,
${\mathfrak M(\mathcal Q)}{\downarrow}^{N_{\mathcal G}(\mathcal Q)}_{C_{\mathcal G}(\mathcal Q)}$
is indecomposable.
This follows from 
Lemmas~\ref{BrauerIndecForP}, \ref{BrauerIndecFor<1>}, 
\ref{BrauerIndecForC3}(i)(c), \ref{BrauerIndecForC3}(ii)(c), \ref{BrauerIndecForC3xC3}(iii)
and \cite[Theorem 1.3]{IK17}.
%Since $\mathfrak M(\Delta\langle 1\rangle)=\mathfrak M$ and 
%${C_{\mathcal G}}(\Delta\langle 1\rangle)=\mathcal G$, one can assume that $\mathcal Q\,{\not=}\,1$.
%Further, since $\F$ is saturated by Lemma~\ref{FusionSystem}(ii)
%and since $\mathfrak M=\Sc(\mathcal G, \mathcal P)$, it holds that
%$\mathfrak M(\mathcal P){\downarrow}^{N_{\mathcal G}(\mathcal P)}_{C_{\mathcal G}(\mathcal P)}$
%is indecomposable by \cite[Lemma~4.3(ii)]{KKM11}. Thus we can assume that $|\mathcal Q|=3$ or $9$.
\end{proof}

\begin{Lemma}\label{stableEqA-B}
\begin{enumerate}
\renewcommand{\labelenumi}{\rm{(\roman{enumi})}}
\item
The Scott module $\mathfrak M$ induces a
stable equivalence of Morita type between $A$ and $B$ that is realized by the functor
$F: \underline{\mathrm{mod}}\text{-}A\rightarrow\underline{\mathrm{mod}}\text{-}B$ defined that
for each {\sf indecomposable} $A$-module $X$,  
$$X\mapsto X\otimes_A\mathfrak M :=F(X)\oplus{\mathrm{(proj)}}$$ such that 
$F(X)$ is an {\sf indecomposable} $B$-module.
Namely $F$ realizes a splendid stable equivalence of Morita type between $A$ and $B$
(see \cite[Definition 9.8.1]{Lin18} and note that
$S\otimes_A\mathfrak M$ is already an {\sf indecomposable} $B$-module and hence
$S\otimes_A\mathfrak M=F(S)$ for a simple
$A$-module $S$ by \cite[Proposition 1.16.21]{Lin18} and Lemma~\ref{F(simple)}).
\item
The functor $F$ preserves vertices, and further if $X$ is a $p$-{\sf permutation} $kG$-module in $A$
then $F(X)$ is  a $p$-{\sf permutation} $kH$-module in $B$.
\item
{\color{black}
For an indecomposable $A$-module $X$,
\[ F(X)=
\begin{cases}
{f_{(H,P,N)}}^{-1}\circ f_{(G,P,N)}(X)\quad\text{ if }P\text{ is a vertex of }X, \\
{f_{(H,Q,N)}}^{-1}\circ f_{(G,Q,N)}(X)\quad\text{ if }Q\text{ is a vertex of }X.
\end{cases}
\]
}
\item
The functor $F$ commutes with the outer-automorphisms of $P$ in $G$ and $H$, that is
$$ F(X^\sigma)=F(X)^\tau \ \ \text{ for }X\in\mod-A.$$
\end{enumerate}
\end{Lemma}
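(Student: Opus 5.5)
For (i), I will combine the Brauer indecomposability of $\mathfrak M$ (Lemma~\ref{ScottBrauerIndec}) with the gluing theorem for stable equivalences of Morita type (Linckelmann, in the form of \cite[Theorem 9.7.4]{Lin18} or the version used in \cite{KKM11}). Since $\mathfrak M$ is Brauer indecomposable, for every nontrivial $\mathcal Q=\Delta R\leq\Delta P$ the Brauer quotient $\mathfrak M(\Delta R)$ is an indecomposable $(k\,C_G(R),k\,C_H(R))$-bimodule. From the computations in Lemmas~\ref{BrauerIndecForC3} and \ref{BrauerIndecForC3xC3}, $\mathfrak M(\Delta R)$ is isomorphic to $B_0(k\,C_G(R))\cong B_0(k\,C_H(R))$, the regular bimodule. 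For $R=P$ this holds by \cite[Lemma 4.3]{KKM11}. Hence $\mathfrak M(\Delta R)$ induces a Morita equivalence between $B_0(k\,C_G(R))$ and $B_0(k\,C_H(R))$ for every $1\neq R\leq P$. The fusion systems agree by Lemma~\ref{FusionSystem}, and $\mathfrak M$ is a $p$-permutation bimodule with vertex $\Delta P$ and with $A\otimes$-components in the principal blocks. The gluing theorem therefore says that $A\mathfrak M B$ induces a splendid stable equivalence of Morita type. Write $X\otimes_A\mathfrak M=F(X)\oplus(\mathrm{proj})$. A stable equivalence of Morita type with indecomposable $\mathfrak M$ sends non-projective indecomposables to modules with a unique non-projective indecomposable summand, so $F(X)$ is indecomposable. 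For simple $S$, Lemma~\ref{F(simple)} gives that $S\otimes_A\mathfrak M$ is itself indecomposable, and hence equals $F(S)$. The separability hypothesis holds since $k$ is algebraically closed.

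For (ii), vertices are preserved because $\mathfrak M$ has vertex $\Delta P$ and is a summand of $kG\otimes_{kN}kH$. If $X$ is relatively $R$-projective, then $X\otimes_{kG}kG\otimes_{kN}kH$ is relatively $R$-projective, which gives one inclusion of vertices. Applying the inverse functor $\mathfrak M^*$ gives the reverse inclusion. The $p$-permutation property is immediate: $X\otimes_{kG}\mathfrak M$ is a summand of $(X{\downarrow}_N){\uparrow}^H$, and restriction and induction preserve $p$-permutation modules.

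For (iii), let $X$ have vertex $P$ or $Q$. The key observation is that $N=N_G(Q)\geq N_G(P)$ is a common subgroup, and $F(X)$ is a summand of $(X{\downarrow}_N){\uparrow}^H$. By Green correspondence, $X{\downarrow}_N=f_{(G,V,N)}(X)\oplus(\mathfrak Y\text{-projective})$ for the vertex $V\in\{P,Q\}$. Inducing $f_{(G,V,N)}(X)$ to $H$ gives ${f_{(H,V,N)}}^{-1}f_{(G,V,N)}(X)$ plus modules with vertices in $\mathfrak X(H,V,N)$. The main obstacle is to show that none of the remaining summands can be $F(X)$. By (ii), $F(X)$ is indecomposable with vertex $V$. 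Summands coming from induction of $\mathfrak Y$-projective modules have vertices of the form $V^g\cap N$ with $g\notin N$, or of the form $V\cap V^h$, which are strictly smaller in the relevant sense. Here I would use the normal-subgroup property ($N=N_G(Q)$ for $V=Q$, and $N\geq N_G(P)$ for $V=P$), together with the non-conjugacy of $Q$ and $Q^\sigma$, to rule out a vertex equal to a conjugate of $V$. This forces $F(X)$ to be the Green correspondent.

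For (iv), $\sigma$ and $\tau$ agree on $P$ and both normalize $N$ in the same way. Consequently $\mathfrak M^{(\sigma,\tau)}$ is again the Scott module with respect to $\Delta P$, hence isomorphic to $\mathfrak M$. It follows that $X^\sigma\otimes_A\mathfrak M\cong(X\otimes_A\mathfrak M^{(\sigma^{-1},\tau^{-1})})^\tau\cong(X\otimes_A\mathfrak M)^\tau$, and taking non-projective parts gives $F(X^\sigma)=F(X)^\tau$.
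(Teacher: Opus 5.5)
Your proposal is correct and follows the paper's route. For (i) the paper also observes that each Brauer quotient $\mathfrak M(\Delta R)$, $1\neq R\leq P$, is the regular bimodule $B_0(k\,C_G(R))\cong B_0(k\,C_H(R))$ and then glues via \cite[Theorem 9.8.2]{Lin18}; for (ii)--(iv) it only says these are easy, and your arguments fill them in naturally (vertex comparison through $kG\otimes_{kN}kH$ and $\mathfrak M^*$ using $\mathcal F_P(G)=\mathcal F_P(H)$, Krull--Schmidt against the $\mathfrak X$- and $\mathfrak Y$-projective summands, and invariance of the Scott module under $(\sigma,\tau)$). One small correction in (iv): $\sigma$ and $\tau$ do not normalize $N$ (indeed $N^\sigma=N^\tau=N_G(Q^\sigma)\neq N$), but you do not need this, since $\sigma|_P=\tau|_P$ already gives $(\sigma,\tau)(\Delta P)=\Delta P$ and the Scott module is determined by its vertex.
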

\begin{proof}
(i)
First, $\F_P(G)=\F_P(H)$ by Lemma~\ref{FusionSystem}(i).

Next, we claim that for any non-trivial subgroup $R\leq P$,
$\mathfrak M(\Delta R)$ realizes a Morita equivalence between $B_0(k\,C_G(R))$ and
$B_0(k\,C_H(R))$.
Actually all the cases treated in 
Lemmas~\ref{BrauerIndecForP}, \ref{BrauerIndecForC3} and \ref{BrauerIndecForC3xC3}
have a situation such that
$\mathfrak M(\Delta R)\cong B_0(k\,C_G(R))\cong B_0(k\,C_N(R))\cong B_0(k\,C_H(R))$
as $(B_0(k\,C_G(R)), B_0(k\,C_H(R))$-bimodules. Hence the gluing method
\cite[Theorem 9.8.2]{Lin18} implies the assertion.
(ii)--(iv) are easy by Notation~\ref{Notation1}.
\end{proof}

\begin{Notation}\label{GreenCorr}
We shall use the following notation till the end of this paper;
\begin{align*}
&\mathfrak f:=f_{(G,{\color{black}P},N)}, \ \mathfrak f':=f_{(H,{\color{black}P},N)}, 
\\
&f:=f_{(G,{\color{black}Q},N)}\text{ and }f':=f_{(H,{\color{black}Q},N)}, 
\quad\text{ see \cite[Chap.4 Theorem 4.3]{NT88}).}
\end{align*}
\end{Notation}

\begin{Notation}\label{Notation2}
The picture below may be useful for understanding what's going on
(see Notation~\ref{Notation1}).
\[
\begin{tikzpicture}
\path (0,0)--node[midway,above]{$f\ \  \ \ \ \ \ \mathfrak f$}(-8,-1);
\draw
(-5,0)node[left](){$G:={\sf M}_{12}$}
    [->,thick](-5,0)--(-3,-1)
node[below](){\qquad\qquad\qquad\qquad\qquad\qquad\qquad
         $N:=N_G(Q)=N_H(Q)=Q\rtimes N_0\cong (C_3\times C_3)\rtimes\GL_2(3)$};
\draw (2,0)node[right](){$H:=\SL_3(3)$}[->,thick]--(1,-1)
    %[->,thick]--(8,-1)
%\draw (6,0)node[right](){\f'$}
;
\path (4,0)--node[midway,thick,above]{$\mathfrak f'$ \ \ \ \ \ \ \ $f'$}(-1.3,-1.3)
;
\draw[thick](-1,-1.5)--(-1,-2.5)node[below](){$N_G(P)=N_H(P)\cong P\rtimes (C_2\times C_2)$}
;
\end{tikzpicture}
\]
From now on till the end of paper, we use the notation $F$ in the sense of
the previous lemma.
\end{Notation}

%\pagebreak
\section{PIMs over $A$ and $B$}
\begin{Lemma}\label{P(6)-P(15)} 
%For $15,15^*,6,6^*\in\Irr_k(B)$,
%The following holds;
\begin{enumerate}
\renewcommand{\labelenumi}{\rm{(\roman{enumi})}}
\item
\item[{}]
\begin{tikzpicture}
\draw (-1.5,-2.6)node[left](){$P(15)$=};
\draw 
%(-2,0)node[left](P(15)){$P(15)=$}
(0,0) node[above](15){$15$}--(0,-0.3) node[below](k){$k$}(0,-0.7)--(0,-1)node[below](7){$7$}
%(0,-1.4)--(1,-2,4);                             
(0,-1.4)--(1,-2.4)node[below](3){$3$}
(-0.1,-1.4)--(-0.4,-1.7)node[below left=0.3pt](k){$k$}
(-0.8,-2.2)--(-1.0,-2.4)node[below](15){$15$}
(-0.4,-2.2)--(-0.2, -2.4)node[below](15*){\ $15^*$}
(-0.8,-2.8)--(-0.6,-3.0)node[below](k){\ $k$}
(-0.2,-2.8)--(-0.4,-3.0)
(-0.3,-3.4)--(0,-3.7)node[below](7){\ \ $7$}
(1,-2.9)--(0.2,-3.7)
(0.1,-4.1)--(0.1,-4.4)node[below](k){$k$}
(0.1,-4.8)--(0.1,-5.1)node[below](15){$15$}
;
\end{tikzpicture}
\qquad\qquad
\begin{tikzpicture}
\draw (-1.5,-2.6)node[left](){$P(15^*)$=};
\draw 
%(-2,0)node[left](P(15)){$P(15)=$}
(0,0) node[above](15){$15^*$}--(0,-0.3) node[below](k){$k$}(0,-0.7)--(0,-1)node[below](7){$7$}
%(0,-1.4)--(1,-2,4);                             
(0,-1.4)--(1,-2.4)node[below](3){$3^*$}
(-0.1,-1.4)--(-0.4,-1.7)node[below left=0.3pt](k){$k$}
(-0.8,-2.2)--(-1.0,-2.4)node[below](15){$15^*$}
(-0.4,-2.2)--(-0.2, -2.4)node[below](15*){\ $15$}
(-0.8,-2.8)--(-0.6,-3.0)node[below](k){\ $k$}
(-0.2,-2.8)--(-0.4,-3.0)
(-0.3,-3.4)--(0,-3.7)node[below](7){\ \ $7$}
(1,-2.9)--(0.2,-3.7)
(0.1,-4.1)--(0.1,-4.4)node[below](k){$k$}
(0.1,-4.8)--(0.1,-5.1)node[below](15){$15^*$}
;
\end{tikzpicture}

\item
\item[{}]
%The PIMs $P(6)$ and $P(6^*)$ have the following structures;\\
%%%\medskip
%\qquad
%
%\boxed{
\begin{tikzpicture} 
\draw (-1.7,-1.9)node[left](){$P(6)$=};
\draw
(0,0) node[above](){$6$}--(0.3,-0.3)node[below](3){\quad $3$}
(0.6,-0.7)--(0.9,-1)node[below](7){$7$}
(-0.1,0)--(-1.1,-1)node[below](){\ $6^*$}
(-1.1,-1.4)--(-1.1,-1.7)node[below](){$6$}
(0.4,-0.7)--(-0.9,-2)
(0.9,-1.4)--(0.9,-1.7)node[below](){$k$}
(0.7,-1.3)--(0.2,-1.7)node[below](){$3^*$ \ }
(-1.1,-2.1)--(-1.1,-2.4)node[below](){\ \ $6^*$ \, }
(-0.9,-2.6)--(-0.1,-2)
(0.9,-2.1)--(0.9,-2.4)node[below]{$7$}
(-1.1,-2.8)--(-0.1,-3.9)node[below]{$6$}
(0.7,-2.9)--(0.4,-3.3)node[below]{$3$}
(0.1,-4)--(0.3,-3.7)
(-0.9,-2.1)--(0.2,-3.3)
(-1,-1.3)--(-0.1,-1.9)
(0.2,-2.1)--(0.8,-2.6)
;
\end{tikzpicture}
\qquad\qquad
\begin{tikzpicture}
\draw (-1.7,-1.9)node[left](){$P(6^*)$=};
\draw 
(0,0) node[above](){$6^*$}--(0.3,-0.3)node[below](3){\quad $3^*$}
(0.6,-0.7)--(0.9,-1)node[below](7){$7$}
(-0.1,0)--(-1.1,-1)node[below](){\ $6$}
(-1.1,-1.4)--(-1.1,-1.7)node[below](){\ $6^*$}
(0.4,-0.7)--(-0.9,-2)
(0.9,-1.4)--(0.9,-1.7)node[below](){$k$}
(0.8,-1.3)--(0.2,-1.7)node[below](){$3$ \ }
(-1.1,-2.1)--(-1.1,-2.4)node[below](){\ \ $6$ \, }
(-0.9,-2.6)--(-0.1,-2)
(0.9,-2.1)--(0.9,-2.4)node[below]{$7$}
(-1.1,-2.8)--(-0.1,-3.9)node[below]{$6^*$}
(0.7,-2.9)--(0.4,-3.3)node[below]{$3^*$}
(0.1,-4)--(0.3,-3.7)
(-0.9,-2.1)--(0.2,-3.3)
(-1,-1.3)--(-0.1,-1.9)
(0.2,-2.1)--(0.8,-2.6)
;
\end{tikzpicture}
\end{enumerate}
\end{Lemma}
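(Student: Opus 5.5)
This lemma records the Loewy (radical) structures of the projective indecomposable $B$-modules $P(15)$, $P(15^*)$, $P(6)$ and $P(6^*)$ for $H=\SL_3(3)$ in characteristic $3$. The plan is to take the submodule structure of the PIMs of $B$ from the first author's earlier computations, notably \cite{Kos87} and \cite{KW99a}, and then verify each diagram independently using four ingredients: the decomposition matrix (hence the Cartan matrix) of $B$, the $\Ext^1$-quiver of $B$, self-duality of PIMs of a symmetric algebra, and the outer automorphism $\tau$.

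We first reduce to $P(15)$ and $P(6)$. By (\ref{sigma-tau}), $\tau$ interchanges $3\leftrightarrow 3^*$, $6\leftrightarrow 6^*$ and $15\leftrightarrow 15^*$, and fixes $k$ and $7$. Twisting by $\tau$ therefore carries $P(15)$ to $P(15^*)$ and $P(6)$ to $P(6^*)$, relabelling each Loewy layer accordingly. Applying this relabelling to the diagrams for $P(15)$ and $P(6)$ gives exactly the diagrams for $P(15^*)$ and $P(6^*)$.

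For $P(15)$ and $P(6)$, the steps are as follows.
\begin{enumerate}
\item Compute the composition factors from the Cartan matrix, using the known $3$-modular decomposition matrix of $\SL_3(3)$. For example, $P(15)$ should have factors $3\times 15+4\times k+2\times 7+15^*+3$. Check these against the multiplicities shown in the diagrams.
\item Determine $\Ext^1_B(S,T)$ for all simple $S,T$. This fixes the second Loewy layer $\rad P(S)/\rad^2 P(S)$, for instance $L_2(P(15))=k$ and $L_2(P(6))=3\oplus 6^*$. Since $B$ is symmetric, it also fixes $\soc^2$ by duality.
\item Identify the heart $\rad P(S)/\soc P(S)$ as in the first author's earlier work. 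The tools are the trivial-source and Green-correspondence descriptions: restriction to $N$ or $N_H(P)$, together with Lemma~\ref{vertex}-type vertex information for $6$ and $6^*$ (vertices $Q^\tau$ and $Q$). Use explicit uniserial modules such as $U(15,k,7,\dots)$ and the permutation module on cosets of the parabolic $N$, whose Loewy structure is known.
\end{enumerate}

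The main obstacle is step 3, fixing the middle layers: whether the diagonal edges, for example $7$ to $3$ and $3$ to $7$ in $P(15)$, are genuine. For this I would compute $\Ext^1$ of non-simple uniserials, or use the stable equivalence in Lemma~\ref{stableEqA-B} to transport known structure from $N$. In practice I expect to cite \cite{Kos87, KW99a} for these diagrams, as the paper does elsewhere, with the $\tau$-symmetry argument supplying the starred cases.
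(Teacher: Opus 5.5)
Your proposal ultimately rests on the same thing the paper does: the paper's proof is just a citation of \cite[Theorem 1]{Kos87}, where these Loewy structures of the PIMs of $B$ are determined, and your reduction of the starred cases to $P(15)$ and $P(6)$ by twisting with $\tau$ via (\ref{sigma-tau}) is valid. Note that the citation, not your verification sketch, carries the argument. Cartan invariants, $\Ext^1$ and self-duality do not fix the middle Loewy layers. Lemma~\ref{stableEqA-B} relates $A$ and $B$, not $B$ and $kN$, and a stable equivalence neither sees projective modules nor preserves Loewy structure. Also, \cite{KW99a} concerns ${\sf M}_{12}$, so \cite[Theorem 1]{Kos87} is the reference you want.
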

\begin{proof}
(i) and (ii) follow from \cite[Theorem 1]{Kos87}.
\end{proof}

%%%%\newpage
\section{The images $F(k_G), F(15_{kG}), F({15^*}_{kG})\text{ and }F(34)$}

\begin{Lemma}\label{F(k)F(15)F(15*)}
$F(k_G)=k_H, \  F(15_{kG})=15_{kH}, \  F({15^*}_{kG})={15^*}_{kH}.$
\end{Lemma}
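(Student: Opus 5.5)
We use the description of $F$ from Lemma~\ref{stableEqA-B}. By Lemma~\ref{vertex}(i), $k_G$, $15_{kG}$ and ${15^*}_{kG}$ all have vertex $P$. Part (iii) of that lemma then gives $F(X)={\mathfrak f'}^{-1}\circ\mathfrak f(X)$ for each of them, where $\mathfrak f=f_{(G,P,N)}$ and $\mathfrak f'=f_{(H,P,N)}$. Moreover $F(S)=S\otimes_A\mathfrak M$ is indecomposable for simple $S$. So it is enough to show, for each of the three simple modules $X$, that $\mathfrak f(X)\cong\mathfrak f'(Y)$ for the corresponding simple $kH$-module $Y$.

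\textbf{Trivial module.} Here there is nothing to compute. The Scott module has $k_G\otimes_{kG}\mathfrak M\cong\Sc(H,P)\oplus(\mathrm{proj})$, a standard property of Scott modules with respect to $\Delta P$. Hence $F(k_G)$ is the Scott module $\Sc(H,P)$. Since $P\in\Syl_3(H)$, this is $k_H$.

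\textbf{The modules $15$ and $15^*$.} First, the $15$-dimensional simple $kG$-modules are $p$-permutation modules. I expect to get this from the known structure (e.g.\ \cite{Kos87, KW99a}): they appear as summands of permutation modules, or they are Green correspondents of a $p$-permutation module over $N$. Granting this, Lemma~\ref{stableEqA-B}(ii) shows that $F(15_{kG})$ is an indecomposable $p$-permutation $B$-module with vertex $P$. Its Green correspondent in $N$ equals $\mathfrak f(15_{kG})$.

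Next I identify $\mathfrak f(15_{kG})$ and $\mathfrak f'(15_{kH})$ inside $\mod\text{-}kN$, using the restriction computations in \cite{KW99a} and \cite{Kos87}. Restriction to $N$ has the form $15_{kG}{\downarrow}_N=\mathfrak f(15_{kG})\oplus(\text{modules with vertex in }\mathfrak Y)$. We compare composition factors, taken from $\{k,1,2,2^*,3_1,3_2\}$, and Loewy structures. The aim is to see that both Green correspondents are the same indecomposable $kN$-module, presumably a module with a small number of composition factors. For $15_{kH}$ the structure of $P(15)$ in Lemma~\ref{P(6)-P(15)}(i) is useful. It shows that $15_{kH}$ has nonsplit extensions only with $k$, which helps pin down its Green correspondent.

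An alternative route, if the direct restriction is too messy, is to use Brauer quotients. For a $p$-permutation module $X$ with vertex $P$, the Green correspondent is determined by $X(P)$ as a $k[N_G(P)/P]$-module, and $N_G(P)/P\cong C_2\times C_2$. So I would compute $15_{kG}(P)$ and $15_{kH}(P)$ and check that they give the same one-dimensional character of $C_2\times C_2$. I would choose the labelling so that $15$ goes to $15$ and not to $15^*$. Since $\mathfrak M$ is a $p$-permutation bimodule, $F(X)(P)$ should correspond to $X(P)$ under $\mathfrak M(\Delta P)$, which is the identity on $N_G(P)=N_H(P)$ up to the fusion identification.

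\textbf{Deducing $15^*$.} Once $F(15_{kG})=15_{kH}$ is known, Lemma~\ref{stableEqA-B}(iv) together with (\ref{sigma-tau}) gives
\[
F({15^*}_{kG})=F(15_{kG}^\sigma)=F(15_{kG})^\tau=15_{kH}^\tau={15^*}_{kH}.
\]

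\textbf{Main obstacle.} The hard part is pinning down the Green correspondents, or equivalently the Brauer quotients at $P$, of the $15$-dimensional modules. Two things must be checked: that these modules are really trivial-source, and that the two sides match under the common identification of $N_G(P)$ with $N_H(P)$, rather than being off by the action of $\sigma$ or $\tau$.
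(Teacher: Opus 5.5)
\textbf{Verdict: there is a genuine gap, in the $15$ case.} The rest is fine. Your reduction is the paper's: vertex $P$, Lemma~\ref{stableEqA-B}(iii), and $F(S)$ indecomposable. The Scott-module argument for $k_G$ is a valid alternative to the paper's one-liner ${\mathfrak f'}^{-1}(\mathfrak f(k_G))={\mathfrak f'}^{-1}(k_N)=k_H$. The deduction of the $15^*$ case via $\sigma$ and $\tau$ is exactly the paper's.

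\textbf{The trivial-source claim is false.} You open the $15$ case with it, and your alternative route and your list of things to check depend on it: $15_{kG}$ (and likewise $15_{kH}$) is not a $p$-permutation module. Suppose $X$ is an indecomposable trivial-source module with vertex $P$. Then $X{\downarrow}_P$ is a permutation module, so $\dim_k X\equiv\dim_k X(P)\pmod 3$. Moreover $X(P)$ is an indecomposable projective module for $N_G(P)/P\cong C_2\times C_2$, hence one-dimensional. This forces $\dim_k X\equiv 1\pmod 3$, which fails for $\dim_k 15=15$. Equivalently, its Green correspondent in $N$ would be a summand of some $\lambda{\uparrow}_{N_G(P)}^N$, which is $4$-dimensional. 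So the Brauer-quotient route cannot work, and Lemma~\ref{stableEqA-B}(ii) gives nothing here.

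\textbf{The direct route is right but not carried out.} Comparing Green correspondents in $N$ directly is what works, but you leave it as a plan. Your guess that the correspondent is a module with few composition factors is also off. What is needed is two facts:
\begin{itemize}
\item $15_{kG}{\downarrow}^G_N$ is already indecomposable \cite[(6.2)]{KW99b}, so it equals $\mathfrak f(15_{kG})$.
\item $15_{kG}{\downarrow}^G_N\cong 15_{kH}{\downarrow}^H_N$ \cite[(1.3)Proposition (iv)]{Kos87}. Since this is indecomposable with vertex $P$, it is $\mathfrak f'(15_{kH})$.
\end{itemize}
Then $F(15_{kG})={\mathfrak f'}^{-1}\mathfrak f(15_{kG})=15_{kH}$. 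This isomorphism of restrictions is also what matches $15$ with $15$ rather than $15^*$, so no free choice of labelling is involved. Without these two inputs, or an equivalent explicit computation of the restrictions, your argument does not establish $F(15_{kG})=15_{kH}$.
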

\begin{proof}
First, since $P$ is a vertex of $k_G$, Lemma~\ref{stableEqA-B}(iii) yields that
\[ F(k_G)={\mathfrak f'}^{-1}\circ \mathfrak f(k_G)={\mathfrak f'}^{-1}(k_N)=k_H.
\]
By \cite[Theorem (i)]{KW99b}, $P$ is a vertex of $15_{kG}$.
Thus
\begin{align*}
\mathfrak f(15_{kG})&=15_{kG}{\downarrow}{^G_N}
\text{\ \qquad from \cite[(6.2)]{KW99b} }% and the definition of a Green correspondence} 
\\&\cong{15_{kH}}{\downarrow}^H_N=\mathfrak f'(15_{kH})
\text{\ \qquad by \cite[(1.3)Proposition (iv)]{Kos87}}.
\end{align*}
On the other hand, Lemma~\ref{stableEqA-B}(iii) implies that
${\mathfrak f'}^{-1}\circ{\mathfrak f}(15_{kG})\cong F(15_{kG})$, and hence the second equality holds.

Finally,
\begin{align*}
F(15^*)&= F(15^\sigma) \text{ \ \ by (\ref{sigma-tau})}
\\
&=F(15)^\tau \text{ \ \ by Lemma~\ref{stableEqA-B}(iv)}
\\
&={15_{kH}}^\tau \text{ \ \ by the second equality}
\\
&=
{15_{kH}}^* \text{ \ \ by (\ref{sigma-tau})}.
\end{align*}
\end{proof}

\begin{Lemma}\label{kN-module}
There exists uniquely up to isomorphism the $kN$-module of the form
$
U(2^*,3_2,2).
$
\end{Lemma}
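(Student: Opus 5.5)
The claim has two parts: existence and uniqueness of a uniserial $kN$-module with Loewy layers $2^*,3_2,2$. Both reduce to $\Ext^1$ computations over $kN$, where $N=Q\rtimes N_0$ with $Q\cong C_3\times C_3$ and $N_0\cong\GL_2(3)$. I will use the known structure of the projective indecomposable $kN$-modules from \cite{Kos87} and \cite[(7.4)]{KW99a}. In particular, since $Q=O_3(N)$ acts trivially on every simple $kN$-module, the Loewy structure of $P(S)$ can be read off from $k[Q]\otimes$ (projective $kN_0$-modules). Concretely, $\rad(kQ)/\rad^2(kQ)$ is the natural $2$-dimensional $\GL_2(3)$-module (or its dual, depending on conventions). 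The PIMs of $kN$ are then $P_{kN_0}(S)\otimes_k kQ$ with the tensor-induced Loewy layers.

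\textbf{Existence.} I will exhibit the module as a subquotient of $P(2^*)$, or dually of $I(2)$. The first step is to check that $\Ext^1_{kN}(2^*,3_2)\neq0$ and $\Ext^1_{kN}(3_2,2)\neq0$ by locating $3_2$ in $L_2(P(2^*))$ and $2$ in $L_2(P(3_2))$. Both follow from $\Ext^1_{kN}(S,T)\cong \Ext^1_{kN_0}(S,T)\oplus \Hom_{kN_0}(S\otimes V,T)$, where $V=Q/\Phi(Q)\otimes k$ (dualised as appropriate). For $N_0=\GL_2(3)$ in characteristic $3$, the products satisfy $2^*\otimes V\cong 3_i\oplus 1$ or similar, and these decompositions give the needed nonzero maps.

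Next I need a genuinely uniserial module of length $3$ rather than one where the top $2^*$ and socle $2$ split off. Take $P(2^*)/\rad^3$, and in it the submodule generated in layer $2$ by the copy of $3_2$. I will argue that $3_2\subseteq L_2(P(2^*))$ maps nontrivially into layer $3$ onto a copy of $2$, i.e.\ that the composite of the $\Ext^1$ classes is nonzero in $\rad^2/\rad^3$. This is where the tensor description $P(2^*)\cong P_{N_0}(2^*)\otimes kQ$ is used: multiplying the $V$-layer twice gives $S^2V$-type layers, and $2^*\otimes S^2(V)$ must contain $2$ reached through $3_2$. Quotienting by the other summands of the layers then yields $U(2^*,3_2,2)$.

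\textbf{Uniqueness.} It suffices to show that $\dim\Ext^1_{kN}(2^*,3_2)=\dim\Ext^1_{kN}(3_2,2)=1$, and that $\Hom$ from the radical uniquely determines the extension. More precisely, any such $M$ is a quotient of $P(2^*)$ with $\rad M\cong U(3_2,2)$. Once $\Ext^1(3_2,2)$ is one-dimensional, $U(3_2,2)$ is unique. Then $M$ corresponds to a nonzero element of $\Ext^1(2^*,U(3_2,2))$ which does not come from $\Ext^1(2^*,2)$, using the long exact sequence. If $\Ext^1_{kN}(2^*,2)=0$ (which I expect from the layer computation, since $2\notin L_2(P(2^*))$), then $\Ext^1(2^*,U(3_2,2))\hookrightarrow\Ext^1(2^*,3_2)$, which is one-dimensional, and so $M$ is unique up to isomorphism.

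The main obstacle is the existence step: verifying that the length-$3$ composite is nonsplit, i.e.\ that the relevant Yoneda product in $\Ext^2_{kN}(2^*,2)$ is nonzero. I would settle this either by citing the explicit Loewy series of $P(2^*)$ in \cite[(7.4)]{KW99a}, which should display $2$ below $3_2$, or by computing via the symmetric algebra structure of $kQ$ as above.
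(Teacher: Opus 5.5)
\textbf{There is a genuine gap: the uniqueness step rests on a false claim, and the statement itself fails as written.}

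Your uniqueness argument needs $\Ext^1_{kN}(2^*,2)=0$ (``$2\notin L_2(P(2^*))$''), and this is false. Your own formula shows it. Its first summand is $\Ext^1_{kN_0}(2^*,2)$. Here $2$ and $2^*$ are the two $2$-dimensional simple $k\GL_2(3)$-modules. They form a single block with defect group $C_3$, whose Brauer tree is a line with two edges. Hence $P_{kN_0}(2^*)=U(2^*,2,2^*)$ and $\Ext^1_{kN_0}(2^*,2)\neq 0$. Inflation along $N\to N/Q\cong N_0$ is injective on $\Ext^1$, since restriction to the complement $N_0$ is a left inverse. Equivalently, $P(2^*)/P(2^*)\rad(kQ)\cong P_{kN_0}(2^*)$, so $2$ does occur in $L_2(P(2^*))$. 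The paper itself uses $kN$-modules such as $\mathcal H_{Q^\tau}(k_N)=U(3_1,2^*,2,3_1)$ in Lemma~\ref{InductionFromN1}(iv), where $2^*$ sits directly over $2$.

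This cannot be patched, because it makes uniqueness up to isomorphism fail.
\begin{itemize}
\item Fix one uniserial $M$ with layers $2^*,3_2,2$; one exists, e.g.\ $7{\downarrow}^H_N$. Put $U:=\rad M\cong U(3_2,2)$.
\item Since $\Hom_{kN}(2^*,3_2)=0$, the sequence $0\to 2\to U\to 3_2\to 0$ gives an injection $\Ext^1_{kN}(2^*,2)\hookrightarrow\Ext^1_{kN}(2^*,U)$. Its image is the kernel of $\Ext^1_{kN}(2^*,U)\to\Ext^1_{kN}(2^*,3_2)$.
\item Every class outside this image gives a uniserial module with layers $2^*,3_2,2$.
\item Since $\End_{kN}(2^*)=\End_{kN}(U)=k$, two such modules are isomorphic only if their classes are proportional.
\item Let $\zeta$ be the class of $M$ and $0\neq\epsilon$ a class in the image. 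The classes $\zeta+c\epsilon$, $c\in k$, are pairwise non-proportional. So there are infinitely many pairwise non-isomorphic modules of the form $U(2^*,3_2,2)$.
\end{itemize}
Concretely, keep the $Q$-action on $M$ and replace the action of $n\in N_0$ by $\rho(n)+c\,\iota\,\theta(n)\,\pi$. Here $\theta$ is a non-trivial $1$-cocycle of $N_0$ in $\Hom_k(2^*,2)$, $\pi:M\to M/\rad M\cong 2^*$, and $\iota:2\cong\soc M\hookrightarrow M$. This is again a $kN$-module, because $Q$ acts trivially on $M/\rad M$ and on $\soc M$, and $\pi\iota=0$.

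So the statement as literally formulated is false. The paper's bare appeal to the PIMs in \cite[Theorem 3.8]{Kos86} runs into the same copy of $2$ in $L_2(P(2^*))$. What is true needs a normalization beyond the Loewy series. For instance, restriction to $N_0$ maps $\epsilon$ to a generator of the one-dimensional space $\Ext^1_{kN_0}(2^*,3_2\oplus 2)$. Hence exactly one member of the family has semisimple restriction to $N_0$. Information of this kind is what the identification $\Omega^0(34{\downarrow}^G_N)\cong 7{\downarrow}^H_N$ in Lemma~\ref{F(34)} actually requires.

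A smaller point: your existence criterion is reversed. The class of $U(2^*,3_2)$ lifts to $\Ext^1_{kN}(2^*,U)$ exactly when its Yoneda product with the class of $U$ \emph{vanishes} in $\Ext^2_{kN}(2^*,2)$. Lifting is what existence of $U(2^*,3_2,2)$ means, and a nonzero product is the obstruction to it.
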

\begin{proof} Follows from \cite[Theorem 3.8]{Kos86}.
%\\
%{\sf\small\color{red} (The second author warned the first author 
%that we need more detailed and convinced proof
%because there might be infinitely many non-isomorphic uniserial modules of length $3$
%that have the SAME Loewy structure, though. For it the first author can provide
%another a few pages for its proof.)}
\end{proof}

\begin{Lemma}\label{F(34)} $F(34)=7$.
\end{Lemma}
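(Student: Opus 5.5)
Since $34$ is a simple $A$-module other than $45,45'$, Lemma~\ref{vertex}(i) gives it vertex $P$. Lemma~\ref{stableEqA-B}(iii) then yields $F(34)={\mathfrak f'}^{-1}\circ\mathfrak f(34)$. The plan is to compute the Green correspondent $\mathfrak f(34)$ in $\mod-kN$, show that it coincides with $\mathfrak f'(7)$, and then invoke uniqueness of Green correspondents.

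The first step is to determine $34{\downarrow}^G_N$, or at least its non-projective part, from the known structure of the $3$-modular representations of ${\sf M}_{12}$ restricted to $N\cong(C_3\times C_3)\rtimes\GL_2(3)$. The relevant data are in \cite{KW99a, KW99b}: the Loewy structures of the Green correspondents of the simple $A$-modules are computed there. I expect $\mathfrak f(34)$ to be the uniserial $kN$-module $U(2^*,3_2,2)$ of Lemma~\ref{kN-module}. By dimension count, $34-7=27$ has to be absorbed by projective, or $\mathfrak Y$-projective, summands; the $7$ here is $2+3+2$.

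The second step is the analogous computation for $H=\SL_3(3)$: show that $\mathfrak f'(7)$ is also of the form $U(2^*,3_2,2)$. The $7$-dimensional simple module is the adjoint-type module, $7=\mathfrak{sl}_3$ modulo centre. Its restriction to $N$ is small enough to compute directly, or it can be read off from \cite{Kos87}, whose description of $P(15)$ and $P(6)$ in Lemma~\ref{P(6)-P(15)} involves $7$. Once both correspondents are known to have Loewy series $2^*,3_2,2$ and to be uniserial, Lemma~\ref{kN-module} forces $\mathfrak f(34)\cong\mathfrak f'(7)$. Hence $F(34)={\mathfrak f'}^{-1}\mathfrak f'(7)=7$.

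An alternative route, if the restrictions are hard to pin down, is to use Lemma~\ref{stableEqA-B}(i). Since $34$ is simple, $F(34)=34\otimes_A\mathfrak M$ is indecomposable with vertex $P$, and stable equivalences preserve $\dim\underline{\Hom}$ and $\underline{\End}$. One can then compare $\underline{\Hom}_B(F(k),F(34))$, $\underline{\Hom}_B(F(15),F(34))$, and so on, using Lemma~\ref{F(k)F(15)F(15*)}. Together with the invariance $F(34)^\tau\cong F(34^\sigma)=F(34)$ from Lemma~\ref{stableEqA-B}(iv), this would narrow the possibilities to $7$.

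The main obstacle is the first step: identifying $\mathfrak f(34)$ exactly, including its isomorphism type and not merely its composition factors. This is why Lemma~\ref{kN-module} is needed, since a uniserial module with a given Loewy series need not be unique in general. I would first extract from \cite{KW99a} that $\mathfrak f(34)$ is uniserial with the stated layers, and then apply the uniqueness result.
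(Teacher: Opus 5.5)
Your proposal is correct and follows essentially the same route as the paper. Both use vertex $P$ and Lemma~\ref{stableEqA-B}(iii) to reduce to Green correspondents, then identify $\Omega^0(34{\downarrow}^G_N)\cong U(2^*,3_2,2)\cong 7{\downarrow}^H_N$ (read off from \cite[(3.7)]{KW99b} and \cite[(1.3)Proposition(iv)]{Kos87}), with Lemma~\ref{kN-module} supplying the isomorphism, so your alternative stable-Hom route is not needed.
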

\begin{proof}
First notice that $P$ is a vertex of $34$ by \cite[Theorem (i)]{KW99b}.
Now, it follows from Lemma~\ref{kN-module}, \cite[(3.7)]{KW99b} 
and \cite[(1.3)Proposition(iv)]{Kos87}
that 
$$\Omega^0(34{\downarrow}^G_N)\cong
U(2^*,3_2,2)\cong7{\downarrow}^H_N.
$$
So that $\mathfrak f'(7)=\mathfrak f(34)$ by the definition of Green correspondence.
Thus, it follows by Lemma~\ref{stableEqA-B}(iii) that
$F(34)={\mathfrak f'}^{-1}\circ \mathfrak f(34)={\mathfrak f'}^{-1}\circ \mathfrak f'(7)\cong 7$.
\end{proof}

%%%%\newpage
\section{The images $F(10)$ and $F(10')$}

In \S\S 6--7 we freely use \cite[(1.1)Lemma]{Kos85}, that is quite useful to know precise
Loewy and socle structures for $kG$-modules.

\begin{Lemma}\label{RelativeProjCover}
\begin{enumerate}
\renewcommand{\labelenumi}{\rm{(\roman{enumi})}}
\item
${\mathrm{Sc}}(N,Q)\cong U(k,1,k)%\boxed{\begin{matrix} k \\ 1 \\ k\end{matrix}}
\ \leftrightarrow \chi_I+\chi_{2a}$\\
where $\chi_I, \chi_{2a}\in\Irr(N)$ and the indices are as in \cite[(7.4)(ii)]{KW99b}.
%%%\smallskip
\item 
${\mathrm{Sc}}(N,N_0)={\mathrm{Sc}}(N,P_0)\cong
U(k,2,3_1,2^*,k)
%\boxed{\begin{matrix} k\   \\ 2\ \\3_1\\2^*\\ k\  \end{matrix}}
\ \leftrightarrow \chi_I+\chi_{8_1}$ 
\\
where the indices of $\chi_i$ are as in {\rm{(i)}}.
\end{enumerate}
\end{Lemma}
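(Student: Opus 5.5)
For (i) I would use that $Q\unlhd N$, so $\Sc(N,Q)$ is the trivial-source summand of $k_Q{\uparrow}^N=k[N/Q]$ containing the trivial quotient. Since $N/Q\cong N_0\cong\GL_2(3)$, the module $k_Q{\uparrow}^N$ is the inflation of the regular module $kN_0$. So $\Sc(N,Q)$ is the inflation of the projective cover of $k$ over $k\GL_2(3)$ in characteristic $3$. The principal $3$-block of $\GL_2(3)$ has cyclic defect group $P_0\cong C_3$, with inertial index $2$ and exceptional multiplicity $1$. Its Brauer tree is a line with two edges, namely the trivial module and the sign-type one-dimensional module $\det$. Hence $P_{N_0}(k)$ is uniserial $U(k,\det,k)$. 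Inflated to $N$, $\det$ becomes the non-trivial one-dimensional $kN$-module, which is $1=1_{kN}$ in the notation of Notation~\ref{Notation1}, so $\Sc(N,Q)\cong U(k,1,k)$. Its ordinary character is the inflation of $1+\det$'s companion character of degree $2$ lifting $P_{N_0}(k)$. This is the character of $\GL_2(3)$ of degree $3=1+2$, that is, trivial plus the degree-$2$ character through $\GL_2(3)\to\mathfrak S_3$. Matching it with the table in \cite[(7.4)(ii)]{KW99b} gives $\chi_I+\chi_{2a}$.

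For (ii), first $\Sc(N,N_0)=\Sc(N,P_0)$ because $P_0\in\Syl_3(N_0)$ and $k_{P_0}{\uparrow}^{N_0}$ has $k$ as a summand, by the standard property of Scott modules used before (\cite[Chap.4 Corollary 8.5]{NT88}). Next, $k_{N_0}{\uparrow}^N$ has dimension $|N:N_0|=9$ and, as a $kQ$-module, is the regular module $kQ$ by Mackey, since $N=QN_0$ and $Q\cap N_0=1$. Thus $k_{N_0}{\uparrow}^N\cong k[Q]$ with $N_0$ acting by conjugation, a $9$-dimensional module with simple socle $k$ and simple head $k$ because $kQ$ is local. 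Hence it is indecomposable and equals $\Sc(N,N_0)$.

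To get the Loewy structure, I would filter $kQ$ by powers of its augmentation ideal $J$. This is $N_0$-stable, with $J^i/J^{i+1}\cong S^i(V^*)$ truncated (the $3$-restricted symmetric powers), where $V=\mathbb F_3^2$ is the natural $N_0$-module. The layers then have dimensions $1,2,3,2,1$. Identifying the $kN$-simples, the layers are $k$, $2$ (or its dual), $3_1$ (the symmetric square, on which $Q$ acts trivially), $2^*$, and $k$. Since $J^i$ is the radical series of the $kQ$-module, and $Q\unlhd N$ is a $3$-group, $\rad(kN)$-layers coincide with these. So the module is uniserial $U(k,2,3_1,2^*,k)$. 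The labelling $2$ versus $2^*$ has to be fixed consistently with \cite{KW99a,Kos87}.

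The character is the permutation character $1_{N_0}{\uparrow}^N$. Its values are the number of fixed points of conjugation on $Q$, so it equals $1+\theta$, where $\theta$ is the degree-$8$ character of $N$ induced from a non-trivial linear character of $Q$, using that $N_0$ is transitive on $Q\setminus\{1\}$. Identifying $\theta=\chi_{8_1}$ in \cite[(7.4)(ii)]{KW99b} is the main bookkeeping step. The main obstacle is matching these notations exactly: $1$ versus $2$, $2^*$, $3_1$ versus $3_2$, and $\chi_{2a}$ versus $\chi_{8_1}$. For this I would compare Brauer characters on the $3'$-classes of $N_0$.
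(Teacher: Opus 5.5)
Your argument is correct, and it is more self-contained than the paper's, which only cites. For (i), the paper replaces $Q$ by $Q\rtimes SD_{16}$, which has the same Sylow $3$-subgroup (\cite[Chap.4 Corollary 8.5]{NT88}). Thus $\Sc(N,Q)$ is a summand of the inflation of $k_{SD_{16}}{\uparrow}^{\GL_2(3)}$. The paper then quotes \cite[Lemmas 3.1 and 3.4]{Kos86} for $k_{SD_{16}}{\uparrow}^{\GL_2(3)}\cong U(k,1,k)$ and $k_{N_0}{\uparrow}^N\cong U(k,2,3_1,2^*,k)$, and gets the characters from \cite[Chap.4 Theorem 8.9(iii)]{NT88}.

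You instead use the larger module $k_Q{\uparrow}^N$ and read off $P_{N_0}(k)$ from the Brauer tree of $B_0(k\GL_2(3))\cong k\mathfrak S_3$. Since $k_{SD_{16}}{\uparrow}^{\GL_2(3)}$ is projective of dimension $3$ with $k$ as a quotient, it is exactly this PIM, so both routes reach the same module. In (ii) you derive the structure the paper cites, via $k_{N_0}{\uparrow}^N\cong kQ$ with conjugation action and the augmentation filtration of $kQ\cong k[x,y]/(x^3,y^3)$. This explains why the module is uniserial and why its character is $1_N+\theta$. The cost is that you must match $2$ versus $2^*$, $3_1$ versus $3_2$, and $\theta$ versus $\chi_{8_1}$ with the labels of \cite{KW99b}; the paper inherits that matching from its references.

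Three steps need tightening.

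\emph{Radical layers.} Your reason, that $Q\unlhd N$ is a $3$-group, does not suffice. Since $N/Q\cong\GL_2(3)$ has order divisible by $3$, $\rad(kN)$ strictly contains $kN\cdot J$, where $J$ is your augmentation ideal of $kQ$, so the two filtrations need not agree in general. What is true is $\rad(W)\supseteq WJ$ for every $kN$-module $W$, because $Q=O_3(N)$ acts trivially on all simple $kN$-modules. Apply this to $W=MJ^i$ with $M:=k_{N_0}{\uparrow}^N$. The layers $MJ^i/MJ^{i+1}$ are $k$, $J/J^2$, $S^2(J/J^2)$, $(J/J^2)^*$, $k$, and each is a simple $kN_0$-module; for instance $S^2(J/J^2)$ restricts to the Steinberg module of $\SL_2(3)$. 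Hence $\rad(MJ^i)=MJ^{i+1}$, which gives uniseriality. You should state the simplicity of these layers explicitly, since the argument rests on it.

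\emph{The character $\theta$.} It is not induced from a linear character $\lambda$ of $Q$; that induced character has degree $48$. It is induced from the extension of $\lambda$ that is trivial on ${\mathrm{Stab}}_{N_0}(\lambda)$, living on the inertia group $Q\rtimes{\mathrm{Stab}}_{N_0}(\lambda)$ of index $8$. It is irreducible by Clifford theory because its degree equals the length of the $N_0$-orbit on the nontrivial characters of $Q$.

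\emph{Identifying the middle layer.} Every simple $kN$-module has $Q$ in its kernel, so that is not what singles out the middle layer. What matters is that it is $S^2(J/J^2)$ rather than its twist by $\det$.
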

\begin{proof}
(i)
From Notation~\ref{Notation1}, $SD_{16}\in\Syl_2(N_0)$. % (note that $N_0\cong\GL_2(3)$).
Thus, by \cite[Chap.4 Corollary 8.5]{NT88} and \cite[Lemmas~3.1 and 3.4(i)]{Kos86},
%\begin{align*}
\[
\Sc(N,Q)\cong\Sc(N,Q\rtimes SD_{16})\,\Big|\,
k_{Q\rtimes SD_{16}}{\uparrow}^N=
k_{Q\rtimes SD_{16}}{\uparrow}^{Q\rtimes\GL_2(3)}
%\\
=k_{SD_{16}}{\uparrow}^{\GL_2(3)}
\cong
U(k,1,k).
\]
%\end{align*}
Since the final term is considered as an indecomposable $kN$-module,
we have
\begin{equation*}\label{ScottForQ} 
\Sc(N,Q)\cong U(k,1,k).%\boxed{\begin{matrix} k \\ 1\\ k\end{matrix}}.
\end{equation*}
Note that $\Sc(N,Q)$ is a trivial source $kN$-module. Hence
by \cite[Chap.4, Theorem~8.9(iii)]{NT88}, %and elementary calculations,
we know that the corresponding ordinary characters are as it is desired.

(ii) As in the proof of (i),
\cite[Chap.4 Theorem 8.5]{NT88} and \cite[Lemmas 3.1 and 3.4(ii)]{Kos86} imply that
\begin{equation*}\label{ScottForC3}
\Sc(N,P_0)\cong\Sc(N,N_0)\,\Big|\,k_{N_0}{\uparrow}^N\cong 
k_{\GL_2(3)}{\uparrow}^{(C_3\times C_3)\rtimes\GL_2(3)}
\cong
U(k,2,3_1,2^*,k).
\end{equation*}
For the ordinary characters corresponding to $\Sc(N,N_0)$, just as in the proof of (i).
\end{proof}

\begin{Lemma}\label{InductionFromN1}
Let $1_{N_1}\in\Irr(N_1)$ be the trivial ordinary character of $N_1$
(see Notation~\ref{Notation1}). 
Note that $N^\tau\,{\not=}\,N$, but $Q^\tau\lneqq P\lneqq N\cap N^\tau$
(see the diagram in Notation~\ref{Notation1}).
Then 
\begin{enumerate}
\renewcommand{\labelenumi}{\rm{(\roman{enumi})}}
\item
$1_{N_1}{\uparrow}^N=\chi_I+\chi_{3_1}+\chi_{8_1}$ (as ordinary characters),
where $\chi_i$ is the same as in \cite[(7.4)(i)]{KW99b}.
\item
$k_{N_1}{\uparrow}^N = 2\times k_N+2_{kN}+{2^*}_{kN}+2\times 3_1
\text{ (as composition factors)}.
$
%
%%%%%\newpage
\item
\item[{}]
\begin{tikzpicture}
\draw
(-1.3,-0.7)node[left]()
{${\mathrm{Sc}}(N,Q^\tau)={\mathrm{Sc}}(N,N_1)=k_{N_1}{\uparrow}^N
%=
%\begin{matrix}
%\boxed{\begin{matrix} k \ \ 3_1 \\ 2^*\\ 2 \ \ k \\ 3_1
%       \end{matrix}}.
%\\
%{\rm{(Loewy\ series)}}
%\end{matrix}
=$}
(-1,0)node[above](){$k$}--(-0.2,-1.1)node[below](){\  \, $2$}
(0,0)node[above](){$3_1$}--(0,-0.3)node[below](){$2^*$}
(0,-0.7)--(0,-1)
(0,-1.5)--(0,-1.8)node[below](){$3_1$}
(0.2,-0.6)--(1.1,-1.8)node[below](){$k$}
;
\end{tikzpicture}
\item
$\mathcal H_{Q^\tau}(k_N)=
U(3_1,2^*,2,3_1)$
and
$\mathcal H_Q(k_N)=
U(3_2,2,2^*,3_2).$
\end{enumerate}
\end{Lemma}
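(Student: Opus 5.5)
For (i) I will compute $1_{N_1}{\uparrow}^N$ by Frobenius reciprocity against the character table of $N=(C_3\times C_3)\rtimes\GL_2(3)$ in \cite[(7.4)(i)]{KW99b}. The degree is $|N:N_1|=432/36=12$. Since $N_1\supseteq Q^\tau$ contains elements of $Q$ and of $P\setminus Q$, I will compute $\langle\chi,1_{N_1}{\uparrow}^N\rangle_N=\langle\chi{\downarrow}_{N_1},1\rangle_{N_1}$ class by class. The fusion of $N_1\cong\mathfrak S_3\times\mathfrak S_3$-classes into $N$ is explicit from the matrices in Notation~\ref{Notation1}: the identity, two kinds of elements of order $3$ (in $Q$ and outside $Q$), the involutions $\mathrm{diag}(-1,1,-1)$, $\mathrm{diag}(1,-1,-1)$, $\mathrm{diag}(-1,-1,1)$, and the elements of order $6$. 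This should give multiplicity one exactly for $\chi_I$, $\chi_{3_1}$ and $\chi_{8_1}$. The degrees check, since $1+3+8=12$.

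For (ii) I will reduce the characters of (i) modulo $3$ using the decomposition matrix of $kN$ from \cite{KW99a,KW99b}. Since $O_3(N)=Q$ acts trivially on the simple modules, the simples are those of $\GL_2(3)$ in characteristic $3$: the modules $k,1,2,2^*,3_1,3_2$. Part (i) of Lemma~\ref{RelativeProjCover} gives $\chi_I+\chi_{2a}\leftrightarrow U(k,1,k)$, and part (ii) gives $\chi_I+\chi_{8_1}\leftrightarrow k+2+3_1+2^*+k$. Hence $\chi_{8_1}$ reduces to $k+2+2^*+3_1$. I expect $\chi_{3_1}$ to reduce to $3_1$, since it is inflated from $\GL_2(3)$ or is a $3$-dimensional simple. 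Summing gives (ii).

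For (iii), $N_1$ contains $Q^\tau$ as a Sylow $3$-subgroup of index $4$ (prime to $3$), so $k_{N_1}{\uparrow}^N$ is a trivial source module. It is a direct summand of $k_{Q^\tau}{\uparrow}^N$, and $\Sc(N,Q^\tau)=\Sc(N,N_1)$ by \cite[Chap.4 Corollary 8.5]{NT88}. The main point is indecomposability. The dimension of $\mathrm{End}_{kN}(k_{N_1}{\uparrow}^N)$ equals the number of $(N_1,N_1)$-double cosets in $N$, which is $\langle 1_{N_1}{\uparrow},1_{N_1}{\uparrow}\rangle=3$ by (i). Any nontrivial summand would be a trivial source module with vertex contained in $Q^\tau$, and its character would be a subsum of $\chi_I+\chi_{3_1}+\chi_{8_1}$. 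The only summand containing $\chi_I$ is the Scott module. A summand with character $\chi_{3_1}$ or $\chi_{8_1}$ alone is impossible: its head and socle would have to include $k$, since $\Hom(k_{N_1}{\uparrow},k)\neq0$ forces $k$ into the head of the Scott part. More concretely, I will compare with the Scott modules of Lemma~\ref{RelativeProjCover} and use Mackey with the vertex of $U(k,1,k)$ to rule out splittings. Once indecomposability holds, I will determine the Loewy structure from the composition factors in (ii) together with:
\begin{itemize}
\item self-duality;
\item $\dim\Hom(k_{N_1}{\uparrow},S)=\dim(S^{N_1})$ for each simple $S$, which gives head $k\oplus 3_1$ and, dually, the same socle;
\item the $\Ext^1$-quiver of $kN$ from \cite{Kos86} to place $2^*$ and $2$ in the middle.
\end{itemize}

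For (iv), $\mathcal H_{Q^\tau}(k_N)=\Omega_{Q^\tau}(k_N)/k_N$. The relative $Q^\tau$-projective cover of $k_N$ is $\Sc(N,Q^\tau)$ from (iii), mapping onto the head $k$. Its kernel is $U(3_1,2^*,2,k,3_1)$-like, and quotienting by the socle copy $k$ should leave the uniserial module $U(3_1,2^*,2,3_1)$. The statement for $Q$ follows by applying the automorphism $\tau$, which interchanges $Q$ and $Q^\tau$ and maps $3_1\leftrightarrow3_2$ and $2\leftrightarrow2^*$. Alternatively, I would compute it directly via $\Sc(N,Q)=U(k,1,k)$ after tensoring with $1$. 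I expect the main obstacle to be the precise Loewy structure in (iii) and (iv), specifically showing the heart is uniserial rather than a sum. For this I would rely on $\Ext^1_{kN}$ computations from \cite[Theorem 3.8]{Kos86}.
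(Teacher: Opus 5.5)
Your proposal has a genuine gap at the heart of (iii): the indecomposability of $Y:=k_{N_1}{\uparrow}^N$.

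The double-coset count $\dim_k\End_{kN}(Y)=3$ cannot detect a splitting. For $p$-permutation modules, $\dim_k\Hom$ equals the inner product of the lifted characters, so for instance $Y\cong W'\oplus 3_1$ with $W'$ affording $\chi_I+\chi_{8_1}$ would also give $2+1=3$. Your claim that a summand affording $\chi_{3_1}$ or $\chi_{8_1}$ alone ``would have to have $k$ in its head and socle'' has no basis. The fact $\dim_k\Hom_{kN}(Y,k)=1$ only says that exactly one summand has $k$ in its head, and a summand affording $\chi_{3_1}$ would simply be the simple module $3_1$.

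The missing idea is a vertex argument. Every summand of $Y$ is relatively $Q^\tau$-projective.
\begin{itemize}
\item The character $\chi_{3_1}$ is inflated from $N/Q\cong\GL_2(3)$, since the irreducible characters of $N$ whose kernel does not contain $Q$ have degrees $8,8,16$. It has $3$-defect zero there, so $3_1$ is the inflation of a projective simple $k\GL_2(3)$-module. Hence $3_1$ is a summand of $k_Q{\uparrow}^N$. By Green's divisibility theorem ($|P:R|$ divides $\dim 3_1=3$), its vertex is $Q=O_3(N)$, which is not $N$-conjugate into $Q^\tau$. So $3_1$ is not a summand of $Y$. The paper reaches this conclusion differently, from the values of the complementary character $\chi_I+\chi_{8_1}$ on $3$-elements via \cite[II Lemma 12.6]{Lan83}.
\item A summand affording $\chi_{8_1}$ alone is excluded as well. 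Its complement would afford $\chi_I+\chi_{3_1}$, hence be $k\oplus 3_1$ because $\Ext^1_{kN}(k,3_1)=0=\Ext^1_{kN}(3_1,k)$ \cite[Theorem 3.8]{Kos86}. But $k_N$ has vertex $P$, so it is not a summand of $Y$.
\end{itemize}
Your appeal to ``Mackey with the vertex of $U(k,1,k)$'' supplies neither step.

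Your determination of the head and socle via $\dim_k\Hom_{kN}(Y,S)=\dim_k S^{N_1}$ is fine, and cleaner than the paper's route to $[Y,3_1]^N=1$. The rest of the Loewy structure, however, is not argued, and the shape you anticipate is wrong. In fact $L_2(Y)=2^*$ and $L_3(Y)=k\oplus 2$: the socle copy of $k$ lies in the third Loewy layer, hanging from $2^*$ alongside $2$. Your guessed kernel ``$U(3_1,2^*,2,k,3_1)$'' is impossible, since $\Ext^1_{kN}(k,3_1)=0$. The substantive step is ruling out $L_2(Y)=2^*\oplus 2$, and for this the paper needs more than $\Ext^1$-information:
\begin{itemize}
\item the map $Y\to W:=\Sc(N,N_0)=U(k,2,3_1,2^*,k)$ with $[Y,W]^N=2$, which forces an image $U(3_1,2^*,k)$;
\item the Loewy structures of the projective indecomposable $kN$-modules;
\item indecomposability itself, to contradict a splitting $Y=Y_3\oplus Y_4$.
\end{itemize}

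Finally, the $Q$-half of (iv) cannot be obtained by applying $\tau$. We have $N^\tau\neq N$, and no automorphism of $N$ can move $Q^\tau$ to $Q$, because $Q=O_3(N)$ is characteristic. So $\tau$ carries $\mathcal H_{Q^\tau}(k_N)$ to a module for $N^\tau$, not for $N$. Your suggested direct route actually gives $P_Q(k_N)=\Sc(N,Q)=U(k,1,k)$ by Lemma~\ref{RelativeProjCover}(i). Hence $\Omega_Q(k_N)=U(1,k)$ and the heart is the one-dimensional module $1_{kN}$. With the paper's definition of $\mathcal H_Q$, that formula therefore needs reinterpretation. The paper's one-line ``(iv) follows immediately from (iii)'' does not address this either.
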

\begin{proof}
(i) Easy by the definitions of $N$, $N_1$ in Notation~\ref{Notation1}.

(ii) Follows by (i) and \cite[(7.4)(ii)]{KW99b}.

(iii)
Set $Y:={k_{N_1}}{\uparrow}^N$.
First, since $P\in\Syl_3(N)$ is a vertex of $k_N$ by \cite[Corollary~7.6]{NT88},
$k_N$ cannot be relatively $Q^\tau\in\Syl_3(N_1)$-projective. Hence
\begin{equation}\label{k_N}
k_N\,{\not|}\,Y.
\end{equation}

Next we prove 
\begin{equation}\label{DirectSummand3_1}
3_1\,{\not|}\,Y.
\end{equation}
Suppose that $3_1\,|\,Y$. Then there exists a $U_{kN}$ such that $Y=3_1\oplus U$.
Then, by looking at the composition factors of $U$ by (ii),
it follows from (\ref{k_N}) and \cite[Chap.4 Theorem 8.9(iii)]{NT88} that
$U$ is indecomposable. By (i) and \cite[Chap.4 Theorem 8.9(iii)]{NT88},
$U$ affords an ordinary character $\chi_U:=\chi_1+\chi_{8_1}$. 
Then, $\chi_U(3a)=\chi_U(3c)=0$ and $\chi_U(3b)=3$ where $3a, 3b, 3c$ are elements of $N$
of order $3$, respectively such that $|C_N(3a)|=54, |C_N(3b)|=18, |C_N(3c)|=9$, respectively
(see \cite[(7.4)(i)]{KW99b}. So that we can consider that 
$Q^\tau=\langle 3a\rangle\times\langle z\rangle$ with $z\in Z(P)$ 
(recall that $Q^\tau\leq P^\tau=P\geq Q$ in Notation~\ref{Notation1}) and 
that $P=Q^\tau\rtimes\langle 3b\rangle$.
Now \cite[II Lemma 12.6(iii)]{Lan83} implies that 
$3b\in R$ for a vertex $R$ of $U$ and that neither $3a$ nor $z$ is contained
in any vertex of $U$. However, since $U$ is relatively $N_1$-projective (and hence 
relatively $Q^\tau$-projective), $3b\in R\leq_N Q^\tau$. This means that
$3b\in_N Q^\tau-\{1\}=\{(3a)^n\,|\,n\in N\}\ni 3a$, and hence $3b=_N 3a$, a contradiction.

Next, assume that $Y$ has a direct summand $U$ such that $U\,|\,Y$ and that $U$ is affords the 
character $\chi_{8_1}\in\Irr(N)$ (see (i) and \cite[Chap.4 Theorem 8.9(iii)]{NT88}), and hence
$Y=U\oplus V$ for a $kN$-submodule $V$ of $Y$. 
Then, again (i), (ii) and \cite[Chap.,4 Theorem 8.9(iii)]{NT88} imply that the liftable ordinary character 
afforded by $V$ is $\chi_1+\chi_{3_1}$, which yields that $V=k+3_1$ (as composition factors).
Since $\Ext_{kN}^1(k,3_1)=\Ext_{kN}^1(3_1,k)=0$ by \cite[Theorem 3.8]{Kos86},
$V=k\oplus 3_1$, that contradicts (\ref{k_N}). 
Thus, $Y$ is indecomposable, and hence by \cite[Chap.4 Theorem~8.4 and Corollary~8.5]{NT88}, 
\begin{equation}\label{YisScott}
Y:={k_{N_1}}{\uparrow}^N=\Sc(N,N_1)=\Sc(N,Q^\tau).
\end{equation}
Next, it follows from \cite[I Corollary 17.4]{Lan83} that $Y$ has a submodule isomorphic to $3_1$
by \cite[(7.4)(ii)]{KW99b}. That is, by noting the self-dualities, we have
\begin{equation}\label{31<Y}
[Y,3_1]^N=[3_1,Y]^N\geq 1.
\end{equation}
Next suppose that $[Y,3_1]^N\,{\not=}\,1$. Then by (\ref{31<Y}) and (ii), 
$[Y,3_1]^N=[3_1,Y]^N=c_Y(3_1)=2$. Thus $(3_1\oplus 3_1)\,|\,Y$ which contradicts 
(\ref{DirectSummand3_1}). So,
\begin{equation}\label{31-Y}
[Y,3_1]^N=[3_1,Y]^N=1.
\end{equation}
Obviously, it follows from \cite[Chap.4 Theorem 8.9(i)]{NT88} and (i) that
\begin{equation}\label{k_N-Y}
[Y,k_N]^N=[k_N,Y]^N=1.
\end{equation}
Now we claim that $[Y,2]^N=0$.  We use the notation in \cite[(7.4)(iii)]{KW99b}. That is,
the simple $2_{kN}$ affords the ordinary character $\chi_2$. Then, elementary calculations 
imply that $[2{\downarrow}_{N_1}, k_{N_1}]^{N_1}=[k_{N_1},2{\downarrow}_{N_1}]^{N_1}=0$.
Hence
\begin{align*}
[Y,2]^N&=[{k_{N_1}}{\uparrow}^N, 2]^N \text{\quad by (\ref{YisScott})}
\\
&=[({k_{N_1}}{\uparrow}^N)\otimes_k 2^*, k_N]^N \text{\quad by the adjointness}
\\
&=[(k_{N_1}\otimes_k 2^*{\downarrow}_{N_1} ){\uparrow}^N, k_N]^N
\text{ by \cite[II Corollary 6.3]{Lan83}}
\\
&=
[2^*{\downarrow}_{N_1}, k_{N_1}]^{N_1} \text{\quad by Frobenius reciprocity} 
\\
&=[k_{N_1},2{\downarrow}_{N_1}]^{N_1}  \text{\quad by the adjointness}
\\
&=0 \text{\quad by the above}.
\end{align*}
Similarly we have $[2,Y]^N=0$. Thus, as usual the adjointness yields that 
\begin{equation}\label{2inY}
[Y,2]^N=[2^*,Y]^N=[2,Y]^N=[Y,2^*]^N=0
\end{equation}
and hence by (\ref{k_N-Y}), (\ref{2inY}), (ii) and the self-duality of $Y$,
\begin{equation}\label{L1(Y)}
L_1(Y)= k\oplus 3_1 \cong S^1(Y):=\soc(Y).
\end{equation}
Now, let $W$ be the trivial source uniserial $kN$-module in Lemma~\ref{RelativeProjCover}(ii)
affording the ordinary character $\chi_1+\chi_{8_1}$. Then, as usual, 
\cite[Chap.4 Theorem~8.9(iii)]{NT88}
and (i) imply that 
\[
[Y,W]^N=[W,Y]^N=2.
\] 
Hence by (\ref{k_N-Y}) there is a homomorphism $\varphi\in\Hom_{kN}(Y,W)$ such that
\[
Y\twoheadrightarrow {\mathrm{Im}}(\varphi)\ \in \ 
\{W:=
U(k,2,3_1,2^*,k), U(2,3_1,2^*,k),U(3_1,2^*,k),U(2^*,k) 
\}.
\]
${\mathrm{Im}}(\varphi)$ cannot be the second nor the fourth uniserial modules
from (\ref{2inY}).
If ${\mathrm{Im}}(\varphi)=W$, then by the dualilties, we can consider $W\cong W^*\leq Y^*\cong Y$,
and hence $Y\cong W\oplus 3_1$ by (ii) and (\ref{31-Y}), contradicting (\ref{YisScott}). Hence
\begin{equation}\label{Im-phi}
Y\ \twoheadrightarrow\ {\mathrm{Im}}(\varphi)\cong
U(3_1,2^*,k).
\end{equation}
So that by (\ref{L1(Y)}) and (ii),
\begin{equation}\label{LoewyY}
Y = \boxed{\begin{matrix} 3_1 \ \ \ k \\ 2^* \cdots \\ k \cdots \\ \vdots  \end{matrix}  }
\text{\quad (Loewy series), \  and }2, 3_1\text{ are left}.
\end{equation}

Next, we claim that $L_2(Y)=2^*$. Suppose that $L_2(Y)\,{\not=}\,2^*$. Then, by noting that
$3_1$ does not show up in $L_2(Y)$ by the structures of $P(3_1)$ and $P(k_N)$ in
\cite[Theorem 3.8]{Kos87}, (\ref{LoewyY}) says that $L_2(Y)=2^*\oplus 2$.
Then, by the self-duality of $Y$,
\[
Y = \boxed{\begin{matrix} 3_1 \ \ \ k \\ 2^* \ \ \ 2\, \\ k \ \ \ \, \, 3_1  \end{matrix}  }
\text{ (Loewy series and socle series)}
\]
since $\Ext_{kN}^1(k,3_1)=0$ by \cite[Theorem 3.8]{Kos87}.
We freely use [Ibid.]
in the following several times.
First, $Y$ has a submodule $Y_1$ whose socle series is 
$\boxed{\begin{matrix}2^*\\ k \ 3_1\end{matrix}  }$.
Hence, $Y_1=Y_2\oplus 3_1$ for a submodule $Y_2=
U(2^*,k)$
%\boxed{\begin{matrix}2^* \\ k\, \end{matrix}}$
by looking at the $L_2(P(k_N))$. Then, 
$
\overline Y:=Y/Y_2 =
\boxed{ \begin{matrix}3_1 \ \ k \\ 2 \\ \, 3_1\end{matrix} }
\text{ (Loewy series).}
$
Thus $\overline Y$ has a submodule $\overline V$ such that $\overline Y/\overline V\cong k_N$,
which means that $\overline V$ has a uniserial submodule 
$\overline W:=
U(2,3_1)$
with $\overline V/\overline W=3_1$.
Then, by the shape of $L_2(P(3_1))$, $\overline V = 3_1\oplus\overline W$, that implies that
$\overline V$ has a submodule $\overline Z:=3_1\oplus 3_1$.
So that 
$\overline Y/\overline Z\cong
U(k,2)$.
Then, there is submodules $Z$ $Y$ such that
$$
Y \gneqq Z \gneqq Y_2
\text{\quad such that \quad}
Y/Z\cong
U(k,2),
Z/Y_2\cong 3_1\oplus 3_1 = \overline Z
\text{ and }
Y_2\cong 
U(2^*,k).
$$
Let us look at $Z$. It follows from (\ref{L1(Y)}) and the socle series of $Y$ above
and by noting that $Z\leq Y$, $3_1\,|\,S_1(Z)$ and $3_1\,|\,S_3(Z)$.
This yields that
$Z=3_1\oplus Y_3$ for a submodule $Y_3$ with $Y_3=
U(3_1,2^*,k)$.
%\boxed{\begin{matrix} 3_1\\2^*\\ k \ \end{matrix}}$.
On the other hand, by (\ref{Im-phi}) and the self-duality of $Y$, $Y$ has a submodule
$Y_4\cong
U(k,2,3_1)$.
%\boxed{\begin{matrix} k\, \\ 2\, \\ 3_1\end{matrix}}$.
Since both of $Y_3$ and $Y_4$ are submodules of $Y$, we can consider $Y_3\cap Y_4$ in $Y$
and that is obviously $0$, that means that $Y=Y_3\oplus Y_4$ by (ii), contradicting
(\ref{YisScott}). Thus, we have by (\ref{LoewyY}) that
\begin{equation}\label{LoewyY-2}
Y = \boxed{\begin{matrix} 3_1 \ \ \ k \\ 2^* \\ k \cdots \\ \vdots  \end{matrix}  }
\text{\quad (Loewy series), \  and }2, 3_1\text{ are left}.
\end{equation}
Then by looking at the structure of $P(2^*)$ in [Ibid.], %\cite[Theorem 3.8]{Kos86},
\begin{equation}\label{LoewyY-3}
Y = \boxed{\begin{matrix} 3_1 \ \ \ k \\ 2^* \\ k \cdots \\ 3_1\cdots\\ \vdots  \end{matrix}  }
\text{\quad (Loewy series), \  and only }2 \text{ is left}.
\end{equation}
Thus, since $2\,{\not |}\,L_2(P(3_1))$ by [Ibid.],%\cite[Theorem 3.8]{Kos86}, 
we have
\begin{equation}\label{LoewyY-4}
Y = \boxed{\begin{matrix} 3_1 \ \ \ k \\ 2^* \\ k \cdots \\ 3_1\cdots \end{matrix}  }
\text{\quad (Loewy series), \  and only }2 \text{ is left}.
\end{equation}

If $2\,|\,L_4(Y)$, then $\Ext_{kN}^1(k,3_1)\,{\not=}\,0$, a contradiction by 
[Ibid.], %\cite[Theorem 3.8]{Kos86},
so that $2\,|\,L_3(Y)$, that is
\begin{equation}\label{LoewyY-5}
Y = \boxed{\begin{matrix} 3_1 \ \ \ k \\ 2^* \\ k \ \ \  2\\ 3_1\end{matrix}  }
\text{\quad (Loewy series)}.
\end{equation}
Thus, again by the self-dualities of $Y$ and the the existence of $Y_4$ in 
the proof of (\ref{LoewyY-5}), we get that
\begin{equation}\label{LoewyY-6}
Y = 
\begin{matrix}
\boxed{\begin{matrix} 3_1 \ \ \ k \\ 2^* \\ k \ \ \  2\\ 3_1\end{matrix}  }
\\
\text{ (Loewy series) }
\end{matrix}
= 
\begin{matrix}
\boxed{\begin{matrix}  3_1 \\ k \ \ 2^* \\ 2 \\ 3_1 \ \ k\end{matrix}  }
\\
\text{ (socle series)}
\end{matrix} 
\ \supseteq \ \ 
\boxed{\begin{matrix} k \\ 2 \\ \ 3_1\end{matrix} }.
\end{equation}
Thus again by %\cite[Theorem 3.8]{Kos86}, 
the shape of $P(3_1)$ in [Ibid.], we finally have
the picture of $Y$ as stated in (iii).

(iv) follows  immediately from (iii).
\end{proof}

\begin{Lemma}\label{H_Q(k_H)} 
%It holds that
\begin{enumerate}
\renewcommand{\labelenumi}{\rm{(\roman{enumi})}}
\item
\item[{}]
\begin{tikzpicture}
\draw
(-1.5,-0.8)node[left](){$P_Q(k_H)$=}
(-1,0.2)node[left](){$k$}
(0,0)node[above](){$6$}--(0.5,-0.4)node[below](){$6^*$}
(-0.1,-0)--(-0.5,-0.4)node[below](){$3$}
(-0.5,-0.8)--(-0.5,-1.1)node[below](){$7$}
(-0.5,-1.5)--(-0.5,-1.8)node[below](){\ $3^*$}
(-0.3,-0.7)--(0.5,-1.8)node[below](){$6$}
(0.5,-0.8)--(0.5,-1.7)
(0.3,-0.8)--(-0.3,-1.8)
(0.3,-2.2)--(0,-2.6)node[below](){$6^*$}
(-0.5,-2.3)--(-0.1,-2.6)
(-0.6,-1.5)--(-1,-2.6)node[below](){$k$}
(-1.2,0)--(-0.6,-1.1)
;
\end{tikzpicture}
\quad\qquad and \quad\qquad
\begin{tikzpicture}
\draw
(-1.5,-0.8)node[left](){$P_{Q^\tau}(k_H)$=}
(-1,0.2)node[left](){$k$}
(0,0)node[above](){$6^*$}--(0.5,-0.4)node[below](){$6$}
(-0.1,-0)--(-0.5,-0.4)node[below](){$3^*$}
(-0.5,-0.8)--(-0.5,-1.1)node[below](){$7$}
(-0.5,-1.5)--(-0.5,-1.8)node[below](){\ $3$}
(-0.3,-0.7)--(0.5,-1.8)node[below](){$6^*$}
(0.5,-0.8)--(0.5,-1.7)
(0.3,-0.8)--(-0.3,-1.8)
(0.3,-2.2)--(0,-2.6)node[below](){$6$}
(-0.5,-2.3)--(-0.1,-2.6)
(-0.6,-1.5)--(-1,-2.6)node[below](){$k$}
(-1.2,0)--(-0.6,-1.1)
;
\end{tikzpicture}

\item
\item[{}]
\begin{tikzpicture}
\draw
(-1.5,-0.8)node[left](){$\mathcal H_{Q}(k_H)$=}
    %(-1,0.2)node[left](){$k$}
(0,0)node[above](){$6$}--(0.5,-0.4)node[below](){$6^*$}
(-0.1,-0)--(-0.5,-0.4)node[below](){$3$}
(-0.5,-0.8)--
(-0.5,-1.1)node[below](){$7$}
(-0.5,-1.5)--(-0.5,-1.8)node[below](){\ $3^*$}
(-0.3,-0.7)--(0.5,-1.8)node[below](){$6$}
(0.5,-0.8)--(0.5,-1.7)
(0.3,-0.8)--(-0.3,-1.8)
(0.3,-2.2)--(0,-2.6)node[below](){$6^*$}
(-0.5,-2.3)--(-0.1,-2.6)
   %(-0.6,-1.5)--(-1,-2.6)node[below](){$k$}
   %(-1.2,0)--(-0.6,-1.1)
;
\end{tikzpicture}
\qquad\qquad and \qquad
\begin{tikzpicture}
\draw
(-1.5,-0.8)node[left](){$\mathcal H_{Q^\tau}(k_H)$=}
    %(-1,0.2)node[left](){$k$}
(0,0)node[above](){$6^*$}--(0.5,-0.4)node[below](){$6$}
(-0.1,-0)--(-0.5,-0.4)node[below](){$3^*$}
(-0.5,-0.8)--
(-0.5,-1.1)node[below](){$7$}
(-0.5,-1.5)--(-0.5,-1.8)node[below](){\ $3$}
(-0.3,-0.7)--(0.5,-1.8)node[below](){$6^*$}
(0.5,-0.8)--(0.5,-1.7)
(0.3,-0.8)--(-0.3,-1.8)
(0.3,-2.2)--(0,-2.6)node[below](){$6$}
(-0.5,-2.3)--(-0.1,-2.6)
   %(-0.6,-1.5)--(-1,-2.6)node[below](){$k$}
   %(-1.2,0)--(-0.6,-1.1)
;
\end{tikzpicture}
\end{enumerate}
\end{Lemma}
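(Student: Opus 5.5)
The approach is to compute the relatively $Q$-projective cover $P_Q(k_H)$ explicitly as a summand of an induced module from $N$. By Lemma~\ref{InductionFromN1}(iii), the Scott module $\Sc(N,Q)$ and the relative projective covers over $N$ are already known. The heart is then read off by removing the top and socle copies of $k$. The $Q^\tau$-versions follow from the $Q$-versions by applying the outer automorphism $\tau$, using (\ref{sigma-tau}): $\tau$ swaps $Q$ and $Q^\tau$, swaps $3$ with $3^*$ and $6$ with $6^*$, and fixes $k$ and $7$. So it suffices to treat $Q$.

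\emph{Step 1: reduction to an induced module.} Since $k_H$ has vertex $P$, it is not relatively $Q$-projective, so $\Omega_Q(k_H)$ is nonzero and $P_Q(k_H)\,|\,k_Q{\uparrow}^H$. I plan to work with $\Sc(H,Q)$, which is the indecomposable summand of $k_Q{\uparrow}^H$ having $k$ in its top. Relative $Q$-projective covers are built from summands of modules induced from $Q$, and $k_H$ is a quotient of $\Sc(H,Q)$ with kernel of the right size. So I expect $P_Q(k_H)=\Sc(H,Q)$.

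By \cite[Chap.4 Corollary 8.5]{NT88}, $\Sc(H,Q)=\Sc(H,Q\rtimes SD_{16})$, and this is a summand of $k_{Q\rtimes SD_{16}}{\uparrow}^H$, which has dimension $|H|/144=39$. Its ordinary character can be computed from the character table of $\SL_3(3)$ by Frobenius reciprocity, $1+12+26$ or similar. Decomposing with the decomposition matrix of $B$ should give the composition factors $2\times k+3+3^*+7+2\times 6+2\times 6^*$, matching the pictured module of dimension $38+1=39$.

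\emph{Step 2: the Loewy structure.} The module $\Sc(H,Q)$ is a self-dual trivial source module whose top and socle both contain $k$, each exactly once by \cite[Chap.4 Theorem 8.9]{NT88}. I would determine $\Hom_{kH}(\Sc(H,Q),S)$ for the simples $S$ using trivial-source character theory, and I expect the top to be $k\oplus 6$. Then, following the pattern of Lemma~\ref{InductionFromN1}(iii), I would peel off Loewy layers using the projective structures in Lemma~\ref{P(6)-P(15)}(ii) and the $\Ext^1$-quiver of $B$ from \cite{Kos87}.

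As a cross-check I would use Lemma~\ref{GreenHeart}(ii): $f'$ takes $\Omega_Q(k_H)$ to $\Omega^N_Q(k_N)$, and Lemma~\ref{InductionFromN1}(iv) gives the heart $\mathcal H_Q(k_N)=U(3_2,2,2^*,3_2)$. Restricting the candidate module to $N$ must contain this Green correspondent plus $\mathfrak Y$-projective parts, which pins down the diagram. Once $P_Q(k_H)$ is known, $\mathcal H_Q(k_H)=\Omega_Q(k_H)/k_H$ is obtained by deleting the top $k$ and the socle $k$. This is exactly part (ii), provided the $k$'s hang off as drawn (top $k$ over $7$, bottom $k$ under $3^*$).

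\emph{Main obstacle.} The hard part is Step 2: fixing the exact gluing of the middle layers $3,6^*,7,6,3^*$ and excluding alternative Loewy structures with the same composition factors. I would first pin down the radical and socle layers via dimensions of $\Hom$-spaces into known uniserial quotients of $P(6)$ and $P(6^*)$. Then I would argue by contradiction against decomposability, as in the proof of Lemma~\ref{InductionFromN1}(iii), using that $\Sc(H,Q)$ is indecomposable.
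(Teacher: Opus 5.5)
Your framework matches the paper's: $P_Q(k_H)=\Sc(H,Q)=\Sc(H,L)$ with $L=Q\rtimes SD_{16}$, which is a summand of the $39$-dimensional module $U:=k_L{\uparrow}^H$. Then $\tau$ gives the $Q^\tau$-version, and deleting the unique top and socle $k$ gives (ii). But the proposal stops where the content of the lemma begins, so there is a genuine gap.

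\textbf{Step 1.} You read off the composition factors of $\Sc(H,Q)$ from the full character $1_L{\uparrow}^H=1+12+26$. This silently assumes that $U$ is indecomposable, which you never prove. Your later remark about ``using that $\Sc(H,Q)$ is indecomposable'' does not address it: what must be excluded is a second summand of $U$.

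\textbf{Step 2.} Nothing is actually determined here: not even $L_2(\Sc(H,Q))$, let alone the gluing of the middle layers. The proposed cross-check cannot supply this. Restriction to $N$ only detects the Green correspondent $\mathcal H_Q(k_N)=U(3_2,2,2^*,3_2)$ modulo relatively projective summands, and that does not determine the $kH$-structure. Also, the correspondence to use there is $\mathfrak f'=f_{(H,P,N)}$, not $f'$, because $\Omega_Q(k_H)$ has vertex $P$.

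\textbf{How the paper closes both points.} It uses an input you do not: \cite[(1.17)Proposition(iii)]{Kos87}, which gives the radical and socle series of $U=k_L{\uparrow}^H$. The radical layers are $k\oplus 6$, $3\oplus 6^*$, $7\oplus 6$, $k\oplus 3^*$, $6^*$. Indecomposability is then a two-line argument. Suppose $U=V\oplus W$ with $V=\Sc(H,Q)$. Then $L_1(V)=k$, so $L_2(V)$ is a summand both of $L_2(P(k_H))=7\oplus 15\oplus 15^*$ and of $L_2(U)=3\oplus 6^*$. Hence $L_2(V)=0$ and $V\cong k_H$, contradicting that $V$ has vertex $Q\neq P$. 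The exact diagram is then read off from the structures of $P(k_H)$ and $P(6)$ in \cite[Theorem 1]{Kos87}.

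Indecomposability alone could also be obtained by a character argument, as in Lemma~\ref{InductionFromN1}(iii). The character of the Scott summand must be positive on transvections, which lie in $Q$. It must vanish on regular unipotent elements, which are not conjugate into $Q$. On these two classes, $12$ takes the values $3,0$ and the $26$ in $1_L{\uparrow}^H$ takes $-1,-1$, so only $1+12+26$ qualifies. But the Loewy structure itself still needs either the citation or a genuine computation, and your proposal does not carry one out.
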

\begin{proof}
(i) Recall that $L:=Q\rtimes L_0\lneqq N$ with $L_0\cong SD_{16}$ in Notation~\ref{Notation1}.
Then it follows from \cite[(1.17)Proposition(iii)]{Kos87} that
\begin{equation}\label{P_Q(k)}
U:=k_L{\uparrow}^H 
\ = \
\begin{matrix}
\boxed{
\begin{matrix} k&&6\ \\ 3&&6^*\\7&&6\ \\ k&&3^*\\&6^*&\end{matrix}
}
\\
\text{Loewy series}
\end{matrix}
\ = \
\begin{matrix}
\boxed{
\begin{matrix} &6&\\ k&&3\ \\ 7&&6^* \\ \ 3^*&&6\  \\k&&6^*\end{matrix}
}
\\
\text{socle series}
\end{matrix}.
\end{equation}
We claim next that $U$ is indecomposable. 

So, suppose that $U=V\oplus W$ with $kH$-submodules $V\,{\not=}\,0$ and
$W\,{\not=}\,0$. We can assume that $V=\Sc(H,L)$, and hence $V=\Sc(H,Q)$ by
\cite[Chap.4 Corollary 8.5]{NT88}. 
%First since $V$ is $Q$-projective and $Q\not= 1$,
%$c_V(k)\not= 1$, and hence, $\c_V(k)=2$.
Then by (\ref{P_Q(k)}), $L_1(V)\cong k$ and $L_1(W)\cong 6$.
%Set $J:={\mathrm{rad}}(kH)$.
Then 
\begin{align*}
L_2(U)&=L_2(V)\oplus L_2(W)
\\&\subseteq L_2(P(k_N))\oplus L_2(P(6))
\\&
= (7\oplus 15\oplus 15^*)\oplus (3\oplus 6^*) \
\text{ by \cite[Theorem 1]{Kos87}}.% and Lemma \ref{P(6)-P(15)}(ii)} .
\end{align*}
Hence (\ref{P_Q(k)}) and Lemma~\ref{P(6)-P(15)}(ii) yield that
$L_2(W)=3\oplus 6^*$, which means that $L_2(V)=0$, 
and hence $V\cong k_H$, which is a contradiction since $Q \lneqq P\in\Syl_3(H)$.

Thus, $U=\Sc(H,L)=\Sc(H,Q)=P_Q(k_H)$. Therefore (\ref{P_Q(k)}) and 
the structures of $P(6)$ and $P(k_H)$ in \cite[Theorem 1]{Kos87} %Lemma~\ref{P(6)-P(15)}(ii) 
imply the first assertion.
The latter part is obtained immediately by the first part and (\ref{sigma-tau}).

(ii) Follows immediately by (i) since $\mathcal H_Q(k_H):=\Omega_Q(k_H)/k_H$.
\end{proof}

%%%%\newpage
\begin{Lemma}\label{Res_N(10')}
%\begin{enumerate}
%\renewcommand{\labelenumi}{\rm{(\roman{enumi})}}
%\item
%$
%f(10)=
%10{\downarrow}^G_N\ =\ 
%\boxed{\begin{matrix}   & & k_N& & \\
%                      &\slash&   & \backslash& \\
%                     \ \ \slash & &   &  & 2 \\
%                     |&  &   &  & | \\
%                 1_{kN}&  &   &  & 3_1\\
%                    |  &  &   &  & |  \\
%                    \ \ \backslash&  &   &  & 2^*  \\
%                   & \backslash &   &\slash &   \\
%                      &  & k_N & &   
%                  \end{matrix}}.
%\ \text{ and } \ 
%$
%\item
$\mathfrak f(10')=
10'{\downarrow}^G_N=
U(3_1,2^*,2,3_1)$
%\boxed{\begin{matrix}3_1 \\ 2* \\ 2 \,\ \\ 3_1\end{matrix}}
%=\mathcal H_{Q^\tau}(k_N)
and
$\mathfrak f(10)=
10{\downarrow}^G_N=
U(3_2,2,2^*,3_2) 
%\boxed{\begin{matrix}3_2 \\ 2\ \\ 2^* \\ 3_2\end{matrix}}
=\mathcal H_{Q}(k_N)
$.

%\end{enumerate}
\end{Lemma}
\begin{proof} 
By (\ref{sigma-tau}) it suffices to prove the first part.
The second equality in the first part follows 
by the structure of the $kN$-module $X$ in \cite[the proof of (5.3)Theorem]{KW99b}.
Then, since the $kN$-module at the 3rd term above is indecomposable, 
the first equality follows by the definition of the Green correspondence.
The 3rd equality is obtained by
Lemma~\ref{InductionFromN1}(iv).
\end{proof}

%%%%%\newpage
\begin{Lemma}\label{F(10')} 
%It holds the following;
\item[{}]
\begin{center}
\begin{tikzpicture}
\draw
(-1,-1)node[left](){$F(10)=$}
(0,0)node[above](){$6$}
--(0.5,-0.3)node[below](){$6^*$}
(-0.1,0)--(-0.5,-0.3)node[below](){$3$}
(-0.5,-0.7)--(-0.5,-1)node[below](){$7$}
(-0.5,-1.4)--(-0.5,-1.7)node[below](){$3^*$}
(0.5,-0.7)--(0.5,-1.7)node[below](){$6$}
(0.4,-2.2)--(0.1,-2.5)node[below](){$6^*$}
(-0.5,-2.2)--(0,-2.5)
(-0.3,-0.7)--(0.3,-1.7)
(0.3,-0.6)--(-0.3,-1.7)
;
\end{tikzpicture}
\quad and \quad
\begin{tikzpicture}
\draw
(-1,-1)node[left](){$F(10')=$}
(0,0)node[above](){$6^*$}
--(0.5,-0.3)node[below](){$6$}
(-0.1,0)--(-0.5,-0.3)node[below](){$3^*$}
(-0.5,-0.7)--(-0.5,-1)node[below](){$7$}
(-0.5,-1.4)--(-0.5,-1.7)node[below](){$3$}
(0.5,-0.7)--(0.5,-1.7)node[below](){$6^*$}
(0.4,-2.2)--(0.1,-2.5)node[below](){$6$}
(-0.5,-2.2)--(0,-2.5)
(-0.3,-0.7)--(0.3,-1.7)
(0.3,-0.6)--(-0.3,-1.7)
;
\end{tikzpicture}
\end{center}
\end{Lemma}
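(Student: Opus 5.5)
The picture of $F(10)$ in the statement is exactly the module $\mathcal H_Q(k_H)$ of Lemma~\ref{H_Q(k_H)}(ii), and the picture of $F(10')$ is $\mathcal H_{Q^\tau}(k_H)$. So I would prove $F(10)\cong\mathcal H_Q(k_H)$, and then get $F(10')$ by symmetry. By Lemma~\ref{vertex}(i), $10$ has vertex $P$. Lemma~\ref{stableEqA-B}(iii) then gives $F(10)={\mathfrak f'}^{-1}(\mathfrak f(10))$, and Lemma~\ref{Res_N(10')} gives $\mathfrak f(10)=\mathcal H_Q(k_N)=U(3_2,2,2^*,3_2)$. It therefore suffices to show that $\mathfrak f'(\mathcal H_Q(k_H))\cong\mathcal H_Q(k_N)$.

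\emph{Step 1: restrict the heart to $N$.} Take the defining sequence $0\to k_H\to\Omega^H_Q(k_H)\to\mathcal H_Q(k_H)\to0$ and restrict it to $N$. As in the proof of Lemma~\ref{GreenHeart}(ii), we have $\Omega^H_Q(k_H){\downarrow}_N=\Omega^N_Q(k_N)\oplus Y$, where $Y$ is relatively $Q$-projective. Next I would check that the submodule $k_H{\downarrow}_N=k_N$ lies in the summand $\Omega^N_Q(k_N)$ and is the trivial submodule used to form $\mathcal H_Q(k_N)$. For this I would compare $\soc(\Omega^N_Q(k_N))$, using the description of $\mathcal H_Q(k_N)$ in Lemma~\ref{InductionFromN1}(iv), with the socle of $\Omega^H_Q(k_H)$ read off from Lemma~\ref{H_Q(k_H)}(i). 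Granting this, we get
\[
\mathcal H_Q(k_H){\downarrow}_N\cong\mathcal H_Q(k_N)\oplus Y .
\]

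\emph{Step 2: identify the Green correspondent.} From the Loewy picture in Lemma~\ref{H_Q(k_H)}(ii), $\mathcal H_Q(k_H)$ has simple top $6$, so it is indecomposable. In its restriction, $\mathcal H_Q(k_N)=\mathfrak f(10)$ is indecomposable with vertex $P$, while every summand of $Y$ is relatively $Q$-projective and so has vertex strictly smaller than $P$. The restriction has a summand with vertex $P$, so the vertex of $\mathcal H_Q(k_H)$ cannot be properly contained in $P$; hence its vertex is $P$. Green correspondence then says $\mathfrak f'(\mathcal H_Q(k_H))$ is the unique summand of the restriction with vertex $P$, namely $\mathcal H_Q(k_N)$. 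Therefore $F(10)={\mathfrak f'}^{-1}(\mathcal H_Q(k_N))=\mathcal H_Q(k_H)$, which is the first picture.

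\emph{Step 3: the module $10'$.} By (\ref{sigma-tau}) and Lemma~\ref{stableEqA-B}(iv),
\[
F(10')=F(10^\sigma)=F(10)^\tau=\mathcal H_Q(k_H)^\tau=\mathcal H_{Q^\tau}(k_H),
\]
which is the second picture, by Lemma~\ref{H_Q(k_H)}(ii).

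The main obstacle is Step 1: showing that the trivial submodule $k_N$ really sits inside the summand $\Omega^N_Q(k_N)$ and not diagonally across the summand $Y$. I would handle it by computing $[k_N,\Omega^H_Q(k_H){\downarrow}_N]^N$ via Frobenius reciprocity and the Mackey formula, and aim to show this value is $1$. If that computation is awkward, a fallback is to apply Lemma~\ref{GreenHeart}(ii) directly to $\Omega_Q(k)$ and then pass to the quotient by the socle. That works because $\soc(\Omega_Q(k_N))=k_N$ is simple, by the uniserial shape coming from Lemma~\ref{InductionFromN1}(iii).
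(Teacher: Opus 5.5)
Your outline is the paper's. Lemma~\ref{vertex}(i), Lemma~\ref{stableEqA-B}(iii) and Lemma~\ref{Res_N(10')} reduce everything to $\mathfrak f'(\mathcal H_Q(k_H))\cong\mathcal H_Q(k_N)$. The paper takes this straight from Lemma~\ref{GreenHeart}(ii), which is a statement about $\Omega_Q$ and not about hearts, and then gets $10'$ by $\tau$.

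The gap is your Step~1, as you suspected, and neither of your remedies closes it.
\begin{itemize}
\item The number you hope is $1$ is at least $2$. By the proof of Lemma~\ref{H_Q(k_H)}(i), $\Omega^H_Q(k_H)$ is the augmentation kernel of $P_Q(k_H)=k_L{\uparrow}^H$. Every $N$-orbit on $H/L$ has length $|N:N\cap{}^hL|$ divisible by $3$, because $|L|_3=9<27=|N|_3$. Hence every $N$-orbit sum lies in $\Omega^H_Q(k_H)$, and $[k_N,\Omega^H_Q(k_H){\downarrow}_N]^N=|N\backslash H/L|\geq 2$. The submodule $k_H$ is spanned by the sum of all $39$ cosets, which has a nonzero component in every Mackey summand, so no uniqueness argument can locate it.
\item The fallback takes its socle from Lemma~\ref{InductionFromN1}(iii), but that lemma is about $Q^\tau$, and there $\soc\Sc(N,Q^\tau)=k\oplus 3_1$ is not simple. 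Even a simple socle on a summand would not force a diagonally embedded trivial submodule into that summand: a shear from the complement may just as well move it into the complement.
\end{itemize}

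More seriously, for $Q$ this step cannot produce the module you need.
\begin{itemize}
\item Since $Q\unlhd N$, Lemma~\ref{RelativeProjCover}(i) gives $P^N_Q(k_N)=\Sc(N,Q)\cong U(k,1,k)$. Hence $\Omega^N_Q(k_N)\cong U(1,k)$ and $\Omega^N_Q(k_N)/k_N\cong 1_{kN}$. So even if $k_H$ sat inside that summand, you would obtain $1_{kN}$, not the $10$-dimensional $\mathfrak f(10)=U(3_2,2,2^*,3_2)$.
\item The complement is also not relatively $Q$-projective. Mackey gives $k_L{\uparrow}^H{\downarrow}_N\cong U(k,1,k)\oplus k_R{\uparrow}^N$ with $|R|=12$, and the Sylow $3$-subgroup $N\cap{}^hQ$ of $R$ is not contained in $Q$ by Lemma~\ref{Q}(ii). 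So the splitting $\Omega^H_Q(k_H){\downarrow}_N=\Omega^N_Q(k_N)\oplus Y$ borrowed from the proof of Lemma~\ref{GreenHeart}(ii) is not justified here.
\item Consequently the label $\mathcal H_Q(k_N)$ for $U(3_2,2,2^*,3_2)$ in Lemmas~\ref{InductionFromN1}(iv) and \ref{Res_N(10')} is not the heart in the sense of the definition; note that neither $\sigma$ nor $\tau$ normalizes $N$. The same caveat applies to the paper's one-line appeal to Lemma~\ref{GreenHeart}(ii).
\end{itemize}

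Your method does work if you run it for $10'$ and $Q^\tau$, which is not normal in $N$.
\begin{itemize}
\item The $N$-orbits on $H/L^\tau$ have lengths $12$ and $27$, with stabilisers of order $36$ (Sylow $3$-subgroup $Q^\tau$) and $16$. So $k_{L^\tau}{\uparrow}^H{\downarrow}_N\cong\Sc(N,Q^\tau)\oplus\mathcal P$ with $\mathcal P$ projective.
\item The augmentation on $\mathcal P$ factors through $P^N_{Q^\tau}(k_N)=\Sc(N,Q^\tau)$, so $\Omega^H_{Q^\tau}(k_H){\downarrow}_N\cong\Omega^N_{Q^\tau}(k_N)\oplus\mathcal P$.
\item Under this isomorphism $k_H$ becomes $k(s,t)$ with $0\neq s\in\soc\Omega^N_{Q^\tau}(k_N)$ and $t\in\soc\mathcal P$. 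Here $s\neq 0$ because a map $\mathcal P\to\Sc(N,Q^\tau)$ kills $\mathcal P^N$, the socle of a summand $P(k_N)$, as $\dim P(k_N)\geq 27>12$.
\item Since $\mathcal P$ is injective, there is a map $\Omega^N_{Q^\tau}(k_N)\to\mathcal P$ with $s\mapsto -t$, and the corresponding shear removes $t$. This gives $\mathcal H_{Q^\tau}(k_H){\downarrow}_N\cong U(3_1,2^*,2,3_1)\oplus\mathcal P=\mathfrak f(10')\oplus\mathcal P$.
\item Your Step~2 then yields $F(10')\cong\mathcal H_{Q^\tau}(k_H)$, and $F(10)=F(10')^\tau\cong\mathcal H_Q(k_H)$.
\end{itemize}
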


\begin{proof}
Enough to prove the first part by (\ref{sigma-tau}). 
First, Lemma~\ref{GreenHeart}(ii) yields that
\begin{equation}\label{Heart}
\mathfrak f'(\mathcal H_Q(k_H))\cong\mathcal H_Q(k_N).
\end{equation}
Then
\begin{align*}
F(10)&=\mathfrak f'^{-1}\circ{\mathfrak f}(10)
\quad\text{ by Lemmas~\ref{vertex}(i) and \ref{stableEqA-B}(iii)}
\\&= \mathfrak f'^{-1}(\mathcal H_{Q}(k_N)) \quad\text{ by Lemma \ref{Res_N(10')}}
\\&= \mathcal H_{Q}(k_H)  \quad\text{\color{black} by (\ref{Heart})}.
%\\&= \mathcal H_{Q}(k_H)) \quad\text{ since }f'\text{ is the Green correspondence},
%
%
%\\&=
%\boxed{
%\begin{matrix}&6^*&\\
%/&&\backslash\\
%3^*&&6\\
%|&\backslash\ /&|\\
%7&/\backslash& |\\
%|&/\ \ \ \backslash&|\\
%3&&6^*\\
%\backslash&&/\\
%&6&
%\end{matrix}
%}
\end{align*}
Hence the assertion holds by Lemma~\ref{H_Q(k_H)}(ii).
\end{proof}

\begin{Lemma}\label{X(10')}
There are complexes of $kH$-modules
\begin{enumerate}
\renewcommand{\labelenumi}{\rm{(\roman{enumi})}}
\item
$$
\begin{matrix}
& & {\small -1\text{st}}\ \ & {\small 0\text{th}}\ \ & 
\\
X^\bullet(10'):\ &0\overset{\partial^{-2}}{\longrightarrow} 
&P(6^*)\overset{\partial^{-1}}{\twoheadrightarrow} &F(10')\overset{\partial^0}{\longrightarrow}&0
\end{matrix}
$$
with cohomology ${\mathrm{H}}^{-1}(X^\bullet(10'))=U(k,7,3^*,6^*)$
%\boxed{
%\begin{matrix}k \ \\ 7\ \\ 3^* \\6^*\end{matrix}
%}$, 
where $\partial^{-1}$ is the canonical epimorphism.
\item
$$
\begin{matrix}
%& & -1\text{st}\ \ & 0\text{th}\ \ & 
%\\
X^\bullet(10):\ &0\overset{\partial^{-2}}{\longrightarrow} 
&P(6)\overset{\partial^{-1}}{\twoheadrightarrow} &F(10)\overset{\partial^0}{\longrightarrow}&0
\end{matrix}
$$
with cohomology ${\mathrm{H}}^{-1}(X^\bullet(10))= 
U(k,7,3,6)$,
%\boxed{
%\begin{matrix}k \\ 7 \\ 3 \\6\end{matrix}}$, 
where $\partial^{-1}$ is the canonical epimorphism.
\end{enumerate}
\end{Lemma}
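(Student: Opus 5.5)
By (\ref{sigma-tau}) and Lemma~\ref{stableEqA-B}(iv), applying the outer automorphism $\tau$ interchanges $P(6)$ and $P(6^*)$, $3$ and $3^*$, and $F(10)$ and $F(10')$. So it suffices to prove (ii); (i) then follows by twisting with $\tau$.

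By Lemma~\ref{F(10')} we have $F(10)=\mathcal H_Q(k_H)$, with the Loewy structure of Lemma~\ref{H_Q(k_H)}(ii). In particular its head is $L_1(F(10))=6$, since the only top vertex of the diagram is $6$. Hence $F(10)$ has a simple head, and its projective cover is $P(6)$. We take $\partial^{-1}:P(6)\twoheadrightarrow F(10)$ to be this canonical epimorphism, and put $\partial^{-2}=0$ and $\partial^0=0$. Then $X^\bullet(10)$ is a complex whose cohomology is concentrated in degree $-1$, where it equals $\Ker(\partial^{-1})=\Omega(F(10))$; here we use that $F(10)$ is indecomposable and non-projective, so $P(6)$ is its projective cover. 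It remains to identify this kernel with $U(k,7,3,6)$.

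For the composition factors we compare Lemma~\ref{P(6)-P(15)}(ii) with Lemma~\ref{H_Q(k_H)}(ii). We have
$$P(6)=4\times 6+3\times 6^*+2\times 3+3^*+2\times 7+k,$$
$$F(10)=3\times 6+3\times 6^*+3+3^*+7.$$
Subtracting gives the kernel $6+3+7+k$. For the structure, note that the socle of $P(6)$ is $6$ and it lies in the kernel, so $\soc(\Ker\partial^{-1})=6$ and the kernel is a uniserial candidate. Reading the diagram of $P(6)$ from the bottom up, the chain running from the top $6$ through $3, 7, k$ (right column), then $7, 3$ to the socle $6$ is the part not accounted for by $F(10)$. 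Concretely, $F(10)$ is the quotient of $P(6)$ by the submodule generated along the branch $6 - 3 - 7 - k$ near the bottom. So the kernel has Loewy series $k,7,3,6$ from the top, which we check is uniserial: each layer is a single simple module, and the socle is simple.

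A cleaner route to the same conclusion, and the one I would write, uses relative projectivity. Since $P_Q(k_H)$ is the relative projective cover of $k_H$ and $\mathcal H_Q(k_H)=\Omega_Q(k_H)/k_H$, we can compare $P(6)$ with $P_Q(k_H)$ from Lemma~\ref{H_Q(k_H)}(i). The main obstacle is to justify rigorously that the kernel is genuinely uniserial with the stated layers, rather than merely having the right composition factors. I would do this with the Loewy and socle structure of $P(6)$ in Lemma~\ref{P(6)-P(15)}(ii) and \cite[(1.1)Lemma]{Kos85}. First, $\soc(\Ker)=6$ because it sits inside $\soc P(6)$. Second, $\Ker/6$ lies in the heart $\rad P(6)/\soc P(6)$. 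There the only submodule with composition factors $k,7,3$ and top $k$ is the uniserial $U(k,7,3)$, because $\Ext^1$ between consecutive factors is nonzero exactly along that chain, as seen from the PIM structures in \cite[Theorem 1]{Kos87}.
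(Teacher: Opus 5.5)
Your strategy is the paper's: take the projective cover of $F(10')$ (or $F(10)$), read the kernel off the structure of the PIM, and transfer to the other part with $\tau$. Doing (ii) first and twisting back is the same argument. The problem is that the computation you wrote down is wrong in several places.

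\textbf{Composition factors.} From Lemma~\ref{P(6)-P(15)}(ii), $P(6)=3\times 6+2\times 6^*+2\times 3+3^*+2\times 7+k$, of dimension $54=2\cdot 27$. From Lemma~\ref{H_Q(k_H)}(ii), $F(10)=2\times 6+2\times 6^*+3+3^*+7$, of dimension $37=\dim P_Q(k_H)-2=39-2$. Your multiplicities give dimensions $66$ and $49$. Already $66$ is not divisible by $27=|P|$, so it cannot be the dimension of a projective $kH$-module. The two errors happen to cancel, so $K:=\Ker(\partial^{-1})$ does have composition factors $k+7+3+6$ and dimension $17$, but the counts must be corrected.

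Your sentence saying that the whole right-hand chain $6,3,7,k,7,3,6$ of $P(6)$ is ``not accounted for by $F(10)$'' is also false. Only its lower part $k,7,3,6$ lies in $K$; the upper $6,3,7$ survive in $F(10)$.

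\textbf{Uniseriality.} This step is not justified as written. You restrict to submodules of $K/\soc K$ with head $k$ without proving that $K/\soc K$ has head $k$. The phrase ``$\Ext^1$ is nonzero exactly along the chain'' is not by itself an argument.

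The clean route is the socle series. Since $K\le P(6)$, we have $S^i(K)=K\cap S^i(P(6))$, so $S^i(K)/S^{i-1}(K)$ embeds in $S^i(P(6))/S^{i-1}(P(6))$. Reading the diagram of $P(6)$, its socle layers begin $6;\ 3\oplus 6^*;\ 7\oplus 6;\ k\oplus 3^*$. The paper itself uses $S^2(P(6))/S^1(P(6))=3\oplus 6^*$ in the proof of Lemma~\ref{Hom(X(45),X(10))}.

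Now $K$ has each of $k,7,3,6$ exactly once, and $\soc K=\soc P(6)=6$. Each layer is therefore forced in turn: $S^2(K)/S^1(K)=3$, then $S^3(K)/S^2(K)=7$, then $S^4(K)/S^3(K)=k$. All socle layers are simple, so $K\cong U(k,7,3,6)$.

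The comparison with $P_Q(k_H)$ and relative projectivity that you announce as the ``cleaner route'' plays no role in the argument and can be dropped.
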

\begin{proof} (i) follows by Lemmas~\ref{F(10')} and the structure of $P(6^*)$ in
\cite[Theorem~3.8]{Kos86}.
(ii) follows immediately by (i) and (\ref{sigma-tau}).
\end{proof}

%%%%\newpage
\section{The images $F(45)$ and $F(45')$}
\noindent
Recall by Notation~\ref{GreenCorr} that 
$f:=f_{(G,{\color{black}Q},N)}$ and $f':=f_{(H,{\color{black}Q},N)}$ are the Green correspondences 
with respect to $(G,Q,N)$ and $(H,Q,N)$, respectively (see \cite[Chap.4 \S4]{NT88}),
where $N:=N_G(Q)$ and $Q\lneqq P$ is the fixed subgroup with $Q\cong C_3\times C_3$.

\begin{Lemma}\label{Q} %It holds the following:
\begin{enumerate}
\renewcommand{\labelenumi}{\rm{(\roman{enumi})}}
\item 
$Q\in{\mathrm{vx}}({45'}_{kG})$ and $Q^\tau\in{\mathrm{vx}}(45_{kG})$.
%(note that $Q$ and $Q^\tau$ are not $G$-conjugate).
\item
%({\bf\color{red} See Lemma~\ref{like-TI}}).
For any $h\in H-N$ it holds that
{\color{black}$Q\,\cap\,Q^h=1$}.
%so that 
%$Q\,{\not\in}_G\,
%$\mathfrak X:=\mathfrak X(H,Q,N):=\{Q \cap Q^h\,|\, h\in H-N\}$.
%and hence the Green correspondents $f(45')$ and ${f'}^{-1}(f(45'))$ both are defined. 
\end{enumerate}
\end{Lemma}
\begin{proof} (i) follows from \cite[Theorem (ii)]{KW99b}. 

(ii) 
%\footnote{\color{black}So far only by {\sf GAP}. Must check by hand?}
Follows by elementary calculations.
%{\color{black}\bf (A better proof is of course without {\sf GAP}. It should be done
%by \cite[p.13]{Atlas}, I bieve.)}
%
% we have $P\cap P^g=1$
%\text{\bf (Must check by hand as well !!!)}$ 
%for any element $g\in G-N$.
%Thus, $Q{\not\in}\mathfrak X$, and hence $Q{\not\in}_G\mathfrak X$
%by \cite[Chap.4, Lemma 4.1(ii)]{NT88}, 
%that implies that the vertex $Q$ of $45'$ is not in $\mathcal X$, 
%so that the Green correspondent $f(45')$ is defined 
%(see \cite[\S 4]{NT88}). Similar for $f'$. 
\end{proof}

\begin{Notation}\label{Notation3}
Recall that $F(S)$ is an indecomposable $kH$-module for any simple $kG$-module $S$ in $A$
by Lemma~\ref{stableEqA-B} and \cite[Proposition 4.14.9]{Lin18}. %[Theorem 2.1(ii)]{Lin96a}.
Further from now on till the end of this paper we fix the {\it indecomposable} $kH$-module $X$ 
as
$$
{\color{black}X}  :=F(45')={f'}^{-1}\circ f(45')
$$
which plays so important role to complete this paper.
%(cf. Lemma~\ref{Q}(ii)).
Further recall that $\Irr_k(B)=\{k_H, 3, 3^*, 6, 6^*, 7, 15, 15^*\}$
in Notation~\ref{Notation1}.
\end{Notation}

%%%%%\newpage
\begin{Lemma}\label{Res-Ind_45-45'}
%It holds the following:
\begin{enumerate}
\renewcommand{\labelenumi}{\rm{(\roman{enumi})}}
%\item 
%$Q\in{\mathrm{vx}}({45'}_{kG})$ and $Q^\tau\in{\mathrm{vx}}(45_{kG})$,
%(note that $Q$ and $Q^\tau$ is not $G$-conjugate).
%%%%\smallskip
\item
${45'}{\downarrow}^G_N = f(45')\oplus P(3_2)$.
%%%\smallskip
\item
$f(45'){\uparrow}_N^G=45'\oplus{\mathrm{(proj)}}$.
\item
$\Big(f(45'){\uparrow}_N^G\Big){\downarrow}^G_N = f(45')\oplus{\mathrm{(proj)}}$.
\item
{\color{black}
$f(45'){\uparrow}_N^H = X\oplus{\mathrm{(proj)}}$. 
%\bf(By using Lemma~\ref{Q} where {\sf GAP} is used).
}
\item
$(45'{\downarrow}^G_N){\uparrow}_N^H = X\oplus{\mathrm{(proj)}}$.
%%\medskip
\item
$\Omega(F(45'))\cong F(\Omega(45'))$ as $kH$-modules.
\item
$
\dim_k\Big(\underline{\Hom}_{kN}(f(45'), U(2^*, k_N),)\Big)
\ = \ 
[f(45'), U(2^*,k_N)\,]^N = 1.
$
\item
$L_1(X) = 3^*\oplus 6$ and $S^1(X)=3\oplus 6^*$.
\item
$f(45'){\uparrow}_N^H=X\oplus P(15)\oplus P(15*)$.
\item
$X = 2\times k_H+3\times 3_{kH}+3\times {3^*}_{kH}
+2\times 6_{kH}+2\times{6^*}_{kH}+4\times 7_{kH}$ (as composition factors),
and hence $\dim_k\,X=72$.
\item
$(P(3^*)\oplus P(6)){\downarrow}^H_N\cong P(k_N)\oplus P(1_{kN})\oplus P(2^*)\oplus P(3_1)$.
\item
$X{\downarrow}^H_N = f(45')\oplus P(1_{kN})\oplus P(3_1).$
\item
$\Omega^{-1}(6^*){\downarrow}^H_N=(P(6^*)/6^*){\downarrow}^H_N \cong 
\Big(P(3_1)/U(k_N,2,3_1)\Big)
\oplus P(k_N)$.
\end{enumerate}
\end{Lemma}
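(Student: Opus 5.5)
The thirteen items will be proved in roughly the order listed, since later items rest on earlier ones. The ingredients are: Green correspondence theory for the triples $(G,Q,N)$ and $(H,Q,N)$, which is legitimate because $Q$ is a vertex of $45'$ (Lemma~\ref{Q}(i)) and $Q\cap Q^h=1$ for $h\in H-N$ (Lemma~\ref{Q}(ii)); the known module structures from \cite{KW99b}, \cite{Kos86} and \cite{Kos87}; and Lemma~\ref{stableEqA-B}.

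For (i), I will read off ${45'}{\downarrow}^G_N$ from the explicit description in \cite{KW99b}. There is a unique non-projective summand with vertex $Q$, namely $f(45')$, and the remainder is projective. Comparing composition factors and dimensions ($45=\dim f(45')+\dim P(3_2)$) identifies the remainder as $P(3_2)$. For (ii) and (iii), the plan is as follows. Lemma~\ref{Q}(ii) makes the relevant families $\mathfrak X,\mathfrak Y$ consist of the trivial subgroup, and the analogous statement for $G$ follows from the fusion data, since $\F_P(G)=\F_P(H)$. Green's theorem then gives $f(45'){\uparrow}^G_N=45'\oplus(\text{$1$-projective})$, which is (ii). Restricting back gives $f(45')\oplus(\mathrm{proj})$, which is (iii).

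The same argument in $H$ gives (iv), because $X={f'}^{-1}f(45')$ by Lemma~\ref{stableEqA-B}(iii). Item (v) then follows from (i) and (iv), since inducing $P(3_2)$ yields a projective module. For (vi), $F$ is a stable equivalence of Morita type, so it commutes with $\Omega$ up to projective summands; indecomposability of both sides, via Lemma~\ref{GreenHeart}(i), removes the projective ambiguity. Item (vii) is a direct $\Hom$ computation using the Loewy structure of $f(45')$ from \cite{KW99b}. I will check that no nonzero map into the uniserial module $U(2^*,k_N)$ factors through a projective module; for this, compare with $P(2^*)$, using the fact that the image would have to hit $\soc=k_N$.

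The main obstacle will be (viii)--(xii), which determine $X$ concretely. My approach is to compute $f(45'){\uparrow}^H_N$ by Mackey and Frobenius reciprocity. The multiplicity of $P(S)$ in this induced module is $[f(45'){\uparrow}^H_N,S]^H$ computed stably; in particular, for $S=15,15^*$ this multiplicity should come out as $1$ each, because $15{\downarrow}_N$ is known from \cite[(1.3)]{Kos87}, which gives (ix). Then $\dim X=\dim f(45'){\uparrow}^H_N-2\dim P(15)$, and the composition factors in (x) follow from the decomposition numbers of $B$ applied to the ordinary character of the induced module. For (viii), I will compute $[X,S]^H=[f(45'),S{\downarrow}_N]^N$ for each simple $S$, subtracting the contributions of the projective summands $P(15)$ and $P(15^*)$. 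The socle statement then follows by applying the same computation to $X^*$, or from the self-duality pattern together with (\ref{sigma-tau}).

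Item (xi) is a restriction calculation for projectives, done via characters and Brauer characters on $N$. For (xii), restrict (ix) to $N$. Alternatively, use (v): we have $X{\downarrow}_N=f(45')\oplus(\mathrm{proj})$, and the projective part is pinned down by dimension and by the composition factors from (x) minus those of $f(45')$. Item (xiii) follows from Lemma~\ref{P(6)-P(15)}(ii) by restricting $P(6^*)/6^*$. Here $P(6^*){\downarrow}_N$ is projective and given by (xi) with $\tau$ applied, and $6^*{\downarrow}_N$ is known from \cite{Kos87}. Taking the cokernel, I expect a projective summand $P(k_N)$, since $P(k_N)$ is injective and splits off, together with the quotient $P(3_1)/U(k_N,2,3_1)$; identifying this quotient requires matching the socle of $6^*{\downarrow}_N$ against the structure of $P(3_1)$.
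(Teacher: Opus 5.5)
Your proof of (xii) has a genuine gap, and it comes from a false claim in your discussion of (ii)--(iii). Lemma~\ref{Q}(ii) makes $\mathfrak X(H,Q,N)$ trivial, but it does \emph{not} make $\mathfrak Y(H,Q,N)=\{N\cap Q^h\mid h\in H-N\}$ trivial. For instance, $N\cap Q^{w}$ is the order-$3$ subgroup of $Q^\tau$ consisting of the matrices with vanishing $(3,1)$-entry, and likewise $\mathfrak Y(G,Q,N)\neq\{1\}$. So Green's theorem only gives $X{\downarrow}^H_N=f(45')\oplus(\mathfrak Y\text{-projective})$. Restricting (v) or (ix) to $N$ does not help, because the unknown part is exactly the nontrivial Mackey summand of $(f(45'){\uparrow}^H_N){\downarrow}^H_N$. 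Item (iii) itself is unaffected, since it follows from (i) and (ii).

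The paper's proof of (xii) also uses $(f(45'){\uparrow}^H_N){\downarrow}^H_N=f(45')\oplus(\mathrm{proj})$, and it cites (iii), which was proved only for $G$. So this step needs an argument of its own. One that works:
\begin{itemize}
\item We have $N\backslash H/N=\{N,NwN\}$ and $N\cap N^w=N_1$, so Mackey gives $(f(45'){\uparrow}^H_N){\downarrow}^H_N\cong f(45')\oplus\big((f(45')\otimes w){\downarrow}_{N_1}\big){\uparrow}^N$.
\item Since $w^2=1$, $w$ normalizes $N_1$ and hence its normal Sylow $3$-subgroup $Q^\tau$. So the second summand is projective once $f(45'){\downarrow}_{Q^\tau}$ is.
\item As $Q\trianglelefteq N$ and $Q\cap Q^\tau=Z(P)$, the module $f(45'){\downarrow}_{Q^\tau}$ is relatively $Z(P)$-projective. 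It therefore suffices that $f(45'){\downarrow}_{Z(P)}$ is free, and by (i) that $45'{\downarrow}_{Z(P)}$ is free.
\item $Z(P)\le Q^\sigma$ is conjugate under $N^\sigma=N_G(Q^\sigma)$ to $\langle z'\rangle$ for any $z'\in Q^\sigma-Z(P)$. Since $\langle z'\rangle\cap Q=1$ and $Q\trianglelefteq N$, both $f(45'){\downarrow}_{\langle z'\rangle}$ (it is relatively $Q$-projective) and $P(3_2){\downarrow}_{\langle z'\rangle}$ are free.
\item Hence $45'{\downarrow}_{\langle z'\rangle}$, and therefore $45'{\downarrow}_{Z(P)}$, is free.
\end{itemize}
This yields $X{\downarrow}^H_N=f(45')\oplus(\mathrm{proj})$, and your composition-factor count then completes (xii).

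Your route to (ii) differs from the paper's. The paper never shows $\mathfrak X(G,Q,N)=\{1\}$. Instead it proves the complement of $45'$ in $f(45'){\uparrow}^G_N$ is projective by passing through ${\sf M}_{11}$ (where $Q$ is T.I.\ and $24{\uparrow}^G=45'\oplus P(99)$ by \cite{KW99a}), followed by a Krull--Schmidt argument. Your shortcut is correct and shorter, but ``follows from the fusion data'' is not a justification: $\F_P(G)=\F_P(H)$ cannot see a $3'$-part of $C_G(z)$ that moves $Q$. Argue instead as follows. $N$ is transitive on $Q-\{1\}$, so after replacing $g$ by some $gn$ with $n\in N$, a nontrivial element of $Q\cap Q^g$ generates $Z(P)$. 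Then $Q^g\le C_G(Z(P))=C_N(Z(P))\cong P\rtimes C_2$ by Lemma~\ref{BrauerIndecForC3}(i)(a), so $Q^g\le P$. The normalizer data of Lemma~\ref{BrauerIndecForC3xC3}(i), together with $Q\neq_G Q^\sigma$, then force $Q^g=Q$.

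Three smaller points:
\begin{itemize}
\item In (xiii), $\tau$ does not normalize $N$ ($N^\tau\neq N$), so twisting (xi) by $\tau$ describes restrictions to $N^\tau$, not to $N$. Compute $[P(6^*),\widetilde S{\uparrow}^H_N]^H$ directly, as the paper does.
\item In (viii)/(ix), what shows that $15$ and $15^*$ do not occur in $L_1(X)$, so that the projective part in (ix) is $P(15)\oplus P(15^*)$, is the vanishing of $\underline{\Hom}_H(X,15)$ and $\underline{\Hom}_H(X,15^*)$. This is obtained most cleanly by transport along $F$, as in the paper: $\underline{\Hom}_H(X,15^*)\cong\underline{\Hom}_G(45',15^*)=0$ by Lemmas~\ref{stableEqA-B}(i) and \ref{F(k)F(15)F(15*)}. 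That avoids a hand computation in $kN$.
\item In (x), $f(45')$ is not known to lift, so there is no ordinary character to work with. Use the composition factors of the modules $\widetilde S{\uparrow}^H_N$ instead.
\end{itemize}
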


\begin{proof}
(i) follows from \cite[Theorem(ii)]{KW99b}. 

(ii)
Obviously, 
\begin{equation}\label{f(45')}
f(45'){\uparrow}_N^G=45'\oplus U 
\text{ \ \ for a }kG\text{-module }U,
\end{equation}
which is $\mathcal X(G,{\color{black}Q},N$)-projective.
So we claim that $U$ is projective.
Recall that $L<N$ such that $L\cong Q\rtimes SD_{16}$ by Notation~\ref{Notation1}.
Surely $G$ has the Mathieu group $\sf M_{11}$ of degree $11$ as its subgroup 
with $N_{{\sf M}_{11}}(Q)=L$ by \cite[(7.8)]{KW99b}.
Let $\tilde f:=f_{({\sf M}_{11},Q,L)}$, 
the Green correspondence with respect to $({\sf M}_{11},Q,L)$.
By \cite[p.7]{KW99a}, there is a simple $k\sf M_{11}$-module $24$ of $k$-dimension $24$.
Since $Q\in\Syl_3({\sf M}_{11})$, we have that $24$ cannot have  a cyclic vertex $C_3$ by
\cite[Theorem]{Erd77}, and hence $Q$ is a vertex of $24$.
So that the Green correspondent $\tilde f(24)$ is defined.
%\footnote{Obviously a vertex of $24$ is $Q$ or some $R\leq Q$ with $R\cong C_3$.
%No matter what it is, I mean for both cases, it can be shown that the vertex of $24$
%is in $\mathfrak Z(M_{11}, Q, N_H(Q)=L)$ (see \cite[Chap.4, (4.1) and Theorem 4.3]{NT88})
%since the case that $M_{11}$ and $p=3$ is a {\bf T.I.} situation.} 
Then, 
\begin{align*}
\tilde f(24){\uparrow}_L^G=\, &
\Big(\tilde f(24){\uparrow}_L^{\sf M_{11}}\Big){\uparrow}_{\sf M_{11}}^G
\\ =\,&
\Big( 24\oplus{\mathrm{(proj)}}\Big){\uparrow}_{\sf M_{11}}^G
\text{ since }Q\text{ is a T.I. subset in }{\sf M_{11}}
\text{ (see \cite[(24-1) p.307]{GL83}}) %\cite[Lemma 3.6]{Wak93})}
\\ =\,& 45'\oplus P(99)\oplus
{\mathrm{(proj)}}{\uparrow}_{\sf M_{11}}^G
\text{ by \cite[(3.20)(i)]{KW99a}}
\\ =\,&
45'\oplus{\mathrm{(proj)}},
\end{align*}
so that
\begin{equation}\label{tilde f(24)}
\tilde f(24){\uparrow}_L^G=\, 45'\oplus{\mathrm{(proj)}}.
\end{equation}
Hence,
\begin{align*}
\Big(\tilde f(24){\uparrow}_L^G\Big){\downarrow}^G_N =&\,
(45'\oplus{\mathrm{(proj)}}){\downarrow}^G_N \text{ \quad by (\ref{tilde f(24)}) }
\\
=&\, 45'{\downarrow}^G_N\oplus{\mathrm{(proj)}}
\\
=&\, f(45')\oplus P(3_2)\oplus{\mathrm{(proj)}}\ \ \text{ by (i)}
\\
=&\, f(45')\oplus{\mathrm{(proj)}},
\end{align*}
so that
\begin{equation}\label{tilde f(24)res}
 \Big(\tilde f(24){\uparrow}_L^G\Big){\downarrow}^G_N=f(45')\oplus{\mathrm{(proj)}}.
\end{equation}
Set $U:=\tilde f(24){\uparrow}_L^N$. 
Then, the Mackey decomposition yields that
\[\Big(\tilde f(24){\uparrow}_L^G\Big){\downarrow}^G_N=U\oplus Y
\text{ for a }kN\text{-module }Y.
\]
Hence by (\ref{tilde f(24)res}),
\[
f(45')\oplus{\mathrm{(proj)}}= U\oplus Y.
\]

Suppose that $f(45')\,{|}\,Y$. Then $U$ is projective by the Krull-Schmidt theorem
since $f(45')$ is indecomposable. Hence $U{\downarrow}^N_L$ is projective.
It means that $\tilde f(24)\,|\,\tilde f(24){\uparrow}_L^N{\downarrow}^N_L=U{\downarrow}^N_L$, 
so that $\tilde f(24)$ is projective, which is a contradiction.
 
Thus, $f(45')\,{\not |}\,Y$, so that $f(45')\,|\,U$, and hence $U=f(45')\oplus V$
for a $kN$-module $V$. This implies that
$f(45')\oplus{\mathrm{(proj)}}=f(45')\oplus V\oplus Y$, so that $V$ is projective.
Namely, $U=f(45')\oplus{\mathrm{(proj)}}$. Let $f(45'){\uparrow}_N^G=45'\oplus X$
for a $kG$-module $X$. Then
\begin{equation}\label{final}
\tilde f(24){\uparrow}_L^G=U{\uparrow}_N^G=f(45'){\uparrow}^N_G\oplus{\mathrm{(proj)}}
=45'\oplus X\oplus{\mathrm{(proj)}}.
\end{equation}
Therefore by (\ref{tilde f(24)}) and (\ref{final}),
$45'\oplus{\mathrm{(proj)}}=45'\oplus X\oplus{\mathrm{(proj)}}$, that yields that
$X$ is projective. So (ii) is proved.

%%\medskip

(iii) Easy by (i) and (ii).

(iv) 
This follows by Lemma~\ref{Q}(ii).

(v)
(i) and {\color{black}(iv)} imply that
%\begin{align*}
$(45'{\downarrow}^G_N){\uparrow}_N^H= \Big(f(45')\oplus{\mathrm{(proj)}}\Big){\uparrow}_N^H
= X\oplus{\mathrm{(proj)}}$.
%\end{align*}

(vi) Follows from \cite[Chap.4 Theorem 4.10(ii)]{NT88} and Lemma~\ref{stableEqA-B}(iii)
%%\medskip
%%%%%\newpage

(vii) 
Set $U:=U(2^*,k_N)$.
Take any projective homomorphism $\varphi\in\Hom_{kN}(f(45'), U)$.
Thus, %\cite[Theorem (ii)]{KW99b} and 
by \cite[II Lemma 2.7]{Lan83}, there exist an $\alpha\in\Hom_{kN}(f(45'), P(2^*))$ and
a surjection $\beta\in\Hom_{kN}(P(2^*),U)$ such that
$\varphi=\beta\circ\alpha$.
Since $L_1(f(45'))\cong 2^*$ and $S^1(P(2^*))\cong 2^*$, it holds that the shape of $f(45')$
in \cite[Theorem (ii)]{KW99b} implies that 
${\mathrm{Im}}\,\alpha\cong U(2^*, 2, 2^*)$ or ${\mathrm{Im}}\,\alpha\cong f(45')/U(k_N,2)$,
 so that ${\mathrm{Im}}\,\alpha\subseteq S^6(P(2^*))$ by
the shape of $f(45')$ whichever happens.
On the other hand, $P(2^*)/\Ker\,\beta\cong U$. Thus, the shape of $P(2^*)$ in
\cite[Theorem 3.8]{Kos86} yields that ${\mathrm{Im}}\,\alpha\subseteq\Ker\,\beta$, which
means that $\varphi=0$.
%%\medskip

Throughout the proofs of (viii)--(xii) below we freely use \cite[Theorem~(ii)]{KW99b} 
\cite[Theorem~2.12,7 and Proposition~2.15.4]{Lin18}
%and the Frobenius-Nakayama reciprocity (even for $\underline{\Hom}$) 
without quoting.

(viii)
For any simple $S\in\Irr_k(B)$, we compute
$[X,S]^H$. 

If $S=k_H$, then
$[X,k_H]^H=[{f'}^{-1}(f(45')),k_H]^H\leq [f(45'){\uparrow}_N^H, k_H]^H
=[f(45'), k_H{\downarrow}^H_N]^N = [f(45'), k_N]^N=0$ %by \cite[Theorem (ii)]{KW99b},
and hence 
\begin{equation}\label{[X,k_H]}
[L_1(X),k_H]^H=0.
\end{equation}
If $S=3$, then similarly
$[X,3]^H=[{f'}^{-1}(f(45')),3]^H
\leq [f(45'){\uparrow}_N^H, 3]^H
= [f(45'), 3{\downarrow}^H_N]^N 
=[f(45'), U(2, 1_{kN})\,]^N
=0$
by \cite[(1.3)Proposition (ii)]{Kos87}. 
Namely
\begin{equation}\label{[X,3]}
[L_1(X),3]^H=0.
\end{equation}
Similarly, from \cite[(1.3)Proposition(iii)]{Kos87}, %and \cite[Theorem (ii)]{KW99b}, 
we have
\begin{equation}\label{[X,6^*]}
[L_1(X),6^*]^H=0.
\end{equation}
By the similar way, from
\cite[(1.3)Proposition(iv)]{Kos87} %and \cite[Theorem~(ii)]{KW99b} 
it holds
\begin{equation}\label{[X,7]}
[L_1(X),7]^H=0.
\end{equation}
Furthermore, 
%\begin{comment}
\begin{align*}
\Hom_{kH}(X,3^*)\cong  & \,
\underline{\Hom}_{kH}(X, 3^*) \ \text{ by \cite[Corollary 4.13.4]{Lin18}} 
%\cite[II Corollary 2.8]{Lan83}}
\\ \cong & \,
\underline{\Hom}_{kH}(f(45'){\uparrow}{_N^H}, 3^*) \text{ by 
%Lemma~\ref{Res-Ind_45-45'}
(iv)}
\\ \cong &\,
\underline{\Hom}_{kN}(f(45'), 3^*{\downarrow}^H_N) 
\text{ by \cite[Proposition 2.15.4]{Lin18}}
\\ \cong &\, 
\underline{\Hom}_{kN}(f(45'), U(1,2^*))
\text{ by \cite[(1.3)Proposition (ii)]{Kos87}}
\\ \cong&\,
\underline{\Hom}_{kN}(f(45'), 2^*)%\text{\ by \cite[Theorem(ii)]{KW99b}}
\\ \cong &\,
\Hom_{kN}(f(45'), 2^*) \ \text{ by \cite[Corollary 4.13.4]{Lin18}} 
%\cite[II Corollary 2.8]{Lan83}}
\\ \cong &k \quad \text{ by \cite[Theorem (ii)]{KW99b}}.
\end{align*}
%\end{comment}
Thus %by \cite[Theorem~(ii)]{KW99b}, 
\begin{equation}\label{[X,3^*]}
[L_1(X), 3^*]^H=1.
\end{equation}
Similarly
\begin{align*}
\Hom_{kH}(X, 6) =\, &\underline{\Hom}_{kH}(X, 6) \ \text{ by \cite[Corollary 4.13.4]{Lin18}} 
%\cite[II Corollary 2.8]{Lan83}}
\\
\cong\, &
\underline{\Hom}_{kH}(f(45'){\uparrow}_N^H, 6) 
\ \text{ by (iv)}%Lemma~\ref{Res-Ind_45-45'}(iv)}
\\
\cong\,&
\underline{\Hom}_{kN}(f(45'), 6{\downarrow}^H_N) 
%\\ & \text{ by Nakayama-Frobenius reciprocity}
\\ \cong\, & 
\underline{\Hom}_{kN}(f(45'), U(3_1,2^*,k_N)\,)
\text{\ \ \ from \cite[(1.3)Proposition (iii)]{Kos87}}
\\ 
\cong\, &
\underline{\Hom}_{kN}(f(45'), U(2^*,k_N)\,)
\text{ by \cite[Theorem (ii)]{KW99b}}
\\
\cong\, &
\Hom_{kN}(f(45'), U(2^*,k_N)\,)\cong k
\ \text{ (as }k\text{-spalces), by (vii)} %Lemma~\ref{Res-Ind_45-45'}(vii)}.
%\\ =\,& 1 \ \text{ by \cite[Theorem(ii)]{KW99b}}.
\end{align*}
Thus, 
\begin{equation}\label{[X,6]}
[L_1(X),6]^H=1.
\end{equation}
Finally let us look at the case that $S\in\{15, 15^*\}$. Then
\begin{align*}
\Hom_{kH}(X,15^*) \cong&\, \underline{\Hom}_{kH}(X,15^*) 
\text{\ by \cite[Corollary 4.13.4]{Lin18}}
%\cite[II Corollary 2.8]{Lan83} }
\\ =&\,
\underline{\Hom}_{kH}\Big(F(45'), \,F(F^{-1}(15^*))\Big)\ \text{ from the definition of }F
\\ \cong&\, 
\underline{\Hom}_{kG}\Big(45', F^{-1}(15^*)\Big)\ 
\text{ \ \ from Lemma~\ref{stableEqA-B}(i) }
\\ =&\, \underline{\Hom}_{kG}\Big(45', 15^*\Big)
\text{ \ from Lemma~\ref{F(k)F(15)F(15*)}}
\\ =&\,
0,
\end{align*}
that is
\begin{equation}\label{[X,15^*]} [L_1(X), 15^*]^H=0. \end{equation}
Similarly,
\begin{equation}\label{[X,15]} [L_1(X), 15]^H=0. \end{equation}
Now, since $\Irr_k(B) = \{k_H, 3, 3^*, 6, 6^*, 7, 15, 15^*\}$ and since $X$ is a $kH$-module
in $B$, the first assertion follows from
(\ref{[X,k_H]}) $\sim$
%, (\ref{[X,3]}), (\ref{[X,6^*]}), (\ref{[X,7]}), (\ref{[X,3^*]}),
%(\ref{[X,6]}), (\ref{[X,15^*]}) and 
(\ref{[X,15]}). Hence second one follows since $X$ is self-dual.
%%\medskip

(ix)
Set $Y:=f(45'){\uparrow}_N^H$. Then %Lemma~\ref{Res-Ind_45-45'}
(iv) yields that $Y=X\oplus\mathcal P$
with a projective $kH$-module $\mathcal P$.
Now, it follows by the Frobenius-Nakayama reciprocity, \cite[Theorem(ii)]{KW99b} and
\cite[(1.3)Proposition (i)--(vi)]{Kos87} that 
$L_1(Y)=3^*\oplus 6\oplus 15\oplus 15*$. %and hence 
On the other hand, since $L_1(X)=3^*\oplus 6$ by (i),
$L_1(\mathcal P)=15\oplus 15^*$, so that the assertion follows.

%%\medskip
(x)
%$$ Y=X\oplus P(15)\oplus P(15^*).$$
We keep the notation $Y$ as in the proof of (ii).
Then, (ii) says that in the Grothendieck group,
$X=Y-P(15)-P(15^*)$.
We know the composition factors of $f(45')$ by \cite[Theorem(ii)]{KW99b}, and hence
\cite[(1.3)Proposition (vii)--(xi)]{Kos87} tells us
the composition factors of $Y$. 
Further, the Cartan invariants for $P(15)$ and $P(15^*)$ in \cite[(1.1)Lemma]{Kos87}
give us the information of the composition factors of $P(15)$ and $P(15^*)$.
So, the assertion follows.
%(vii) This follows from Lemma~\ref{Q}(ii) and \cite[Chap.4, Theorem~4.10(ii)]{NT88}.
%%\medskip

(xi)
%It follows from the Frobenius-Nakayama reciprocity and 
By \cite[(1.3)Proposition (vii)--(xi)]{Kos87}, for a simple $kN$-module $\widetilde S$
\begin{align*}
[P(3^*){\downarrow}^H_N, \widetilde S]^N = [P(3*), \widetilde S{\uparrow}_N^H]^H&=
    \begin{cases}
1&\text{ for }\widetilde S\in\{1_{kN}, 2^*\}
\\
0&\text{ otherwise}.
    \end{cases}
\end{align*}
Similarly,
\begin{align*}
[P(6){\downarrow}^H_N, \widetilde S]^N = [P(6), \widetilde S{\uparrow}_N^H]^H&=
   \begin{cases}
1&\text{ for }\widetilde S\in\{k_N, 3_1\}
\\
0&\text{ otherwise}.
   \end{cases}
\end{align*}
Thus, the assertion follows.

%%\medskip

(xii)
By (viii), the projective cover $P(3^*)\oplus P(6)=P(X)\twoheadrightarrow X$ exists.
Apply this to the restriction ${\downarrow}^H_N$. By the definitions of the Green correspondence
$f'$ and $X$, we can write $X{\downarrow}^H_N=f(45')\oplus U$ for a $kN$-module $U$.
Thus, (xi) yields that a $kN$-epimorphism
\begin{equation}\label{ResX}
P(k_N)\oplus P(1_{kN})\oplus P(2^*)\oplus P(3_1)\twoheadrightarrow X{\downarrow}^H_N=f(45')\oplus U.
\end{equation}
%Then, since $P(f(45'))\cong P(2^*)$ by \cite[Theorem (ii)]{KW99b}, we do have a 
%$kN$-epimorphism
%\begin{equation}\label{U}
%P(k_N)\oplus P(1_{kN})\oplus P(3_1)\twoheadrightarrow U.
%\end{equation}
Now,  for $\tilde S\in\Irr_k(kN)$, \cite[(1.3)Proposition (vii)--(xi)]{Kos87} implies that
\begin{align*}
[X{\downarrow}^H_N,\tilde S]^N&= [X,\tilde S{\uparrow}^H]^H
\\
&\geq [3^*\oplus 6,\tilde S{\uparrow}^H]^H \text{ \ from (viii)}
\\
&=
   \begin{cases}
1 &\text{ if }\tilde S=1_{kN}, 3_1
\\
0 &\text{ otherwise}.
   \end{cases}
\end{align*}
Then, since $L_1(f(45'))=2^*$ by \cite[Theorem (ii)]{KW99b}, (\ref{ResX}) yields that
\begin{equation}\label{U1}
[U,1_{kN}]^N\geq 1\text{ and }[U,3_1]^N\geq 1.
\end{equation}
On the other hand, by Lemma~\ref{stableEqA-B}(iii), $X\,|\,f(45'){\uparrow}^H_N$,
so that (iii) yields that
$$X{\downarrow}^H_N\ |\ ( f(45'){\uparrow}^H_N){\downarrow}^H_N=f(45')\oplus{\mathrm{(proj)}}.$$
Hence, by (\ref{ResX}), $U$ has to be projective. Thus, $P(1_{kN})\oplus P(3_1)\,|\,U$ by (\ref{U1}).
Then by comparing the $k$-dimensions of the both sides by \cite[Theorem 3.8]{Kos86},
\cite[Theorem (ii)]{KW99b} and (x), we do have.
$U=P(1_{kN})\oplus P(3_1)$. The proof is completed.

%%\medskip

(xiii)
Recall that $\Omega^{-1}(6^*) = I(6^*)/6^*=P(6^*)/6^*$.
Then, 
\begin{align*}
 \Big(\Omega^{-1}(6^*)\Big){\downarrow}^H_N
&= \Omega^{-1}(6^*{\downarrow}^H_N)\oplus{\mathrm{(inj)}}
\\
&= \Omega^{-1}(6^*{\downarrow}^H_N)\oplus{\mathrm{(proj)}}
\\
&= (P(3_1)/U(k_N,2,3_1))%\boxed{\begin{matrix}k_N\\ 2\ \\ 3_1\end{matrix}}\ \Big)
\oplus\text{(proj)}
\text{ \ \ \ from \cite[(1.3)Proposition (iii)]{Kos87}}.
\end{align*}
Further, %it follows from the Frobenius-Nakayama reciprocity and 
for a simple $kN$-module $\widetilde S$,
by \cite[(1.3)Proposition (vii)--(xi)]{Kos87} 
\begin{align*}
[(P(6^*)/6^*){\downarrow}^H_N, \widetilde S]^N = [P(6^*)/6^*, \widetilde S{\uparrow}_N^H]^H&=
\begin{cases}
1&\text{ for }\widetilde S\in\{ k_N, S_1\}
\\
0&\text{ otherwise}.
\end{cases}
\end{align*}
Thus, all the assertions follow.
\end{proof}

\begin{Lemma}\label{mu'} %It holds the following:
\begin{enumerate}
\renewcommand{\labelenumi}{\rm{(\roman{enumi})}}
\item
There is a $kH$-module homomorphism 
$$
\mu': P(6^*) \rightarrow \ P(3^*)\oplus P(6) \text{ \ such that }
{\mathrm{Ker}}(\mu') = 6^*.
$$
(We shall use $\mu'$ in this sense below.)
\item
Set $\iota$ be an $kH$-monomorphism $\iota: P(6^*)/6^* \rightarrowtail P(3^*)\oplus P(6)$
induced by $\mu'$, that is,
$\mu'=\iota\circ\pi$ where $\pi$ is the canonical epimorphism 
$\pi:P(6^*){\twoheadrightarrow}P(6^*)/6^*$.
Further, let $\pi_X$ be the $kH$-epimorphism defined by the projective cover of $X$, namely,
$P(3^*)\oplus P(6) = P(X) \overset{\pi_X}{\twoheadrightarrow} X$,
and hence there exists a sequence of $kH$-modules
$$
X^\bullet(45'): \ \ 
\Big[P(6^*)\ \overset{\mu'}{\longrightarrow} \ P(3^*)\oplus P(6)\ 
\overset{\pi_X}{\twoheadrightarrow}X\Big].
$$
Furthermore, there is a sequence of $kN$-modules
\begin{align*}
\Big(X^\bullet(45')\Big){\downarrow}^H_N
= \ 
\Big[P(3_1)\oplus P(k_N) 
&\overset{\mu'{\downarrow}_N}{\longrightarrow} P(2^*)\oplus P(k_N)\oplus P(1_{kN})\oplus P(3_1)
\\
&\overset{\pi_X{\downarrow}_N}{\twoheadrightarrow} f(45')\oplus P(1_{kN})\oplus P(3_1)\Big].
\end{align*}
(We shall use $\iota$ and $\pi_X$ in this sense below.)
\item
We may assume that
$$ 
{\mu'{\downarrow}_N}(P(3_1))\subseteq P(2^*)\text{ \ \ and \ \  }
{\pi_X{\downarrow}_N}(P(2^*))\subseteq f(45')
$$
namely, we can consider that there is a sequence of $kN$-modules
\begin{align*}
\mathfrak Y^\bullet :
&=\Big[P(3_1) \overset{\mu'{\downarrow}_N}{\longrightarrow} P(2^*)
\overset
{\pi_X{\downarrow}_N}{\twoheadrightarrow} f(45')\Big]
\\
&\text{ with } {\mu'}{\downarrow}_N= 
\Big(L\overset{\iota{\downarrow}_N}{\subseteq} P(2^*) \Big)
\circ
\Big(P(3_1)\overset{\pi}{\twoheadrightarrow}L\Big)
%\boxed{\begin{matrix}k_N\\ 2\ \\ 3_1\end{matrix}} 
\\
&\text{ where }L:=P(3_1)/U(k_N,2,3_1)\text{ and }\pi\text{ is the canonical onto map.}
%\overset{\iota{\downarrow}_N}{\subseteq} P(2^*).
\end{align*}
%We can consider that $\mu'{\downarrow}_N\Big(P(3_1)\Big)\subseteq P(2^*)$ in 
%$\Big(\mathfrak X(45')\Big){\downarrow}^H_N$ (cf. {\rm{(iii)}}), and hence
%we can consider that 
%$$
%\Big(\mathfrak X(45')\Big){\downarrow}^H_N
%\ = \ 
%\Big[P(3_1) \overset{\mu{\downarrow}_N}{\longrightarrow} P(2^*)
%                       \overset{\pi_X{\downarrow}_N}{\longrightarrow} f(45')\Big].
%$$
\item
$\pi_X\circ\mu'=0$.
%%\bigskip
\item
${\mathrm{Im}}\,\iota={\mathrm{Im}}\,\mu' \subseteq {\mathrm{Ker}}(\pi_X)$,
that is,
$X^\bullet(45')$ in (ii) is a complex of $kH$-modules in $B$.
\item
$\Big(P(3^*)\oplus P(6)\Big)/{\mathrm{Im}}\,\mu' 
= \Big(P(3^*)\oplus P(6)\Big)/\Big(P(6^*)/6^*\Big)\cong 15^*$.
\item
$$
\begin{tikzpicture}
\draw 
(-1.3,-1.8)node[left](){$\Omega F(45)=$}
(0,0) node[above](){$6$}--(0.3,-0.3)node[below](3){\quad $3$}
(0.6,-0.7)--(0.9,-1)node[below](7){$7$}
(-0.1,0)--(-1.1,-1)node[below](){\ $6^*$}
(-1.1,-1.4)--(-1.1,-1.7)node[below](){$6$}
(0.4,-0.7)--(-0.9,-2)
(0.9,-1.4)--(0.9,-1.8)node[below](){$k$}
(0.7,-1.3)--(0.2,-1.8)node[below](){$3^*$  }
(-1.1,-2.1)--(-1.1,-3.3)
    %(-0.9,-2.6)--(-0.1,-2)
(0.9,-2.1)--(0.9,-2.4)node[below]{$7$}
     %(-1.1,-2.8)--(-0.1,-3.9)node[below]{$6$}
(0.9,-2.9)--(0.9,-3.3)node[below]{$3$}
     %(0.1,-4)--(0.3,-3.7)
     %(-0.9,-2.1)--(0.2,-3.3)
(-1,-1.3)--(-0.1,-1.9)
(0.3,-2.1)--(0.8,-2.5)
(0,-2.1)--(-1,-3.3)node[below](){\ $6^*$ \, }
(2,0)node[above](){$15$}--(1.1,-1.9)
(-0.9,-2.1)--(0.7, -3.5)
(3.1,-1.8)node[left](){and}
;
\end{tikzpicture}
\begin{tikzpicture}
\draw 
(-1.7,-1.8)node[left](){$\Omega F(45')=$}
(0,0) node[above](){$6^*$}--(0.3,-0.3)node[below](3){\quad $3^*$}
(0.6,-0.7)--(0.9,-1)node[below](7){$7$}
(-0.1,0)--(-1.1,-1)node[below](){\ $6$}
(-1.1,-1.4)--(-1.1,-1.7)node[below](){$6^*$}
(0.4,-0.7)--(-0.9,-2)
(0.9,-1.4)--(0.9,-1.8)node[below](){$k$}
(0.7,-1.3)--(0.2,-1.8)node[below](){$3$  }
(-1.1,-2.1)--(-1.1,-3.3)
    %(-0.9,-2.6)--(-0.1,-2)
(0.9,-2.1)--(0.9,-2.4)node[below]{$7$}
     %(-1.1,-2.8)--(-0.1,-3.9)node[below]{$6$}
(0.9,-2.9)--(0.9,-3.3)node[below]{$3^*$}
     %(0.1,-4)--(0.3,-3.7)
     %(-0.9,-2.1)--(0.2,-3.3)
(-1,-1.3)--(-0.1,-1.9)
(0.3,-2.1)--(0.8,-2.5)
(0,-2.1)--(-1,-3.3)node[below](){\ $6$ \, }
(2,0)node[above](){$15^*$}--(1.1,-1.9)
(-0.9,-2.1)--(0.7, -3.5)
;
\end{tikzpicture}
$$
\end{enumerate}
\end{Lemma}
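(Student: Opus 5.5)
The map $\mu'$ will be built from the socle structure of $X$, and everything else will follow by working in the stable category and restricting to $N$.

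\emph{Step (i).} By Lemma~\ref{Res-Ind_45-45'}(viii), $S^1(X)=3\oplus 6^*$, so $6^*$ embeds in $X$. Since $P(X)=P(3^*)\oplus P(6)$, the kernel $\Omega(X)=\Ker(\pi_X)$ is a submodule of $P(3^*)\oplus P(6)$. I will show $\Hom_{kH}(6^*,\Omega X)\neq 0$; for instance, by Lemma~\ref{Res-Ind_45-45'}(vi), $\Omega X\cong F(\Omega 45')$ up to projectives, so $[6^*,\Omega X]^H$ can be computed stably through $N$ via Green correspondence and \cite[(1.3)Proposition]{Kos87}. Since $P(3^*)\oplus P(6)$ is injective, a simple submodule $6^*\subseteq\Omega X\subseteq P(3^*)\oplus P(6)$ extends to a map from its injective hull $I(6^*)=P(6^*)$. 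I will choose the extension so that its kernel is exactly $6^*$, namely by extending instead the monomorphism $P(6^*)/6^*\cong\Omega^{-1}(6^*)\rightarrowtail\Omega X$ and composing with $\pi$.

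Concretely, the plan is to show that $\Omega^{-1}(6^*)$ embeds in $\Omega X$. The key point is that both modules can be compared after restriction to $N$. Lemma~\ref{Res-Ind_45-45'}(xiii) gives
\[
\Omega^{-1}(6^*){\downarrow}_N\cong L\oplus P(k_N), \qquad L:=P(3_1)/U(k_N,2,3_1).
\]
Lemma~\ref{Res-Ind_45-45'}(xii) gives $X{\downarrow}_N=f(45')\oplus P(1_{kN})\oplus P(3_1)$. By \cite[Theorem (ii)]{KW99b}, $\Omega f(45')$ should contain $L$ inside $P(2^*)$. Using Lemma~\ref{GreenHeart}-type compatibility of $\Omega$ with Green correspondence, I will lift this to a map $\Omega^{-1}(6^*)\to\Omega X$ that is injective on socles, hence injective, since the socle of $\Omega^{-1}(6^*)$ can be read off from $P(6^*)$ in Lemma~\ref{P(6)-P(15)}(ii). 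This gives (i) and (ii), and the restricted sequence in (ii) then follows from Lemma~\ref{Res-Ind_45-45'}(xi),(xii),(xiii).

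\emph{Steps (iii)--(v).} Part (iii) comes from matching indecomposable summands with Lemma~\ref{ProjCover}: $L$ has projective cover $P(3_1)$ and lands in the summand $P(2^*)$ covering $f(45')$, while the $P(k_N)$ summands correspond. Parts (iv) and (v) are immediate, because $\Im\mu'$ was constructed inside $\Omega X=\Ker\pi_X$.

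\emph{Step (vi).} This is a dimension and composition-factor count. Subtract the composition factors of $P(6^*)/6^*$ (Lemma~\ref{P(6)-P(15)}(ii)) from those of $P(3^*)\oplus P(6)$ (Cartan invariants, \cite[Theorem 1]{Kos87}). Then compare the result with $\Omega X$, whose composition factors are known from Lemma~\ref{Res-Ind_45-45'}(x). The quotient $(P(3^*)\oplus P(6))/\Im\mu'$ has $X$ as a quotient with kernel $\Omega X/\Im\iota$. This forces $\Omega X/\Omega^{-1}(6^*)\cong 15^*$, and in the statement the relevant cokernel is identified accordingly.

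\emph{Step (vii).} This reads off $\Omega F(45')=\Omega X$ as the extension of $15^*$ on top of $P(6^*)/6^*$, whose Loewy picture is the one in Lemma~\ref{P(6)-P(15)}(ii) with socle removed. One then checks that the $15^*$ hangs over $3^*$ via $L_2(P(15^*))$ (Lemma~\ref{P(6)-P(15)}(i)). The version for $\Omega F(45)$ follows by applying $\tau$ together with Lemma~\ref{stableEqA-B}(iv).

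The main obstacle is the injectivity in (i): showing that $\Omega^{-1}(6^*)$, and not merely its socle $6^*$, maps injectively into $\Omega X$. I would attack this through the restriction to $N$, where all the structures are explicit.
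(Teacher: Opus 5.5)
\textbf{There is a gap at Step (i), which is where your plan places the whole difficulty.}

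First, the opening claim $\Hom_{kH}(6^*,\Omega X)\neq 0$ is false. Since $\Omega X\subseteq P(3^*)\oplus P(6)$, we have $\soc(\Omega X)\subseteq 3^*\oplus 6$, so $6^*$ is not a submodule of $\Omega X$. Also, extending an embedding of a simple socle summand to $P(6^*)$ would give an injective map, not one with kernel $6^*$.

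Your corrected target is the right one: a $kH$-monomorphism $P(6^*)/6^*=\Omega^{-1}(6^*)\rightarrowtail\Omega X$. But nothing in your plan produces it.
\begin{itemize}
\item Lemma~\ref{GreenHeart} and the Green correspondence match modules ($f'(\Omega X)\cong\Omega f(45')$), not homomorphisms.
\item Restriction $\Hom_{kH}\to\Hom_{kN}$ is injective but not surjective. So a $kN$-embedding $L\subseteq\Omega f(45')$, which you only say ``should'' hold, does not lift.
\item Frobenius reciprocity does turn a $kN$-map $L\oplus P(k_N)\to\Omega f(45')$ into a $kH$-map $\Omega^{-1}(6^*)\to(\Omega f(45')){\uparrow}^H\cong\Omega X\oplus(\mathrm{proj})$. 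However, you give no way to see that its component into $\Omega X$ is nonzero on both socle summands $3^*$ and $6$.
\item Passing to stable Hom cannot help, because injectivity is not a stable property.
\end{itemize}
So ``lift to a map injective on socles'' is exactly the unproved point, and your (iv)--(vii) rest on it.

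\textbf{How the paper handles this.} It uses $N$ in the opposite direction, where restriction is actually useful.

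Part (i) is immediate from Lemma~\ref{P(6)-P(15)}(ii). We have $S^2(P(6^*))/S^1(P(6^*))\cong 3^*\oplus 6$, so the injective hull of $P(6^*)/6^*$ is $P(3^*)\oplus P(6)$. One takes $\mu'=\iota\circ\pi$ for such a hull.

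The real content is then (iv): the \emph{vanishing} of the already-given $kH$-map $\pi_X\circ\mu'$. For a vanishing statement, restriction to $N$ suffices because restriction is faithful. After arranging the decompositions as in (iii), the relevant piece is $L\to P(2^*)\to f(45')$. A nonzero image of $L$, whose top is $3_1$, would make $3_1$ a composition factor of $f(45')$, contradicting \cite[Theorem (ii)]{KW99b}.

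\textbf{Your Steps (vi)--(vii).} Once $\Im\mu'\subseteq\Omega X$ is known, these coincide with the paper's argument. You are also right that the meaningful form of (vi) is $\Omega X/\Im\mu'\cong 15^*$, and this is what the paper's proof of (vii) uses. The quotient of all of $P(3^*)\oplus P(6)$ by $\Im\mu'$ surjects onto $X$, which has dimension $72$, so that quotient cannot be $15^*$.
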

\begin{proof}
(i) Follows by \cite[Theorem 1]{Kos87}.

(ii) Follows by Parts (viii), (xi), (xii) and (xiii) of Lemma~\ref{Res-Ind_45-45'}.

%\medskip

(iii)
First, it follows from \cite[(1.3)Proposition]{Kos87} and \cite[Theorem~3.8]{Kos86} that
$$ {\mathrm{Im}}(\mu'{\downarrow}_N)\subseteq P(2^*)\oplus P(k_N)$$
though $(P(3^*)\oplus P(6)){\downarrow}_N=(P(1_{kN})\oplus P(2^*))\oplus (P(3_1)\oplus P(k_N))$.
Further, recall that 
$X{\downarrow}_N=f(45')\oplus P(1_{kN})\oplus P(3_1)$ by (iii).
Hence, Lemma~\ref{ProjCover}(ii) and the definitions of projective covers and injective hulls
imply that we can exclude from $\Big(X^\bullet(45')\Big){\downarrow}^H_N$
the projective modules $P(k_N)$\,s in the 1st and 2nd terms,
and also $P(1_{kN})$\,s, $P(3_1)$\,s in the 2nd and the 3rd terms.
Hence we get the desired sequence $\mathfrak Y^\bullet$.

%\medskip

%%%%\newpage
(iv)
%By the original definition of $\mu'$ in (i), 
%$$P(6*)/6^* \subseteq P(3^*)\oplus P(6),$$
%and hence 
%$$L:=P(3_1)\,\Big/\,\boxed{\begin{matrix}k_N\\ 2\ \\ 3_1\end{matrix}} \ \ \subseteq \ P(2^*).$$
%Now, 
Assume that $\pi_X\circ\mu'\,{\not=}\,0$. Then of course
$\pi_X{\downarrow}_N\circ\mu'{\downarrow}_N\,{\not=}\,0$. 
Then, since $P(f(45'))=P(2^*)$ by \cite[Theorem (ii)]{KW99b},
it follow from Lemma~\ref{ProjCover} that $(\pi_X{\downarrow}_N)(P(k_N))=0$.
Hence again it follows from Lemma~\ref{ProjCover}, (ii) and (iii) that we can consider that
\linebreak
$(\pi_X{\downarrow}_N)\circ(\mu'{\downarrow}_N)(P(3_1))\,{\not=}\,0.$ 
Namely,
$$
 0 \,{\not=}\,(\pi_X{\downarrow}_N\circ\mu'{\downarrow}_N)(P(3_1))
=(\pi_X{\downarrow}_N\circ\iota)(L)=\pi_X\Big(\iota(L)\Big)=:\mathfrak L
$$
where $L$ is the same as in (iii).
Since $\iota$ is injective and since $L_1(L)=3_1$, and since $\pi_X$ is surjective,
$c_{\mathfrak L}(3_1)\,{\not=}\,0$. 
Then, since $\mathfrak L\subseteq f(45')$, $c_{f(45')}(3_1)\,{\not=}\,0$,
contradicting \cite[Theorem (ii)]{KW99b}.
Therefore, the assertion follows.

%\medskip

(v) Obvious by (iv).

(vi) Follows from the definition of $\mu'$.

(vii)
By (iv), ${\mathrm{Im}}\,\mu'\subseteq\Omega X$. Hence (vi) implies that
$\Omega X/\Big(P(6^*)/6^*\Big)\cong 15^*$. Thus, the pictures of $P(15^*)$ and $P(6^*)$
in Lemma~\ref{P(6)-P(15)} yield the assertion.
\end{proof}

\begin{Notation}\label{Notation5}
From now on till the end of this paper we shall use the notation $\mu'$, $\pi_X$, $\iota$
and $X^\bullet(45')$
as in Lemma~\ref{mu'}.
\end{Notation}

%%%%\newpage
\section{Simple objects in ${\mathrm{D}}^b({\mathrm{mod}}\text{-}B)$}

\begin{comment}
%%\bigskip
\begin{table}[h]
\caption{Example}
\label{table1}
\begin{center}
\begin{tabular}{c | cccc}
$*$ & $1$ & $3$ & $5$ & $7$\\
\hline
1 & 1 & 3 & 5 & 7 
\end{tabular}
\end{center}
\end{table}
%%\bigskip
\end{comment}

\begin{Notation}\label{Notation4}
Throughout this section we use the following notation: Set
\\
$\mathcal C:={\mathrm{C}}^{\mathrm{b}}(\text{mod-}B)$,
$\mathcal D:={\mathrm{D}}^{\mathrm{b}}(\text{mod-}B)$,
$\mathcal K:={\mathrm{K}}^{\mathrm{b}}(\text{mod-}B)$,
%$\mathcal K_{\mathrm{acy}}:={\mathrm{K}}^{\mathrm{b}}(\text{mod-}B)_{\mathrm{acy}}$
%that is a faithful full subcategory of $\mathcal K$ whose objects are
%all objects of $\mathcal K$ which are {\sf acyclic},
%and hence 
%\begin{equation}\label{DerivedAcyclic}
%\mathcal D\cong\mathcal K/\mathcal K_{\mathrm{acy}}
%\end{equation} 
%as well known
%since $\text{mod-}B$ is an idempotent and exact category
%see \cite[1.3 p.280]{Ric01}). 
%%and \cite[Definition 7.1.15]{Nak15}).
%Further set
$\mathfrak K:={\mathrm{K}}^{\mathrm{b}}(\text{proj-}B)$.
\\
%, then recall Rickard's theorem
%(see \cite[Theorem 2.1]{Ric89})
%\begin{equation}\label{Ric89Stable}
%\underline{\text{mod}}\text{-}B\cong\mathcal D/\mathfrak K\ .
%\end{equation}
%
We use the notation $X^\bullet(S)$ for $S\in\Irr_k(A)$ which are defined in 
Lemma~\ref{complex} below. 
\end{Notation}

\begin{Lemma}\label{complex} %It holds the following:
%\renewcommand{\labelenumi}{\rm{(\roman{enumi})}}
%\begin{enumerate}
%\item
The functor $F:{\mathrm{mod}}\text{-}A\rightarrow{\mathrm{mod}}\text{-}B$
produces the following complexes.
\begin{table}[h]
\vspace{-1cm}
\caption*{}
%\label{table2}
\centering
 \begin{tabular}{l |ccccccccc| l}
{\rm degree} &$-3$& &$-2$ && $-1$ && $0$ &&$1$& ${\mathrm{H}}^i:={\mathrm{H}}^i(X^\bullet(S))$ \\
 \hline
&&&&&&&&& \\
$X^\bullet(k_G)$ & $0$&$\rightarrow$& $0$ &$\rightarrow$& $0$ &$\rightarrow$& $F(k_G)$ &
     $\rightarrow$&$0$& ${\mathrm{H}}^0=k_H$ \\ 
$X^\bullet(34)$ & $0$&$\rightarrow$&$0$ &$\rightarrow$& $0$ &$\rightarrow$& $F(34)$ &
     $\rightarrow$&$0$& ${\mathrm{H}}^0=7$ \\ 
$X^\bullet(15)$ & $0$&$\rightarrow$&$0$ &$\rightarrow$& $0$ &$\rightarrow$& $F(15)$ &
     $\rightarrow$&$0$& ${\mathrm{H}}^0=15$ \\   
$X^\bullet(15^*)$ & $0$&$\rightarrow$&$0$ &$\rightarrow$& $0$ &$\rightarrow$& $F(15^*)$ &
     $\rightarrow$&$0$& ${\mathrm{H}}^0=15^*$ \\     
$X^\bullet({10})$ & $0$&$\rightarrow$&$0$ &$\rightarrow$& $P(6)$ 
&$\twoheadrightarrow$& $F({10})$ &
     $\rightarrow$&$0$& ${\mathrm{H}}^{-1}=U(k_H,7,3,6)$ \\  
$X^\bullet(10')$ & $0$&$\rightarrow$&$0$ &$\rightarrow$& $P(6^*)$ 
&$\twoheadrightarrow$& $F(10')$ &
     $\rightarrow$&$0$& ${\mathrm{H}}^{-1}=U(k_H,7,3^*,6^*)$ \\   
$X^\bullet(45)$ & $0$&$\rightarrow$&$P(6)$ &$\rightarrow$& $P(3)\oplus P(6^*)$ 
&$\twoheadrightarrow$& $F(45)$ &
     $\rightarrow$&$0$& ${\mathrm{H}}^{-2}=6, \ \  {\mathrm{H}}^{-1}=15$ \\   
$X^\bullet(45')$ & $0$&
$\rightarrow$
&$P(6^*)$ &$\overset{\mu'}{\rightarrow}$& $P(3^*)\oplus P(6)$ 
&$\twoheadrightarrow$& $F(45')$ &
     $\rightarrow$&$0$& ${\mathrm{H}}^{-2}=6^*, \ {\mathrm{H}}^{-1}=15^*$   
\end{tabular}
%\vspace{0.3cm}
% \caption{}%The complexes}
 \label{complexes}
\end{table}
\end{Lemma}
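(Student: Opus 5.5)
This is essentially a row-by-row check of the table, taking the complexes already built as given and computing their cohomology. We treat the rows in three groups.

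\emph{First four rows.} By Lemma~\ref{F(k)F(15)F(15*)} and Lemma~\ref{F(34)} we have $F(k_G)=k_H$, $F(15)=15$, $F(15^*)=15^*$ and $F(34)=7$. Each of these complexes is concentrated in degree $0$, so its only cohomology is $\mathrm H^0=F(S)$, which is the stated simple module.

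\emph{Rows for $10$ and $10'$.} These are exactly the complexes $X^\bullet(10)$ and $X^\bullet(10')$ of Lemma~\ref{X(10')}, shifted so that the projective term sits in degree $-1$. Since $\partial^{-1}$ is the canonical epimorphism $P(6)\twoheadrightarrow F(10)$, the degree-$0$ cohomology vanishes. The degree-$(-1)$ cohomology is $\Ker\partial^{-1}$, read off by comparing the picture of $F(10)$ in Lemma~\ref{F(10')} with $P(6)$ in Lemma~\ref{P(6)-P(15)}(ii). Concretely, $F(10)$ has top $6$ and is a quotient of $P(6)$. Removing its composition factors from $P(6)$ leaves the uniserial piece $U(k,7,3,6)$ sitting at the bottom of the $3$--$7$--$k$ branch and the socle. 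The case of $10'$ follows by applying $\tau$ and (\ref{sigma-tau}).

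\emph{Row for $45'$.} Here we use $X^\bullet(45')$ from Lemma~\ref{mu'}(ii), with $P(6^*)$ placed in degree $-2$. We compute the three cohomology groups in turn.
\begin{itemize}
\item[(a)] $\mathrm H^0=0$ because $\pi_X$ is surjective.
\item[(b)] $\mathrm H^{-2}=\Ker\mu'=6^*$ by Lemma~\ref{mu'}(i).
\item[(c)] $\mathrm H^{-1}=\Ker\pi_X/\Im\mu'=\Omega X/\iota(P(6^*)/6^*)$, using Lemma~\ref{mu'}(v) to know that $\Im\mu'\subseteq\Ker\pi_X$.
\end{itemize}
The quotient in (c) is the main point. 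By Lemma~\ref{mu'}(vi), $(P(3^*)\oplus P(6))/\Im\mu'\cong 15^*$ already. That module surjects onto $X$ with kernel $\mathrm H^{-1}$. So the identification $\mathrm H^{-1}\cong 15^*$ must be read from (vii): there $\Omega X/(P(6^*)/6^*)\cong 15^*$.

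This is where care is needed, and we would verify it by a dimension count. From Lemma~\ref{Res-Ind_45-45'}(x), $\dim X=72$. The dimension of $P(3^*)\oplus P(6)$ comes from the Cartan data in \cite{Kos87}. Then we check that
\[
\dim\Omega X-\bigl(\dim P(6^*)-6\bigr)=15 .
\]
Since $15^*$ is simple, this gives $\mathrm H^{-1}\cong 15^*$ once (vii) is in place.

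\emph{Row for $45$.} We apply $\tau$ to $X^\bullet(45')$. By Lemma~\ref{stableEqA-B}(iv) and (\ref{sigma-tau}), $F(45)=F(45')^\tau$, and $\tau$ sends $P(6^*)\mapsto P(6)$, $P(3^*)\oplus P(6)\mapsto P(3)\oplus P(6^*)$ and $15^*\mapsto 15$. This yields the last row with $\mathrm H^{-2}=6$ and $\mathrm H^{-1}=15$.

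The main obstacle is making sure that the inclusion $\Im\mu'\subseteq\Ker\pi_X$ from Lemma~\ref{mu'}(iv)--(v) really makes $X^\bullet(45')$ a complex, and that the quotient $\Omega X/\Im\iota$ is exactly $15^*$ rather than merely having $15^*$ as a composition factor. Both are handled by Lemma~\ref{mu'}(iv)--(vii), so the remaining work is bookkeeping.
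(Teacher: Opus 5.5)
Your proposal is correct and follows the paper's own argument: each row is read off from Lemmas~\ref{F(k)F(15)F(15*)}, \ref{F(34)}, \ref{X(10')} and \ref{mu'}, and the rows for $10$ and $45$ come from the primed ones via $\tau$ and (\ref{sigma-tau}). The one slip is in step (c), where Lemma~\ref{mu'}(vi) cannot be read literally, since a $15$-dimensional module cannot surject onto the $72$-dimensional $X$; what you actually need is $\Ker\pi_X/\Im\mu'\cong 15^*$ from the proof of Lemma~\ref{mu'}(vii), and to check this independently you should compare composition factors, $[\Omega X]-[P(6^*)]+[6^*]=[15^*]$ (with $[\Omega X]=[P(3^*)]+[P(6)]-[X]$ from the Cartan data of \cite{Kos87} and Lemma~\ref{Res-Ind_45-45'}(x)), rather than dimensions, because dimension $15$ alone does not single out $15^*$.
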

\begin{proof}
Follows from Lemmas~\ref{F(k)F(15)F(15*)}, \ref{F(34)} and \ref{mu'}(iv), \ref{mu'}(ii) 
and %also by making use of $\tau\in\Out(H)$ in 
(\ref{sigma-tau}).
\end{proof}

\begin{Lemma}\label{F(S)simple}
For $S, S'\in\{k_G,34,15,15^*\}$,
$\dim_k\,\Hom_{\mathcal D}(X^\bullet(S), X^\bullet(S'))=\delta_{S,S'}$.
\end{Lemma}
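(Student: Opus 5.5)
For $S\in\{k_G,34,15,15^*\}$ the complex $X^\bullet(S)$ in Lemma~\ref{complex} is concentrated in degree $0$, with term $F(S)$. By Lemmas~\ref{F(k)F(15)F(15*)} and \ref{F(34)}, $F(S)$ is the simple $B$-module $k_H$, $7$, $15$ or $15^*$ respectively. The plan is therefore to reduce the statement to a Schur's lemma computation in $\mod-B$.

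First, I would use the standard fact that $\mod-B$ embeds fully faithfully in $\mathcal D$ as the complexes concentrated in degree $0$. For modules $M,N$ viewed as stalk complexes in degree $0$ this gives
$$\Hom_{\mathcal D}(M,N)\cong\Hom_B(M,N).$$
If one prefers to stay within the tools of Section~3, the same identification follows from Lemma~\ref{Zimmer-6.6.2}. Both complexes have $\H^1=0$, so condition (b) holds trivially and $\Ext^2_B(0,-)=0$. The pull-back square then collapses to $\Hom_{\mathcal D}(X^\bullet(S),X^\bullet(S'))\cong\Hom_B(\H^0(X^\bullet(S)),\H^0(X^\bullet(S')))$.

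Second, I would record that $S\mapsto F(S)$ sends the four modules $k_G,34,15,15^*$ to the four pairwise non-isomorphic simple $B$-modules $k_H,7,15_{kH},{15^*}_{kH}$. These are distinct members of $\Irr_k(B)$ in Notation~\ref{Notation1}, so $F$ is injective on this set.

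Third, since $k$ is algebraically closed, Schur's lemma gives $\dim_k\Hom_B(T,T')=\delta_{T,T'}$ for simple $B$-modules $T,T'$. Combined with the first two steps this yields $\dim_k\Hom_{\mathcal D}(X^\bullet(S),X^\bullet(S'))=\delta_{S,S'}$.

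There is no real obstacle here. The only point needing care is the identification of morphisms in $\mathcal D$ between stalk complexes in the same degree with module homomorphisms, and this is standard: either the full embedding of $\mod-B$ into $\mathcal D$, or Lemma~\ref{Zimmer-6.6.2} with vanishing $\H^1$, as above.
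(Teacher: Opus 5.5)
Your proof is correct and takes essentially the paper's route: each $X^\bullet(S)$ is a stalk complex on one of the simple $B$-modules $k_H,7,15,15^*$, so the computation reduces to Schur's lemma. The only difference is where Schur is applied. You apply it directly in $B$, using that the four images are distinct members of $\Irr_k(B)$. The paper instead passes through $\Hom_B(F(S),F(S'))\cong\underline{\Hom}_B(F(S),F(S'))\cong\underline{\Hom}_A(S,S')\cong\Hom_A(S,S')$ via the stable equivalence and applies Schur in $A$. You also make explicit the identification of $\Hom_{\mathcal D}$ between stalk complexes with $\Hom_B$, which the paper uses without comment.
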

\begin{proof}
From Lemmas~\ref{F(k)F(15)F(15*)} and \ref{F(34)},
$F(S)$ and $F(S')$ are simple $B$-modules. Hence,
\begin{align*}
&\Hom_B(F(S),F(S'))\cong\underline{\Hom}_B(F(S),F(S'))
\text{ by \cite[II Corollary 2.8]{Lan83})}
\\
\cong&\,\underline{\Hom}_A(S,S') \text{ by Lemma~\ref{stableEqA-B}(i)}
\\
\cong&\,\Hom_A(S,S') \text{ by \cite[II Corollary 2.8]{Lan83})}
\\
\cong&\,\delta_{S,S'}\times k \ (\text{as }k\text{-spaces})
\text{ by Schuer's Lemma}.
\end{align*}
\end{proof}

\begin{Lemma}\label{F(k)vsF(45)}
For $S\in\{k_G,34,15,15^*\}$ and $S'\in\{45,45'\}$
\[
\Hom_{\mathcal D}(X^\bullet(S),X^\bullet(S'))=0
\text{ and }\, \Hom_{\mathcal D}(X^\bullet(S'),X^\bullet(S)).
\]
\end{Lemma}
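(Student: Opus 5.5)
By the symmetry (\ref{sigma-tau}) together with Lemma~\ref{stableEqA-B}(iv) and Lemma~\ref{F(k)F(15)F(15*)}, applying $\tau$ to all complexes interchanges $X^\bullet(45)$ and $X^\bullet(45')$ and permutes $\{X^\bullet(k_G),X^\bullet(34),X^\bullet(15),X^\bullet(15^*)\}$. It therefore suffices to treat $S'=45'$. Write $X^\bullet:=X^\bullet(45')$. By Lemma~\ref{complex}, $X^\bullet(S)$ is the simple module $T:=F(S)\in\{k_H,7,15,15^*\}$ concentrated in degree $0$. The complex $X^\bullet$ lives in degrees $-2,-1,0$, with $\H^{-2}(X^\bullet)=6^*$, $\H^{-1}(X^\bullet)=15^*$ and $\H^0(X^\bullet)=0$; the last holds because $\pi_X$ is onto.

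The plan is to use the truncation triangles in $\mathcal D$. From the canonical truncation we get a distinguished triangle
\[
6^*[2]\longrightarrow X^\bullet\longrightarrow 15^*[1]\longrightarrow 6^*[3].
\]
Apply $\Hom_{\mathcal D}(-,T)$ and $\Hom_{\mathcal D}(T,-)$ to it. Then $\Hom_{\mathcal D}(X^\bullet,T)$ sits between $\Hom_{\mathcal D}(15^*[1],T)=\Ext^{-1}_B(15^*,T)=0$ and $\Hom_{\mathcal D}(6^*[2],T)=\Ext_B^{-2}(6^*,T)=0$, since negative Exts vanish. Hence $\Hom_{\mathcal D}(X^\bullet,T)=0$ immediately.

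The other direction is the real work. Here $\Hom_{\mathcal D}(T,X^\bullet)$ sits in the exact sequence
\[
\Ext^2_B(T,6^*)\longrightarrow\Hom_{\mathcal D}(T,X^\bullet)\longrightarrow\Ext^1_B(T,15^*)\overset{\delta}{\longrightarrow}\Ext^3_B(T,6^*),
\]
so the triangle alone is not conclusive. Instead I would compute directly with the projective resolution, or dually by replacing $X^\bullet$ with a complex of projectives. The first two terms of $X^\bullet$ are projective, so its stupid truncation gives the triangle
\[
X\longrightarrow X^\bullet\longrightarrow \bigl[P(6^*)\to P(3^*)\oplus P(6)\bigr]\longrightarrow X[1],
\]
where $X=F(45')$ sits in degree $0$ and the bracketed complex occupies degrees $-2,-1$. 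For the bracketed complex, the value of $\Hom_{\mathcal D}(T,-)$ is computed in the homotopy category, since $B$ is self-injective and the terms are injective. A map $T\to$ a complex concentrated in degrees $\le -1$ is nullhomotopic-free only in degree $0$, which is empty, so these Homs vanish. The same holds after shifting by $[-1]$: that group is $\Hom_{\mathcal K}(T,P(3^*)\oplus P(6))$ modulo maps factoring through $\mu'$, landing in the socle $3^*\oplus 6$. This is zero since $T\notin\{3^*,6\}$; alternatively, the socle is hit by $\mu'$. Consequently $\Hom_{\mathcal D}(T,X^\bullet)\cong\Hom_B(T,X)$.

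By Lemma~\ref{Res-Ind_45-45'}(viii), $S^1(X)=3\oplus 6^*$, which contains none of $k_H,7,15,15^*$. Hence $\Hom_B(T,X)=0$, completing the second vanishing. I expect the main obstacle to be the bookkeeping in the step identifying $\Hom_{\mathcal D}(T,X^\bullet)$ with $\Hom_B(T,X)$. This requires checking that the connecting maps involving $\Hom_{\mathcal D}(T,\text{projective complex}[\pm1])$ vanish, using injectivity of the projective terms and the socle information from Lemma~\ref{P(6)-P(15)} and Lemma~\ref{Res-Ind_45-45'}(viii).
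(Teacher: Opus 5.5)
Your argument is correct, and its decisive input is the same as the paper's: $S^1(F(45'))=3\oplus 6^*$ (Lemma~\ref{Res-Ind_45-45'}(viii)) contains none of $k_H,7,15,15^*$. Where you differ is in how you pass from $\mathcal D$ down to module maps.

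The paper only looks at chain maps in $\mathcal C$. A chain map $X^\bullet(k_G)\to X^\bullet(45')$ is a single map $k_H\to F(45')$, which vanishes by the socle. The paper then reads off $\Hom_{\mathcal D}=0$ tacitly from $\Hom_{\mathcal C}=0$. The reverse direction is left as ``similar''; there chain maps vanish trivially because $\pi_X$ is onto.

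That step is not automatic. Neither side is a complex of projectives or of injectives. As you observe, the truncation sequence through $\Ext^1_B(T,15^*)$ does not settle it, and for $T=k_H$ this group is nonzero by Lemma~\ref{P(6)-P(15)}(i) and duality.

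Your two triangles supply the missing justification:
\begin{enumerate}
\item The canonical truncation kills $\Hom_{\mathcal D}(X^\bullet(45'),T)$, because $X^\bullet(45')$ has cohomology only in degrees $\le -1$.
\item The brutal truncation uses that $P(6^*)$ and $P(3^*)\oplus P(6)$ are injective. It gives $\Hom_{\mathcal D}(T,X^\bullet(45'))\cong\Hom_B(T,F(45'))$, i.e.\ it proves that here the derived Hom really is computed by chain maps.
\end{enumerate}

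So the paper's version is shorter, while yours actually justifies the identification it relies on. Your version also handles the reverse direction explicitly instead of by ``similarly''.
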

\begin{proof}
Assume first that %{\sf Case 1.} 
$S:=k_G$ and $S':=45'$.
Take any 
%${\mathrm{class}}(f^\bullet)\in$
%$\Hom_{\mathcal K}(X^\bullet(k_G),X^\bullet(45'))$with 
$f^\bullet\in\Hom_{\mathcal C}(X^\bullet(k_G),X^\bullet(45'))$.
Then we have by Lemma~\ref{complex} the diagram
\[
\begin{tikzcd}
X^\bullet(k_G)\arrow[d,"f^\bullet"]:  0\arrow[r] &0\arrow[r]\arrow[d] &0 \arrow[d]\arrow[r,"\partial^{-1}"] 
                   & k_H\arrow[d,"f^0"]\arrow[r,"\partial^0"] &0\\
X^\bullet(45'):
0 \arrow[r]& P(6^*)\arrow[r]&\arrow[r,"\delta^{-1}"]P(3^*)\oplus P(6) &\arrow[r,"\delta^0"] F(45') &0
\end{tikzcd}
\]
Since $k_H\,{\not |}\, S^1(F(45'))$ by Lemma~\ref{Res-Ind_45-45'}(viii),
$f^0=0$, namely $f^\bullet=0$, which yields that
%\linebreak
$\Hom_{\mathcal C}(X^\bullet(k_G),X^\bullet(45'))=0$.
%Thus $\Hom_{\mathcal D}(X^\bullet(k_G),X^\bullet(45'))=0$.
The rest is proved similarly.
\end{proof}

\begin{Lemma}\label{Hom(F(10),F(10))}
For $S, S'\in\{10,10'\}$,
$\dim_k\,\Hom_{\mathcal D}(X^\bullet(S), X^\bullet(S'))=\delta_{S,S'}$.
\end{Lemma}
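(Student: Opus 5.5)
The plan is to reduce morphisms in $\mathcal D$ between the two–term complexes $X^\bullet(10)$ and $X^\bullet(10')$ to homotopy classes of chain maps, and then compute these from the Loewy structures of the modules involved. Each $X^\bullet(S)$ for $S\in\{10,10'\}$ has the form $P\twoheadrightarrow F(S)$ with $P$ projective. Its only cohomology is $\H^{-1}$, the uniserial module $U(k_H,7,3,6)$ (resp.\ $U(k_H,7,3^*,6^*)$) of Lemma~\ref{X(10')}. Hence $X^\bullet(S)\cong \H^{-1}(X^\bullet(S))[1]$ in $\mathcal D$, and
$$\Hom_{\mathcal D}(X^\bullet(S),X^\bullet(S'))\cong\Hom_B\big(\H^{-1}(X^\bullet(S)),\H^{-1}(X^\bullet(S'))\big).$$
This reduction is cleaner than working with homotopy classes directly, so I will use it.

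Write $U:=U(k_H,7,3,6)$ and $U':=U(k_H,7,3^*,6^*)$. Both are uniserial with top $k_H$. The socle of $U$ is $6$ and the socle of $U'$ is $6^*$.

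For $S=S'=10$, any endomorphism of $U$ is determined by the image of a generator. Its image is a quotient of $U$ that embeds in $U$. A proper nonzero quotient of $U$ has top $k_H$, so it would have to sit in $U$ as a submodule with top $k_H$. But $k_H$ occurs in $U$ only as the top composition factor, so no such submodule exists. Every endomorphism is therefore zero or an automorphism. Since $c_U(k_H)=1$, we get $\End_B(U)\cong k$, and the dimension is $1$.

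For $S=10$ and $S'=10'$, a nonzero map $U\to U'$ has image a nonzero quotient of $U$. Its socle must be contained in $\soc(U')=6^*$. The socles of the nonzero quotients of $U$ are $k_H$, $7$, $3$ and $6$, none of which is $6^*$, so the map is zero. The case $S=10'$, $S'=10$ is symmetric, or follows by applying $\tau$ via (\ref{sigma-tau}). This gives $\delta_{S,S'}$ in all four cases.

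The main point to verify is that $X^\bullet(S)$ really is quasi-isomorphic to its shifted cohomology. The argument is that the truncation map $\H^{-1}[1]=\Ker(\partial^{-1})[1]\to X^\bullet(S)$ is a quasi-isomorphism because $\partial^{-1}$ is surjective, which holds by Lemma~\ref{X(10')}.

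A second point to check is that the uniserial descriptions of the cohomology in Lemma~\ref{X(10')} are the ones used above. If they were in doubt, I would instead read the kernels off from $P(6)$ and $P(6^*)$ in Lemma~\ref{P(6)-P(15)} together with $F(10)$ in Lemma~\ref{F(10')}.
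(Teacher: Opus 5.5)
Your proposal is correct and is essentially the paper's argument. The paper applies Lemma~\ref{Zimmer-6.6.2} to $X^\bullet(10)[-1]$ and $X^\bullet(10')[-1]$; since these have cohomology only in degree $0$, the pull-back collapses to $\Hom_B$ between the $\H^{-1}$'s, which is exactly your stalk-complex reduction, and your Loewy/socle checks that $\End_B(U(k_H,7,3,6))\cong k$ and $\Hom_B(U(k_H,7,3,6),U(k_H,7,3^*,6^*))=0$ supply details the paper only asserts as clear.
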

\begin{proof}
First, consider the case that $S=S'=10$. Set $X^\bullet:=Y^\bullet:=X^\bullet(10)[-1]$.
Then, $\H^0(X^\bullet)=U(k_H,7,3,6)=:U$ and $\H^i(X^\bullet)=0$ for any $i\,{\not=}\,0$ by 
Lemma~\ref{complex}.
Thus 
\linebreak
$\Ext_B^j(\H^1(X^\bullet),\H^0(Y^\bullet))=0$ for $j\in\{1,2\}$ since $\H^1(X^\bullet)=0$.
Clearly, $\Hom_B(\H^0(X^\bullet), \H^0(X^\bullet))=\Hom_B(U,U)\cong k$ (as $k$-spaces)
and $\Hom_B(\H^1(X^\bullet), \H^1(X^\bullet))=0$.
Hence we can apply {\color{black}Lemma~\ref{Zimmer-6.6.2}}. Thus,
the homomorphisms $f$ and $g$ in there are, in fact, just
$k\overset{f}{\rightarrow} 0$ and $0\overset{g}{\rightarrow}0$. 
So that the pull-back showing up 
their is $k$ (1-dimensional $k$-space), which means that
the assertion holds for $S=S'=10$.

Next, consider the case that $S=10$ and $S'=10'$. Set $X^\bullet$ as the same as above,
and $Y^\bullet:=X^\bullet(10')[-1]$. Recall that $\H^0(Y^\bullet)=U(k_H,7,3^*,6^*)$ by Lemma~\ref{complex}.
Hence $\Hom_B(\H^0(X^\bullet),\H^0(Y^\bullet))=0$. Thus, just as above,
{\color{black}Lemma~\ref{Zimmer-6.6.2}} implies the assertion.

For the other two cases such that $(S,S')\in\{(10',10'),(10',10)\}$,
the assertion follows from what we have done above and (\ref{sigma-tau}).
\end{proof}

\begin{Lemma}\label{Hom(X(45),X(10))}
For $S\in\{45,45'\}$ and $S'\in\{10,10'\}$,
$$
\Hom_{\mathcal D}(X^\bullet(S),X^\bullet(S')) = 0\text{ and }
\Hom_{\mathcal D}(X^\bullet(S'),X^\bullet(S)) = 0.
$$
\end{Lemma}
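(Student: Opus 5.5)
The plan is to replace each complex by its cohomology, using the truncation triangles in $\mathcal D$. By Lemma~\ref{complex}, $X^\bullet(10)$ has cohomology only in degree $-1$, so $X^\bullet(10)\cong U[1]$ in $\mathcal D$, where $U:=U(k_H,7,3,6)$. The complex $X^\bullet(45')$ has $\H^{-2}=6^*$ and $\H^{-1}=15^*$, so it sits in a distinguished triangle
$$6^*[2]\longrightarrow X^\bullet(45')\longrightarrow 15^*[1]\longrightarrow 6^*[3].$$
By (\ref{sigma-tau}) and Lemma~\ref{stableEqA-B}(iv), applying $\tau$ interchanges the pairs $(45,45')$ and $(10,10')$ together with the corresponding complexes. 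It therefore suffices to treat the pairs $(45',10)$ and $(45',10')$; I describe the case $S'=10$, and the case $10'$ is identical with $U$ replaced by $U(k_H,7,3^*,6^*)$.

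\emph{Step 1: the direction $X^\bullet(45')\to X^\bullet(10)$.} Apply $\Hom_{\mathcal D}(-,U[1])$ to the triangle. Two terms flank the middle one. The first is $\Hom_{\mathcal D}(15^*[1],U[1])=\Hom_B(15^*,U)$, which vanishes because $U$ has no composition factor $15^*$. The second is $\Hom_{\mathcal D}(6^*[2],U[1])=\Ext_B^{-1}(6^*,U)=0$. Exactness of the long exact sequence then gives $\Hom_{\mathcal D}(X^\bullet(45'),X^\bullet(10))=0$. Alternatively, this is Lemma~\ref{Zimmer-6.6.2} applied to $X^\bullet(45')[-2]$ and $X^\bullet(10)[-2]$; condition (b) holds trivially because $\H^0(X^\bullet(10)[-2])=0$.

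\emph{Step 2: the direction $X^\bullet(10)\to X^\bullet(45')$.} Apply $\Hom_{\mathcal D}(U[1],-)$ to the triangle. The two flanking terms are $\Hom_{\mathcal D}(U[1],15^*[1])=\Hom_B(U,15^*)=0$ and the term $\Hom_{\mathcal D}(U[1],6^*[2])=\Ext^1_B(U,6^*)$. So the whole statement reduces to showing
$$\Ext^1_B(U,6^*)=0.$$
This is the main obstacle, and I would handle it through the stable category and the functor $F$.

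\emph{Step 3: computing $\Ext^1_B(U,6^*)$.} First identify $\Omega^{-1}U$. By Lemma~\ref{X(10')}, $U$ is the kernel of the canonical map $P(6)\twoheadrightarrow F(10)$, and $\soc(U)=6$, so $P(6)$ is the injective hull of $U$. Hence $\Omega^{-1}U\cong F(10)$.

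Next identify $\Omega^{-2}(6^*)$. By Lemma~\ref{mu'}(i),(vi), $\Omega^{-1}(6^*)=P(6^*)/6^*$ embeds via $\iota$ into $P(3^*)\oplus P(6)$ with cokernel $15^*$. I would check from the structure of $P(6^*)$ in Lemma~\ref{P(6)-P(15)} that $\soc(P(6^*)/6^*)=3^*\oplus 6$. Then this embedding is an injective hull, and $\Omega^{-2}(6^*)\cong 15^*$.

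Putting these together,
$$\Ext^1_B(U,6^*)\cong\underline{\Hom}_B(U,\Omega^{-1}6^*)\cong\underline{\Hom}_B(\Omega^{-1}U,\Omega^{-2}6^*)\cong\underline{\Hom}_B(F(10),15^*).$$
By Lemma~\ref{F(k)F(15)F(15*)}, $15^*=F(15^*)$. Since $F$ is a stable equivalence (Lemma~\ref{stableEqA-B}(i)), the last space is $\underline{\Hom}_A(10,15^*)$. This is a quotient of $\Hom_A(10,15^*)=0$, because $10$ and $15^*$ are non-isomorphic simples. Hence $\Ext^1_B(U,6^*)=0$, and Step 2 gives $\Hom_{\mathcal D}(X^\bullet(10),X^\bullet(45'))=0$.

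The only point needing care is the socle claim $\soc(P(6^*)/6^*)=3^*\oplus 6$ in Step 3, which I would read off from the Loewy diagram of $P(6^*)$. For $10'$, the same computation gives $\underline{\Hom}_A(10',15^*)=0$, using $\Omega^{-1}U(k_H,7,3^*,6^*)=F(10')$.
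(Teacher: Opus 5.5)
\textbf{Gap in Step 3.} Your Steps 1 and 2 are correct. They are the paper's reduction, phrased with truncation triangles instead of Lemma~\ref{Zimmer-6.6.2}: everything comes down to $\Ext^1_B(U,6^*)=0$ for $U=U(k_H,7,3,6)$. The paper treats the $\tau$-conjugate statement $\Ext^1_B(U(k_H,7,3^*,6^*),6)=0$.

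The socle check you flagged is fine: $\iota\colon P(6^*)/6^*\hookrightarrow P(3^*)\oplus P(6)$ is indeed an injective hull. What fails is the claim $\Omega^{-2}(6^*)\cong 15^*$. If $\mathrm{Coker}\,\iota$ were $15^*$, then $\dim P(3^*)=48+15-54=9$. But every projective $kH$-module has dimension divisible by $27$. In fact $\dim F(45')=72$ (Lemma~\ref{Res-Ind_45-45'}(x)) and $\dim\Omega F(45')=63$ (Lemma~\ref{mu'}(vii)), so $\dim P(3^*)=81$ and $\dim\mathrm{Coker}\,\iota=87$.

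Lemma~\ref{mu'}(vi) as printed is a misprint. What is actually proved and used, in (vii) and in the table of Lemma~\ref{complex}, is $\Omega F(45')/\Im\mu'\cong 15^*$, i.e.\ $\H^{-1}(X^\bullet(45'))=15^*$. So your chain of isomorphisms ends at the wrong module. The vanishing of $\underline{\Hom}_B(F(10),15^*)$ therefore proves nothing about $\Ext^1_B(U,6^*)$.

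\textbf{Repair.} Your route can be fixed with one more step. By Lemma~\ref{mu'}(v), $\Im\iota=\Im\mu'\subseteq\Ker\pi_X=\Omega F(45')$. This gives a short exact sequence
\[
0\to 15^*\to\Omega^{-2}(6^*)\to F(45')\to 0,
\]
hence a triangle in the stable category. The sequence
\[
\underline{\Hom}_B(F(10),15^*)\to\underline{\Hom}_B(F(10),\Omega^{-2}6^*)\to\underline{\Hom}_B(F(10),F(45'))
\]
is then exact in the middle. Via the stable equivalence $F$, the outer terms are $\underline{\Hom}_A(10,15^*)$ and $\underline{\Hom}_A(10,45')$. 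Both are quotients of Hom-spaces between non-isomorphic simples, so both vanish. Hence $\Ext^1_B(U,6^*)\cong\underline{\Hom}_B(F(10),\Omega^{-2}6^*)=0$, and the same argument works verbatim for $10'$.

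\textbf{Comparison with the paper.} With this correction your argument is a genuinely different, structure-light proof. It uses only the stable equivalence and the cohomology of $X^\bullet(45')$. The paper instead argues directly with Loewy structure. A nonzero class in $\Ext^1_B(U(k_H,7,3^*,6^*),6)$ yields a nonzero map into $P(6)/6$. Because the second socle layer of $P(6)$ is $3\oplus 6^*$, that map must be injective. This produces a uniserial submodule $U(k_H,7,3^*,6^*,6)\subseteq P(6)$, which the diagram of $P(6)$ in Lemma~\ref{P(6)-P(15)} excludes.
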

\begin{proof}
We first look at the case that $S:=45$ and $S':=10'$.
Consider the first equality.
Set $X^\bullet:=X^\bullet(45)[-2]$ and $Y^\bullet:=X^\bullet(10')[-2]$. 
Then Lemma~\ref{complex} implies
that
$$
\H^i(X^\bullet)=
\begin{cases}
\ 6&\text{ for }i=0\\
15&\text{ for }i=1\\
\ 0&\text{ for }i\,{\not\in}\,\{0,1\}
\end{cases}
\text{ \quad and \quad}
\H^i(Y^\bullet)=
\begin{cases}
0&\text{ for }i=0\\
U(k_H,7,3^*,6^*)&\text{ for }i=1\\
0&\text{ for }i\,{\not\in}\,\{0,1\}.
\end{cases}
$$
Set $U:=U(k_H,7,3^*,6^*)$.
Thus,
$\Ext_B^1(\H^1(X^\bullet),\H^0(Y^\bullet))=\Ext_B^1(15, 0)=0$,
and hence we can apply {\color{black}Lemma~\ref{Zimmer-6.6.2}}.
%\linebreak
Obviously,
$\Hom_B(\H^0(X^\bullet),\H^0(Y^\bullet))=\Hom_B(6,0)=0$ and
$\Hom_B(\H^1(X^\bullet),\H^1(Y^\bullet))=\Hom_B(15,U)=0$.
So the first equality follows by {\color{black}Lemma~\ref{Zimmer-6.6.2}}. 

Next, let us look at the second equality. 
If $\Ext_B^1(U,6)\,{\not=}\,0$, then there exists a $B$-module
$V:=U(k_H,7,3^*,6^*,6)$ since $S^2(P(6))=3\oplus 6^*$ by Lemma~\ref{P(6)-P(15)}(ii),
which is a contradiction since $V\subseteq P(6)$ (see Lemma~\ref{P(6)-P(15)}(ii)).
%the fact that $6\,{\not|}\,L_5(P(k_H))$ by \cite[Theorem 1]{Kos87}.
Thus, $0=\Ext_B^1(U,6)=\Ext_B^1(\H^1(Y^\bullet),\H^0(X^\bullet))$.
Hence we can apply {\color{black}Lemma~\ref{Zimmer-6.6.2}}.
Obviously, $\Hom_B(\H^0(Y^\bullet),\H^0(X^\bullet))=\Hom_B(0,6)=0$ and
$\Hom_B(\H^1(Y^\bullet),\H^1(X^\bullet))=\Hom_B(U,15)=0$.
Thus, just as in the above proof, we get the second equality from 
{\color{black}Lemma~\ref{Zimmer-6.6.2}}.

For the other cases $(S,S')\in\{(45,10),(45',10'),(45,10)\}$, the two equations follow
by similar method  and (\ref{sigma-tau}).

\end{proof}

%%%%%\newpage
\begin{Lemma}\label{F(k)vsF(10)}
For $S\in\{k_G,34,15,15^*\}$ and $S'\in\{10,10'\}$
\[
\Hom_{\mathcal D}(X^\bullet(S),X^\bullet(S'))=0\text{ \ \ and \ \ }
\Hom_{\mathcal D}(X^\bullet(S'),X^\bullet(S))=0.
\]
\end{Lemma}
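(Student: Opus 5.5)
For $S\in\{k_G,34,15,15^*\}$ the complex $X^\bullet(S)$ is the simple module $T:=F(S)\in\{k_H,7,15,15^*\}$ concentrated in degree $0$. For $S'=10$ the complex $X^\bullet(10)$ has cohomology only in degree $-1$, namely $U:=U(k_H,7,3,6)$; the case $S'=10'$ then follows by applying (\ref{sigma-tau}) together with Lemma~\ref{stableEqA-B}(iv). Because $X^\bullet(10)\cong U[1]$ in $\mathcal D$, everything reduces to computing Ext groups between a simple module and a uniserial module of length $4$. Concretely, we get
\[
\Hom_{\mathcal D}(X^\bullet(S),X^\bullet(10))\cong\Hom_{\mathcal D}(T,U[1])=\Ext^1_B(T,U)
\]
and
\[
\Hom_{\mathcal D}(X^\bullet(10),X^\bullet(S))\cong\Hom_{\mathcal D}(U[1],T)=\Ext^{-1}_B(U,T)=0.
\]
So the second equality holds automatically. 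To stay in the paper's framework, one can also derive it from Lemma~\ref{Zimmer-6.6.2}: shift so that $U$ sits in degree $0$ and $T$ in degree $1$, and note that all the relevant Hom and Ext terms vanish.

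The real content is the first equality, namely $\Ext^1_B(T,U)=0$ for $T\in\{k_H,7,15,15^*\}$. I would compute it from the short exact sequence $0\to U\to P(6)\to \Omega^{-1}U\to 0$, using that $U\subseteq P(6)$ with $\soc(U)=6$, as in Lemma~\ref{X(10')}. Since $\Ext^1_B(T,P(6))=0$, this gives
\[
\Ext^1_B(T,U)\cong \Hom_B(T,\Omega^{-1}U)/\mathrm{Im}\,\Hom_B(T,P(6)).
\]
For $T\neq 6$ we have $\Hom_B(T,P(6))=0$, so it suffices to show $\Hom_B(T,\Omega^{-1}U)=0$. Now $\Omega^{-1}U\cong P(6)/U$, and by Lemma~\ref{X(10')}(ii) this is just $F(10)$. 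By Lemma~\ref{F(10')}, $\soc(F(10))=6^*$ (the bottom of the picture). None of $k_H,7,15,15^*$ is isomorphic to $6^*$, so $\Hom_B(T,F(10))=0$, and hence $\Ext^1_B(T,U)=0$.

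An alternative, closer to the style of the paper, is to argue that any nonzero $\Hom_{\mathcal D}(T,X^\bullet(10))$ is represented by a chain map, because $T$ is concentrated in degree $0$. Such a map is a map $T\to F(10)$ in degree $0$ whose composite to the next term is zero, taken modulo homotopies that factor through $P(6)$. Since $\soc F(10)=6^*\not\cong T$, this map is already zero. The same argument works for $10'$, using $\soc F(10')=6$.

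The one step that needs care is confirming the socle of $F(10)$ from the diagram in Lemma~\ref{F(10')}, i.e. that $\soc(\mathcal H_Q(k_H))=6^*$ is simple. This is read off from Lemma~\ref{H_Q(k_H)}(ii), or alternatively from the self-duality-type arguments used in Lemma~\ref{Res-Ind_45-45'}(viii). I expect this to be the only step that is not routine.
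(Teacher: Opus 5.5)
Your proof is correct. It reaches the result by a more direct route than the paper, although the key computation is the same.

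\textbf{The paper's route.} The paper only says the lemma is similar to Lemma~\ref{Hom(X(45),X(10))}. That means: shift both complexes by $[-1]$, so that $U=U(k_H,7,3,6)$ sits in degree $0$ and the simple module $T$ in degree $1$. Then check hypothesis (b) of Lemma~\ref{Zimmer-6.6.2}, and observe that the relevant pull-back of zero $\Hom$-spaces is zero. In the direction $X^\bullet(S)\to X^\bullet(10)$, hypothesis (b) is exactly $\Ext^1_B(T,U)=0$. So the paper's argument needs precisely the vanishing you prove, but it leaves this check implicit. In Lemma~\ref{Hom(X(45),X(10))} the analogous vanishing is obtained by showing that a non-split extension would produce a forbidden uniserial submodule of a PIM.

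\textbf{Your route.} You bypass Lemma~\ref{Zimmer-6.6.2}. Since each complex has cohomology in a single degree, $X^\bullet(10)\cong U[1]$ in $\mathcal D$. So the two $\Hom$-spaces are literally $\Ext^1_B(T,U)$ and $\Ext^{-1}_B(U,T)=0$. You then compute $\Ext^1_B(T,U)$ from $0\to U\to P(6)\to F(10)\to 0$, using that $P(6)$ is injective.

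\textbf{Comparison.} Your version is more elementary and makes explicit the one computation hidden behind ``similar''. It also shows that the reverse direction vanishes for degree reasons alone. The paper's route has the advantage of uniformity with the two-cohomology cases such as $X^\bullet(45)$, where the pull-back machinery is genuinely needed.

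\textbf{A simplification.} Only $T=7$ actually requires knowing $\soc F(10)=6^*$. The modules $k_H$, $15$, $15^*$ are not even composition factors of $F(10)$, whose factors are $6,3,6^*,7,3^*,6,6^*$.

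\textbf{A correction to your alternative argument.} The justification ``because $T$ is concentrated in degree $0$'' is not the right reason. Morphisms in $\mathcal D$ out of a module are not in general represented by chain maps. For example, $\Hom_{\mathcal K}(T,U[1])=0$, while $\Hom_{\mathcal D}(T,U[1])=\Ext^1_B(T,U)$, which can be nonzero in general. The chain-map description does hold for $X^\bullet(10)=[P(6)\to F(10)]$, but the reason is that the degree $-1$ term $P(6)$ is injective. That fact gives $\Hom_{\mathcal K}(T,X^\bullet(10))\cong \Hom_B(T,F(10))/\mathrm{Im}\,\Hom_B(T,P(6))\cong\Ext^1_B(T,U)$, which is just your long exact sequence again.
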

\begin{proof}
Similar to the proof of Lemma~\ref{Hom(X(45),X(10))}.
\end{proof}

\begin{Lemma}\label{Ext^2}
$\dim_k\,\Ext_B^2(15,6)=1$.
\end{Lemma}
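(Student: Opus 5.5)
We compute $\Ext_B^2(15,6)$ by dimension shifting inside $B$. Since $B$ is self-injective, $\Ext_B^2(15,6)\cong\underline{\Hom}_B(\Omega^2(15),6)$, or equivalently $\Ext_B^2(15,6)\cong\Ext_B^1(\Omega(15),6)\cong\underline{\Hom}_B(\Omega(15),\Omega^{-1}(6))$. For simple target modules it is cleanest to use the first form together with the observation that $\Ext^2_B(15,6)$ has dimension equal to the multiplicity of $P(6)$ in the second term $P_2$ of a minimal projective resolution of $15$. Equivalently, $\dim_k\Ext_B^2(15,6)=[L_1(\Omega^2(15)),6]$, i.e. the multiplicity of $6$ in the head of $\Omega^2(15)$.

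First, we read off $\Omega(15)=\rad P(15)$ from the picture of $P(15)$ in Lemma~\ref{P(6)-P(15)}(i). Its head is $L_2(P(15))=k$, so $P(\Omega(15))=P(k_H)$. Next we form $\Omega^2(15)=\Ker\bigl(P(k_H)\twoheadrightarrow\Omega(15)\bigr)$. By the dimension count $\dim\Omega^2(15)=\dim P(k_H)-\dim P(15)+15$, using the Cartan invariants from \cite[(1.1)Lemma]{Kos87} and \cite[Theorem 1]{Kos87}. To find its head we compare the Loewy structure of $P(k_H)$ from \cite[Theorem 1]{Kos87} with that of $\Omega(15)$, which has Loewy layers $k;7;k\oplus3;15\oplus15^*;k;7;k;15$ roughly.

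A cleaner route is to identify $\Omega^2(15)$ via Lemma~\ref{mu'}(vii). The picture of $\Omega F(45)$ shows a module with head $6\oplus15$ that is an extension involving $15$ on top. Shifting this by the stable equivalence, $\Omega^{-1}$ relates $15$ and a module with $6$ in its socle. Concretely, Lemma~\ref{complex} says $X^\bullet(45)$ has $\H^{-2}=6$ and $\H^{-1}=15$. The kernel $\Omega F(45)$ sits in $0\to\Omega F(45)\to P(3)\oplus P(6^*)\to F(45)\to0$, and $\Omega F(45)$ has quotient $15$ with kernel $P(6)/6=\Omega^{-1}(6)$. This gives a non-split short exact sequence
\[
0\to\Omega^{-1}(6)\to\Omega F(45)\to15\to0 .
\]
It represents a nonzero class in $\Ext^1_B(15,\Omega^{-1}(6))\cong\Ext^2_B(15,6)$. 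The class is nonzero because $\Omega F(45)$ is indecomposable and non-projective, and the sequence does not split as the head of $\Omega F(45)$ is $6\oplus15$ with $15$ glued below. Hence $\dim\ge1$.

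For the upper bound we need $\dim\Ext^1_B(15,\Omega^{-1}(6))\le1$. By dimension shifting this equals $\dim\Ext_B^1(\Omega(15),6)$, which is the multiplicity of $6$ in $L_2$ of the projective cover of $\Omega(15)$ minus what lies in $L_2(\Omega(15))$. That is, it is the multiplicity of $P(6)$ in the projective cover of $\Omega^2(15)$. Using the radical layers of $P(k_H)$ from \cite[Theorem 1]{Kos87}, $6$ and $6^*$ appear in $L_2(P(k_H))$ at most trivially. We then check from the Loewy layers that exactly one copy of $6$ becomes a head constituent of $\Omega^2(15)$.

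The main obstacle is this upper bound. It requires careful bookkeeping of the Loewy series of $P(k_H)$ against $\Omega(15)$. I would instead try to bypass it by transporting the question via $F$: we have $\Ext^2_B(15,6)\cong\underline{\Hom}_B(\Omega^2(15),6)$, and $F$ commutes with $\Omega$ and $F(15_{kG})=15$. This reduces the question to $\underline{\Hom}_A(\Omega^2(15),F^{-1}(6))$, which may be easier to compute from known $A$-data in \cite{Kos85, KW99a}.
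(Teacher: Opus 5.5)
Your lower bound is sound, and it takes a different route from the paper's. The sequence $0\to\Omega^{-1}(6)\to\Omega F(45)\to 15\to 0$ is the $\tau$-twist of Lemma~\ref{mu'}(vi)--(vii), i.e.\ ${\mathrm{H}}^{-1}(X^\bullet(45))=15$ in Lemma~\ref{complex}. It cannot split, because $F(45)$ is indecomposable and non-projective, and hence so is $\Omega F(45)$. So it gives a nonzero element of $\Ext^1_B(15,\Omega^{-1}(6))\cong\Ext^2_B(15,6)$. This replaces the paper's check that the nonzero map $\Omega(15)\to\Omega^{-1}(6)$ with image $U(k_H,7,3)$ does not factor through $I(\Omega 15)=P(15)$, which uses $c_{15,6}=0$.

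The gap is the upper bound, which you never establish.

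\begin{itemize}
\item The criterion you invoke is false in general. You claim $\dim\Ext^1_B(\Omega(15),6)$ equals the multiplicity of $6$ in $L_2(P(\Omega 15))$ minus its multiplicity in $L_2(\Omega 15)$. But the head of $\Omega M\subseteq\rad P(M)$ need not lie in $L_2(P(M))$.
\item In this case the criterion would even give $0$. As you note yourself, $L_2(P(k_H))=7\oplus 15\oplus 15^*$ contains no $6$, which contradicts your own lower bound.
\item The closing ``check from the Loewy layers'' is only asserted, not carried out.
\item The detour through $F$ lands on $F^{-1}(6)$. This is not the image of any simple $A$-module, and nothing about it is available.
\end{itemize}

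What is missing is a plain multiplicity count. $\Omega(15)=\rad P(15)$ has simple head $k_H$, so it is a quotient of $P(k_H)$. Hence
\begin{align*}
\dim_k\Ext^2_B(15,6)&=\dim_k\underline{\Hom}_B(\Omega 15,\Omega^{-1}6)\le\dim_k\Hom_B(\Omega 15,\Omega^{-1}6)\\
&\le\dim_k\Hom_B(P(k_H),P(6)/6)=c_{P(6)}(k_H)=1,
\end{align*}
where the last equality is read off from the picture of $P(6)$ in Lemma~\ref{P(6)-P(15)}(ii). Equivalently, in your first formulation: $\Omega^2(15)\subseteq\rad P(k_H)$, so $6$ occurs in its head at most $c_{P(k_H)}(6)=c_{6,k}=c_{k,6}=1$ times. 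With this bound inserted, your argument is complete.
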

\begin{proof}
By Lemma~\ref{P(6)-P(15)},
$\Hom_B(\Omega 15,\Omega^{-1}6)\cong\End_B(U(k_H,7,3))\cong k$, as $k$-spaces,
and hence
\begin{equation}\label{End} 
\Hom_B(\Omega 15,\Omega^{-1}6)\cong k\text{ as }k\text{-spaces}.
\end{equation}
Secondly, take any $\varphi\in\Hom_B(\Omega 15,\Omega^{-1}6)$ which is relatively-projective.
Then, by \cite[II Lemma 2.7]{Lan83}, there exists $\psi\in\Hom_B(I(\Omega 15),\Omega^{-1}6)$
such that $\varphi=\psi\circ\tau$ where $\tau$ is the monomorphism in 
$\Hom_B(\Omega 15,I(\Omega 15))$ given by the injective hull. 
So that $\psi\in\Hom_B(P(15),\Omega^{-1}6)$ since $I(\Omega 15))=P(15)$.
Assume that $\varphi\,{\not=}\,0$. So $\psi\,{\not=}\,0$, and hence
$15\,|\,L_1(\Im\,\psi)$. However, $\Im\,\psi\subseteq\Omega^{-1}6$ and $\Omega^{-1}6$ is a
quotient of $P(6)$, which implies that $c_{15,6}\,{\not=}\,0$, a contradiction by 
Lemma~\ref{P(6)-P(15)}. Thus, 
\begin{equation}\label{Omega}
{\underline{\Hom}}_B(\Omega 15,\Omega^{-1}6)\cong\Hom_B(\Omega 15,\Omega^{-1}6)
\text{ \ as }k\text{-spaces}.
\end{equation}
Therefore,
\begin{align*}
\Ext_B^2(15,6)&\cong\Ext_B^1(15,\Omega^{-1}6)%\text{ \ by \cite[Proposition 2.5.7(i)]{Ben98-I} }
\\
&\cong {\underline{\Hom}}_B(\Omega 15,\Omega^{-1}6)\text{ \ by \cite[II Corollary 2.6]{Lan83} }
\\
&\cong\Hom_B(\Omega 15,\Omega^{-1}6)\text{ \ by (\ref{Omega})  }
\\
&\cong k \text{ \ by (\ref{End}). }
\end{align*}
\end{proof}

\begin{Lemma}\label{Hom(X(45),X(45))}
For $S, S'\in\{45,45'\}$,
$
\dim_k\,
\Hom_{\mathcal D}(X^\bullet(S),X^\bullet(S'))=\delta_{S,S'}.
$
\end{Lemma}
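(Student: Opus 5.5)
The plan is to argue exactly as in Lemmas~\ref{Hom(F(10),F(10))} and \ref{Hom(X(45),X(10))}, using Lemma~\ref{Zimmer-6.6.2}. The complexes $X^\bullet(45)$ and $X^\bullet(45')$ have cohomology only in degrees $-2$ and $-1$. So we shift and put $X^\bullet:=X^\bullet(S)[-2]$ and $Y^\bullet:=X^\bullet(S')[-2]$. By Lemma~\ref{complex}, for $S=45$ we get $\H^0(X^\bullet)=6$ and $\H^1(X^\bullet)=15$. For $S=45'$ we get $\H^0=6^*$ and $\H^1=15^*$. Using (\ref{sigma-tau}) and the fact that $\tau$ induces an autoequivalence of $\mathcal D$ with $X^\bullet(45)^\tau\cong X^\bullet(45')$, it suffices to treat $(S,S')=(45',45')$ and $(S,S')=(45',45)$.

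\emph{Case $(45',45)$.} Here $\H^0(X^\bullet)=6^*$, $\H^1(X^\bullet)=15^*$, $\H^0(Y^\bullet)=6$ and $\H^1(Y^\bullet)=15$.
\begin{itemize}
\item To apply Lemma~\ref{Zimmer-6.6.2} we need $\Ext^1_B(15^*,6)=0$. This is read off from Lemma~\ref{P(6)-P(15)}: $L_2(P(15^*))=k$, so $6\nmid L_2(P(15^*))$.
\item Both $\Hom_B(6^*,6)$ and $\Hom_B(15^*,15)$ vanish.
\end{itemize}
The pull-back is therefore $0$, as required.

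\emph{Case $(45',45')$.} Here $\H^0=6^*$ and $\H^1=15^*$ on both sides.
\begin{itemize}
\item The condition $\Ext^1_B(15^*,6^*)=0$ again follows from $L_2(P(15^*))=k$.
\item Lemma~\ref{Zimmer-6.6.2} then expresses $\Hom_{\mathcal D}(X^\bullet,Y^\bullet)$ as the pull-back of $f\colon \End_B(6^*)=k\to \Ext^2_B(15^*,6^*)$ and $g\colon \End_B(15^*)=k\to \Ext^2_B(15^*,6^*)$.
\item Applying $\tau$ to Lemma~\ref{Ext^2} gives $\Ext^2_B(15^*,6^*)\cong k$.
\end{itemize}
The pull-back of $f$ and $g$ is $\{(a,b)\in k^2 : f(a)=g(b)\}$. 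So it is $1$-dimensional precisely when $f$ and $g$ are both nonzero. If one of them is zero, its dimension is at least $2$.

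The main obstacle is showing $f\neq 0$ and $g\neq0$. The map $f$ (respectively $g$) sends $\mathrm{id}$ to the class of the extension measuring the obstruction to extending the identity on one cohomology group to a morphism of complexes. Up to sign this is the $2$-extension class $\theta\in\Ext^2_B(15^*,6^*)$ defined by the complex $X^\bullet$ itself. So I will show that $X^\bullet$ is not isomorphic in $\mathcal D$ to $6^*\oplus 15^*[-1]$, i.e. that $\theta\neq0$.

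For this I use the truncation triangle $6^*\to X^\bullet\to 15^*[-1]\to 6^*[1]$, whose connecting morphism is $\theta$. The complex $X^\bullet$ is a truncated projective presentation with middle term $P(6^*)\overset{\mu'}{\to}P(3^*)\oplus P(6)$, and $\Ker\mu'=6^*$. Its cokernel is $(P(3^*)\oplus P(6))/\Im\mu'\cong 15^*$ by Lemma~\ref{mu'}(vi), and the last term is $X=F(45')$, which is non-projective. By Lemma~\ref{mu'}(vii), $\Omega F(45')$ is an indecomposable non-split module containing $P(6^*)/6^*=\Omega^{-1}6^*$ with quotient $15^*$. This is a non-split element of $\Ext^1_B(15^*,\Omega^{-1}6^*)\cong\Ext^2_B(15^*,6^*)$, and it represents $\theta$. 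Hence $\theta\ne0$.

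It follows that $f(\mathrm{id})=\pm\theta\neq 0$ and $g(\mathrm{id})=\pm\theta\neq0$. The pull-back $\{(a,b): a\theta=\pm b\theta\}$ is then $1$-dimensional, which finishes the proof. The one delicate point is identifying the image of $\mathrm{id}$ under $f$ and $g$ in Zimmermann's construction with this class $\theta$; I would check this directly from the proof of \cite[Proposition 6.6.2]{Zim14}.
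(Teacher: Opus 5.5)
Your proposal is correct and follows the paper's route: apply Lemma~\ref{Zimmer-6.6.2} to the shifted complexes (condition (b) holds since $L_2(P(15^*))=k$), reduce via $\tau$, and use Lemma~\ref{Ext^2} so that for $S=S'$ the pull-back of $f,g\colon k\to\Ext^2_B(15^*,6^*)\cong k$ is one-dimensional, while for $S\neq S'$ both $\Hom$-terms vanish. Your argument that $f(\mathrm{id})=\pm g(\mathrm{id})$ is the non-split class of $0\to P(6^*)/6^*\to\Omega F(45')\to 15^*\to 0$ (non-split because $\Omega F(45')$ is indecomposable, $F(45')$ being indecomposable and non-projective) supplies a step the paper only asserts. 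Two small slips: it is $\Ker\pi_X/\Im\mu'$, not the cokernel of $\mu'$, that is isomorphic to $15^*$, and the pull-back is already one-dimensional as soon as either $f$ or $g$ is nonzero, not only when both are.
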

\begin{proof}
The method to prove this lemma is essentially the same as in
Lemmas~\ref{Hom(F(10),F(10))} and \ref{Hom(X(45),X(10))}, or even easier 
because the cohomology groups $\H^i(X^\bullet)$ and $\H^i(Y^\bullet)$ showing up
are just simple modules unless zero modules, once we can use Lemma~\ref{Ext^2}
and we notice the following. Namely,
when we want to apply {\color{black}Lemma~\ref{Zimmer-6.6.2}} to the case that 
e.g. $S=S'=45$, the pull-back showing up there is just $(k\oplus k)/\Delta$ where
$\Delta:=\{(\alpha,\alpha)\in k\oplus k|\alpha\in k\}$, and this is isomorphic to
$\Hom_{\mathcal D}(X^\bullet(45),X^\bullet(45))$, so that its $k$-dimension is one.
Of course, $id_{X^\bullet(45)}\,{\not=}\,0$ in $\Hom_{\mathcal D}(X^\bullet(45),X^\bullet(45))$.
\end{proof}

%\newpage
%{\color{blue}
\begin{Definition}\label{complex-Y}
From now on till the end of this paper, set
\begin{align*}
Y^\bullet &:=[0\rightarrow P(6)\overset{\partial^{-2}}{\rightarrow}\Omega F(45)\rightarrow 0]
\ \ \in{\mathrm{Ob}}(\mathcal C)
\text{ where }\partial^{-2}: P(6)\twoheadrightarrow P(6)/6 \subseteq \Omega F(45)
\\
(Y')^\bullet &:=[0\rightarrow P(6^*)\overset{\partial^{-2}}{\rightarrow}\Omega F(45')\rightarrow 0]
\ \ \in{\mathrm{Ob}}(\mathcal C)
\text{ where }\partial^{-2}: P(6^*)\twoheadrightarrow P(6^*)/6^* \subseteq \Omega F(45')
\end{align*}
(note that it makes sense by Lemma~\ref{mu'}(vii) ).
\end{Definition}

\begin{Lemma}\label{45}
$X^\bullet(45)\cong Y^\bullet\text{ and }X^\bullet(45')\cong (Y')^\bullet$ in 
$\mathcal D$.
\end{Lemma}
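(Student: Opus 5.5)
By the symmetry (\ref{sigma-tau}) and Lemma~\ref{stableEqA-B}(iv) it suffices to treat $X^\bullet(45')$ and $(Y')^\bullet$. The plan is to exhibit an explicit chain map between them, then show that it is a quasi-isomorphism.

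Write $X^\bullet(45')=[P(6^*)\overset{\mu'}{\to}P(3^*)\oplus P(6)\overset{\pi_X}{\twoheadrightarrow}X]$, placed in degrees $-2,-1,0$. By Lemma~\ref{mu'}(iv),(v) this is a complex, and $\Im\,\mu'\subseteq\Ker\,\pi_X=\Omega X=\Omega F(45')$. Consider the short exact sequence of complexes
$$0\to[P(3^*)\oplus P(6)\overset{\pi_X}{\twoheadrightarrow}X]\to X^\bullet(45')\to P(6^*)[2]\to0 .$$
A cleaner route is to compare with the truncation. The subcomplex consisting of $P(6^*)$ in degree $-2$, $\Ker\,\pi_X=\Omega X$ in degree $-1$, and $0$ in degree $0$ has the same cohomology as $X^\bullet(45')$. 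Indeed, the inclusion into $X^\bullet(45')$ induces isomorphisms on $\H^{-2}$ and $\H^{-1}$, and $\H^0$ vanishes on both sides because $\pi_X$ is onto. This subcomplex is $[P(6^*)\overset{\mu'}{\to}\Omega F(45')]$, concentrated in degrees $-2,-1$.

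Next I identify this subcomplex with $(Y')^\bullet$. By Lemma~\ref{mu'}(i),(ii), $\mu'=\iota\circ\pi$ with $\pi:P(6^*)\twoheadrightarrow P(6^*)/6^*$ the canonical map and $\iota$ injective. The differential $\partial^{-2}$ of $(Y')^\bullet$ is the canonical surjection $P(6^*)\twoheadrightarrow P(6^*)/6^*$ followed by an embedding $P(6^*)/6^*\subseteq\Omega F(45')$. That embedding is exactly the one read off in Lemma~\ref{mu'}(vii), whose picture of $\Omega F(45')$ was derived from $\Im\,\mu'\subseteq\Omega X$ together with (vi). So I take that embedding to be $\iota$ with its codomain restricted, and then the two complexes coincide literally. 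The resulting composite $(Y')^\bullet\hookrightarrow X^\bullet(45')$ is a quasi-isomorphism, hence an isomorphism in $\mathcal D$.

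The main obstacle is the case where $\partial^{-2}$ of $(Y')^\bullet$ is defined by some other embedding $P(6^*)/6^*\hookrightarrow\Omega F(45')$. Then one must show the two complexes are still isomorphic. I would first check that every such embedding differs from $\iota$ by an automorphism of $\Omega F(45')$ composed with an automorphism of $P(6^*)/6^*$; the latter lifts to $P(6^*)$. To do this I would use the uniqueness of the submodule of $\Omega F(45')$ whose cokernel is $15^*$, read off from the Loewy structure in Lemma~\ref{mu'}(vii). I would also use that $\Hom_B(P(6^*)/6^*,\Omega F(45'))$ is small, which can be computed from the structures of $P(6^*)$ and $P(15^*)$ in Lemma~\ref{P(6)-P(15)}. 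Failing that, I would argue directly: both complexes have $\H^{-2}=6^*$ and $\H^{-1}=15^*$. By Lemma~\ref{Zimmer-6.6.2} and Lemma~\ref{Ext^2}, such two-term objects are classified by an element of $\Ext_B^2(15^*,6^*)\cong k$. Both classes are nonzero, since otherwise $X^\bullet(45')\cong 6^*[2]\oplus15^*[1]$, which would contradict $\dim\End_{\mathcal D}(X^\bullet(45'))=1$ from Lemma~\ref{Hom(X(45),X(45))}. Two nonzero classes in a one-dimensional space differ by a scalar, absorbed by rescaling, so the two complexes are isomorphic in $\mathcal D$.
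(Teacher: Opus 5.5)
Your main argument is correct and is the paper's proof. The map $(Y')^\bullet\to X^\bullet(45')$ that is the identity on $P(6^*)$, the inclusion $\Omega F(45')\subseteq P(3^*)\oplus P(6)$ in degree $-1$, and zero in degree $0$ (your truncation subcomplex) is a quasi-isomorphism because $\Im\,\mu'\subseteq\Ker(\pi_X)$ and $\pi_X$ is onto, and the embedding $P(6^*)/6^*\subseteq\Omega F(45')$ in Definition~\ref{complex-Y} is precisely the one induced by $\mu'$. So the fallback paragraph on other embeddings is not needed. Note also that deducing the nonvanishing of the class in $\Ext_B^2(15^*,6^*)$ from Lemma~\ref{Hom(X(45),X(45))} is circular in spirit, since that lemma's pull-back computation already presupposes it. That nonvanishing follows directly from the indecomposability of $\Omega F(45')$, i.e.\ the nonsplitness of $0\to P(6^*)/6^*\to\Omega F(45')\to 15^*\to 0$.
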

\begin{proof}
By Lemma~\ref{complex},
\begin{equation*}
X^\bullet :=X^\bullet(45'): \quad
0\rightarrow P(6^*)\overset{d^{-2}}{\rightarrow}P(3^*)\oplus P(6)
\overset{d^{-1}}{\rightarrow}F(45')\rightarrow 0.
\end{equation*}
Then, it follows from Lemma~\ref{mu'}(iv) that
$0=d^{-1}\circ d^{-2}(P(6^*))=d^{-1}(P(6^*)/6^*)$ and
$d^{-1}$ is the epimorphism defined by the projective cover of $F(45')$, and hence
$P(6^*)/6^*\subseteq\Ker(d^{-1})=\Omega(F(45'))$.  
Namely, $\Im(d^{-2})\subseteq\Omega F(45')\subseteq P(3^*)\oplus P(6)$.
Thus, by setting that $f^{-2}:=id_{P(6^*)}$ and that $f^{-1}$ is just the inclusion map
as below, we get the following
\[
\begin{tikzcd}
\qquad (Y')^\bullet\arrow[d,"f^\bullet"] \ :  0\arrow[r] &P(6^*)\arrow[d,"f^{-2}\,=\,id"]\arrow[r,"\partial^{-2}"] 
                   & \Omega F(45')\arrow[d,"f^{-1}\,=\,{\mathrm{inclusion}}"]\arrow[r,"\partial^{-1}"] 
                   &0\arrow[r]\arrow[d,"f^0\,=\,0"]&0
                   \\
\quad\ (X')^\bullet :
0 \arrow[r]&\arrow[r,"d^{-2}"] P(6^*) &\arrow[r,"d^{-1}"] P(3^*)\oplus P(6)&0\arrow[r]&0
\end{tikzcd}
\]
that is a commutative diagram, namely $f^\bullet\in\Hom_{\mathcal C}((Y')^\bullet,(X')^\bullet)$
because of the definitions of $f^{-2}$ and $f^{-1}$. Hence we have
$\overline{f^\bullet}:={\mathrm{cls}}(f^\bullet)\in\Hom_{\mathcal K}((Y')^\bullet,(X')^\bullet)$.
Then, it is quite easy to know that $\overline{f^\bullet}$ is quasi-isomorphic
by making use of the shape of $\Omega F(45')$ in Lemma~\ref{mu'}(vii) 
and that of $P(6^*)$ in Lemma~\ref{P(6)-P(15)}(ii), and also Lemma~\ref{complex}.
This means that $\overline{f^\bullet}$ induces an isomorphism in $\mathcal D$.
\end{proof}

\begin{Lemma}\label{Ric02_6-1-(a)}
Condition (a) in \cite[Theorem 6.1]{Ric02} is satisfied.
\end{Lemma}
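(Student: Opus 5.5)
The plan is to verify condition (a) of \cite[Theorem 6.1]{Ric02}, namely
$$\Hom_{\mathcal D}\big(X^\bullet(S),X^\bullet(S')[m]\big)=0 \quad\text{for all } S,S'\in\Irr_k(A) \text{ and all } m<0,$$
using only the cohomology of the complexes listed in Lemma~\ref{complex}, not the complexes themselves. That table shows each $X^\bullet(S)$ has cohomology only in degrees $-2,-1,0$. The cohomology is:
\begin{itemize}
\item a simple module in degree $0$ for $S\in\{k_G,34,15,15^*\}$;
\item the uniserial module $U(k_H,7,3,6)$ (resp.\ $U(k_H,7,3^*,6^*)$) in degree $-1$ for $S=10$ (resp.\ $10'$);
\item $6$ (resp.\ $6^*$) in degree $-2$ and $15$ (resp.\ $15^*$) in degree $-1$ for $S=45$ (resp.\ $45'$).
\end{itemize}
I would filter both arguments by the canonical truncation triangles $\tau_{\le i-1}Z\to\tau_{\le i}Z\to \H^i(Z)[-i]\to$. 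Applying the cohomological functor $\Hom_{\mathcal D}(-,-)$ repeatedly (at most two steps on each side) reduces the claim to the vanishing of the pieces
$$\Hom_{\mathcal D}\big(\H^i(X^\bullet)[-i],\,\H^j(Y^\bullet)[-j+m]\big)\cong \Ext_B^{\,i-j+m}\big(\H^i(X^\bullet),\H^j(Y^\bullet)\big),$$
where $X^\bullet=X^\bullet(S)$ and $Y^\bullet=X^\bullet(S')$. This is the same mechanism as in Lemma~\ref{Zimmer-6.6.2}. Such a piece is automatically $0$ when $i-j+m<0$, so for $m<0$ only pairs with $i>j$ can contribute.

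Because the degrees lie in $\{-2,-1,0\}$, only a short list of cases survives, and I would check each against the PIM structures.

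For $m=-1$ with $i=j+1$, one needs the following $\Hom$-spaces to vanish, each by comparing top and socle:
\begin{itemize}
\item $\Hom_B(\H^0X,\H^{-1}Y)$ with $\H^0X\in\{k,7,15,15^*\}$ and $\H^{-1}Y\in\{U(k_H,7,3,6),U(k_H,7,3^*,6^*),15,15^*\}$. These vanish because the socles of the $U$'s are $6$ and $6^*$, and the remaining cases are Schur.
\item $\Hom_B(\H^{-1}X,\H^{-2}Y)$ with $\H^{-1}X\in\{U(\dots),15,15^*\}$ and $\H^{-2}Y\in\{6,6^*\}$. These vanish because the $U$'s have top $k_H$, and because $15,15^*$ are simple and distinct from $6,6^*$.
\end{itemize}
For $m=-1$ with $i=j+2$, one needs $\Ext^1_B(T,6)=\Ext^1_B(T,6^*)=0$ for $T\in\{k,7,15,15^*\}$. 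This follows from Lemma~\ref{P(6)-P(15)}(ii): $L_2(P(6))=3\oplus 6^*$ and $L_2(P(6^*))=3^*\oplus 6$, together with self-duality of $B$, which gives $\Ext^1_B(T,6)\cong\Ext^1_B(6^*,T^*)$. For $m=-2$ only $i=0$, $j=-2$ occurs, giving $\Hom_B(T,6)=\Hom_B(T,6^*)=0$, which is clear. For $m\le -3$ nothing survives.

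The main obstacle is bookkeeping rather than conceptual. One must make sure the reduction via truncation triangles is legitimate. I would argue it by induction on the number of nonzero cohomology groups, each step using a long exact sequence in which both outer terms vanish. One must also make sure no case of the $8\times 8$ pairs is missed, including $S=S'$ and the cases related by $\sigma,\tau$ via (\ref{sigma-tau}), which I would use to halve the work. The only genuinely module-theoretic input is the $\Ext^1$-vanishing into $6$ and $6^*$. I would double-check $\Ext^1_B(15,6)=0$ carefully against the pictures of $P(15)$ and $P(6)$ in Lemma~\ref{P(6)-P(15)}, since $15$ and $6$ do appear together in the degree $-2$ and $-1$ cohomology of $X^\bullet(45)$.
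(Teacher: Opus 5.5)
Your reduction by truncation triangles is sound. It differs from the paper's route, which checks chain maps on the explicit models $Y^\bullet$, $(Y')^\bullet$ of Definition~\ref{complex-Y}: the paper treats the case $(45',45')$ and calls the rest similar. However, your case check contains a genuine error.

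For $S=15$, $S'=45$, $m=-1$ (and, via (\ref{sigma-tau}), for $S=15^*$, $S'=45'$), the piece with $(i,j)=(0,-1)$ is
$\Hom_B\big(\H^0X^\bullet(15),\H^{-1}X^\bullet(45)\big)=\Hom_B(15,15)\cong k$.
Schur's lemma makes this piece nonzero, not zero. Your criterion requires \emph{every} piece to vanish, so your argument does not prove $\Hom_{\mathcal D}\big(X^\bullet(15),X^\bullet(45)[-1]\big)=0$.

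The statement is still true there, but the vanishing comes from a connecting homomorphism rather than from the pieces. Shift the truncation triangle $6[2]\to X^\bullet(45)\to 15[1]\xrightarrow{\ \xi\ }6[3]$ and apply $\Hom_{\mathcal D}(15,-)$. This yields the exact sequence
\[
0=\Ext^1_B(15,6)\longrightarrow\Hom_{\mathcal D}\big(15,X^\bullet(45)[-1]\big)\longrightarrow\Hom_B(15,15)\xrightarrow{\ \xi\circ-\ }\Ext^2_B(15,6),
\]
so you need $\xi\neq 0$. This holds for the following reasons.
\begin{itemize}
\item By Lemma~\ref{45}, $X^\bullet(45)\cong Y^\bullet$.
\item Under $\Ext^1_B(15,\Omega^{-1}6)\cong\Ext^2_B(15,6)$, the class $\xi$ corresponds to the extension $0\to P(6)/6\to\Omega F(45)\to 15\to 0$ (Lemma~\ref{mu'} and (\ref{sigma-tau})).
\item This extension is non-split, because $\Omega F(45)$ is indecomposable, $F(45)$ being indecomposable and non-projective.
\end{itemize}
Hence $\xi\circ-$ is injective and the Hom-space vanishes. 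Equivalently, apply Lemma~\ref{Zimmer-6.6.2} to $X^\bullet(15)[-1]$ and $X^\bullet(45)[-2]$: there $f=0$, and the pull-back is $\ker g$. This is exactly the mechanism you cited but then replaced by the cruder ``all pieces vanish'' test.

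With this extra input, the rest of your case list is correct and complete, including the $\Ext^1$-vanishing into $6$ and $6^*$.
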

\begin{proof}
We claim first that $\Hom_{\mathcal D}(X^\bullet(45'),X^\bullet(45')[-1])=0$.
Set $Z^\bullet:=(Y')^\bullet[-1]$ and take any $f^\bullet\in\Hom_{\mathcal C}(Y^\bullet,Z^\bullet)$, namely
\[
\begin{tikzcd}
\qquad (Y')^\bullet\arrow[d,"f^\bullet"] :  0\arrow[r] &P(6^*)\arrow[d,"f^{-2}\,=\,0"]
         \arrow[r,"\partial^{-2}"] 
                   & \Omega F(45')\arrow[d,"f^{-1}"]\arrow[r,"\partial^{-1}"] 
                   &0\arrow[r]\arrow[d,"f^0\,=\,0"]&0
                   \\
\quad  \ \ \ \quad Z^\bullet \ :\ 
0 \arrow[r]&\arrow[r,"\delta^{-2}"]0 &\arrow[r,"\delta^{-1}"] P(6^*)&\Omega F(45')\arrow[r]&0
\end{tikzcd}
\]
where $\delta^{-1}:=\partial^{-2}$. Obviously, $f^{-1}\circ\partial^{-2}=0$, we have 
$0=f^{-1}(\Im\,\partial^{-2})=f^{-1}(P(6)/6)$ by Lemma~\ref{mu'}(ii). So,
we can consider that $f^{-2}: \Omega F(45')/(P(6^*)/6^*)\rightarrow P(6^*)$.
Then, since $\Omega F(45')/(P(6^*)/6^*)\cong 15^*$ by Lemmas~\ref{mu'}(viii) and 
\ref{P(6)-P(15)}(ii), and since $c_{6^*,15^*}=0$ by Lemma~\ref{P(6)-P(15)}(ii),
it must be that $f^{-1}=0$. This shows that $f^\bullet=0$ even in $\mathcal C$.
Thus, of course $\Hom_{\mathcal D}(X^\bullet(45'),X^\bullet(45')[-1])=0$.
So, the claim is proved. All the other cases are proved by similar way and
by much more simple way. 
\end{proof}

\begin{Lemma}\label{Ric02_6-1-(c)}
The set $\{X^\bullet(S)\,|\,S\in\Irr_k(A) \}$
generates $\mathcal D$ as a triangulated category,
where $\Irr_k(A):=\{ k_G,10,10',15,15^*,34,45,45'\}$.
Namely, Condition (c) in \cite[Theorem 6.1]{Ric02} is satisfied.
\end{Lemma}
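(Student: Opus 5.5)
The plan is to show that the thick (triangulated) subcategory $\mathcal T$ of $\mathcal D$ generated by $\{X^\bullet(S)\mid S\in\Irr_k(A)\}$ contains every simple $B$-module, viewed as a complex in degree $0$. Since $B$ has finite global dimension only in the derived sense we need, it suffices to know that $\mathcal D$ is generated as a triangulated category by the simple $B$-modules. Indeed, every bounded complex is an iterated extension of its shifted cohomology modules, and every finitely generated module has a finite composition series; both facts give distinguished triangles. So the whole task is to produce the eight simples $k_H,3,3^*,6,6^*,7,15,15^*$ inside $\mathcal T$.

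First, Lemma~\ref{complex} gives four of them for free: $X^\bullet(k_G)\cong k_H$, $X^\bullet(34)\cong 7$, $X^\bullet(15)\cong 15$ and $X^\bullet(15^*)\cong15^*$ are stalk complexes. Next we use $X^\bullet(45)$, whose only cohomology is $\H^{-2}=6$ and $\H^{-1}=15$. The truncation triangle $\H^{-2}[2]\to X^\bullet(45)\to \H^{-1}[1]\to$ exhibits $6[2]$ as the cone of a morphism between $X^\bullet(45)$ and $15[1]$, both of which lie in $\mathcal T$. Hence $6\in\mathcal T$. In the same way, $X^\bullet(45')$ gives $6^*\in\mathcal T$, alternatively by applying $\tau$ and (\ref{sigma-tau}). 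Lemma~\ref{45} can be used instead to see these triangles concretely.

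Then we treat $X^\bullet(10)$, which has cohomology only in degree $-1$. We have $X^\bullet(10)\cong U(k_H,7,3,6)[1]$, so $U:=U(k_H,7,3,6)\in\mathcal T$. The short exact sequences coming from its radical (socle) filtration give triangles. Peeling off $6$, which is the top, we get $\Omega$-type submodule $U(k_H,7,3)$ in $\mathcal T$, and after that $k_H$ and $7$ in turn. This shows $3\in\mathcal T$. It is cleanest to read the filtration from the bottom: $k_H\subset U(k_H,7)\subset U(k_H,7,3)\subset U$ (or top-down, according to the uniserial convention) and use the two-out-of-three property of thick subcategories at each step. Symmetrically, $X^\bullet(10')$ yields $3^*$.

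At this point all simple $B$-modules lie in $\mathcal T$, and hence $\mathcal T=\mathcal D$. The only point needing care, and the main obstacle, is the bookkeeping that the truncation triangles are genuinely available. For $X^\bullet(45)$ this is standard, since there are exactly two nonzero cohomologies. The rest is routine, because thick closure under cones is all that is used; no nontrivial Hom computation is required.
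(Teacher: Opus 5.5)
Your proof is correct, but you obtain $3$ and $3^*$ differently from the paper. The paper first places $k_H,7,15,15^*$ in $\mathcal X$. It then asserts $\Omega^{-1}(15)\in\mathcal X$ and extracts $3$ from it, using that $3$ occurs exactly once in $P(15)/15$ (Lemma~\ref{P(6)-P(15)}(i)) while all other composition factors are already known. Only afterwards does it use $X^\bullet(45)$ and $X^\bullet(45')$ to get $6,6^*$, and the complexes $X^\bullet(10),X^\bullet(10')$ play no role. You reverse the order. You take $6,6^*$ first from the truncation triangles of $X^\bullet(45),X^\bullet(45')$, which needs only $15,15^*$. You then recover $3,3^*$ from $X^\bullet(10)\cong U(k_H,7,3,6)[1]$ and $X^\bullet(10')\cong U(k_H,7,3^*,6^*)[1]$, once $k_H,7,6,6^*$ are available.

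What your route buys is that every step is a cone of objects already shown to lie in the subcategory. By contrast, the paper's inference ``$15\in\mathcal X$, hence $\Omega^{-1}(15)\in\mathcal X$'' is automatic in the stable category but not in $\mathcal D$. It rests on the triangle $15\to P(15)\to\Omega^{-1}(15)\to 15[1]$, so it is equivalent to $P(15)\in\mathcal X$, which is not established at that stage ($3$ is itself a composition factor of $P(15)$). Your use of $X^\bullet(10),X^\bullet(10')$ avoids this entirely.

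Two slips in your write-up are harmless but should be corrected.
\begin{itemize}
\item $B$ is a non-semisimple symmetric algebra, so it has infinite global dimension. No such hypothesis is needed, because $\D^{\mathrm b}$ of any finite-dimensional algebra is generated by its simple modules, exactly as your next sentence argues.
\item In the paper's convention $U(k_H,7,3,6)$ has head $k_H$ and socle $6$, since it is the kernel of $P(6)\twoheadrightarrow F(10)$. So its submodule chain is $6\subset U(3,6)\subset U(7,3,6)\subset U(k_H,7,3,6)$, and $U(k_H,7,3)$ is the quotient by the socle, not a submodule. Short exact sequences give triangles whichever end the known terms sit at, so the two-out-of-three argument is unaffected.
\end{itemize}
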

\begin{proof}
Let $\mathcal X$ be the full subcategory of $\mathcal D$ generated by the set
$\{X^\bullet(S)\,|\,S\in\Irr_k(A)\}$. 
We claim that $\mathcal X=\mathcal D$.
For an object $Y$ in $\mathcal D$, we simply write $Y\in\mathcal X$ when $Y$ is
an object of $\mathcal X$.
By Lemmas~\ref{complex} and \ref{F(k)F(15)F(15*)}, 
\begin{equation}\label{k_H}
k_H,15,15^*, 7\in\mathcal X.
\end{equation}
Hence $L:=\Omega^{-1}(15)\in\mathcal X$.
Then, Lemma~\ref{P(6)-P(15)}(i) implies that
$L$ contains the simple $3$ as a composition factor
with multiplicity one, and all the simples other than
$3$ in the composition factors of $L$ are 
in $\mathcal X$ by (\ref{k_H}), and hence $3\in\mathcal X$.
Similar for $3^*$, which means that
\begin{equation}\label{3}
3, 3^*\in\mathcal X.
\end{equation}
Next, since $X^\bullet(45)\in\mathcal X$, the information on its
cohomology groups in Lemma~\ref{complex} tells us that $6\in\mathcal X$
by using (\ref{k_H}) and (\ref{3}).
Similar for $6^*$, so that
\begin{equation}\label{6}
6, 6^*\in\mathcal X.
\end{equation}
Since $\Irr_k(B)=\{k_H,3,3^*,6,6^*,7,15,15^*\}$ by Notation~\ref{Notation1},
it follows from (\ref{k_H}), (\ref{3}) and (\ref{6}) that all the simples in $B$
are in $\mathcal X$, which surely implies that $\mathcal X=\mathcal D$.
\end{proof}

\section{Proofs of the Results}
\begin{proof}[Proof of Theorem~\ref{MainTheorem}]
Follows by Lemmas~\ref{complex}$\sim$\ref{Ric02_6-1-(c)}
except Lemma~\ref{Hom(X(45),X(45))},
\cite[Theorem~6.1]{Ric02} and \cite[Theorem~4.4]{Hol03}.
\end{proof}

\begin{proof}[Proof of Corollary~\ref{Cor}]
Set $\tilde G:=\Aut(G)$ and $\tilde H:=\Aut(H)$, so that
$\tilde G=G\rtimes\langle\sigma\rangle\cong G\rtimes C_2$
and $\tilde H=G\rtimes\langle\tau\rangle\cong H\rtimes C_2$
by \cite[p.31 and p.13]{Atlas}.
Since $|\Out({\sf M}_{12})|=|\Out(\SL_3(3))|=2$ that is prime to our $p:=3$,
Clifford's/Maschke's theorems and essentially the same method taken in the proof of 
Theorem~\ref{MainTheorem}
do work to prove this corollary.
Precisely speaking \cite[Theorem 2]{Kos87} and \cite[Theorem 2]{KW99a} imply the result
(see the following table).

\begin{table}[h]
\vspace{-1cm}
\caption*{}
%\label{table2}
\centering
 \begin{tabular}{l |ccccccccc| l}
{\rm degree} &$-3$& &$-2$ && $-1$ && $0$ &&$1$& 
   ${\mathrm{H}}^i:={\mathrm{H}}^i(X^\bullet(\widetilde S))$ \\
 \hline
&&&&&&&&& \\
$X^\bullet(k_{\tilde G})$ & $0$&$\rightarrow$& $0$ &$\rightarrow$& $0$ 
     &$\rightarrow$& $\tilde F(k_{\tilde G})$ &$\rightarrow$&$0$& $\H^0=k_{\tilde H}$ \\ 
$X^\bullet(\tilde 1)$ & $0$&$\rightarrow$& $0$ &$\rightarrow$& $0$ 
 &$\rightarrow$& $\tilde F(\tilde 1)$ &$\rightarrow$&$0$& ${\mathrm{H}}^0=\tilde 1$ \\     
$X^\bullet(\tilde{34})$ & $0$&$\rightarrow$&$0$ &$\rightarrow$& $0$ 
   &$\rightarrow$& $\tilde F(\tilde{34})$ &$\rightarrow$&$0$& ${\mathrm{H}}^0={\tilde 7}$ \\ 
 $X^\bullet(\tilde{34'})$ & $0$&$\rightarrow$&$0$ &$\rightarrow$& $0$ 
   &$\rightarrow$& $\tilde F(\tilde{34'})$ &$\rightarrow$&$0$& ${\mathrm{H}}^0={\tilde 7'}$ \\       
$X^\bullet(\tilde{30})$ & $0$&$\rightarrow$&$0$ &$\rightarrow$& $0$ &$\rightarrow$
   & $\tilde F(\tilde{30})$ &$\rightarrow$&$0$& ${\mathrm{H}}^0=\tilde{30}$ \\   
%     
%$X^\bullet(15^*)$ & $0$&$\rightarrow$&$0$ &$\rightarrow$& $0$ &$\rightarrow$& $F(15^*)$ &
%     $\rightarrow$&$0$& ${\mathrm{H}}^0=15^*$ \\     
%
$X^\bullet(\tilde{20})$ & $0$&$\rightarrow$&$0$ &$\rightarrow$& $P(\tilde{12})$ 
     &$\rightarrow$& $\tilde F(\tilde{20})$ &$\rightarrow$&$0$
     & ${\mathrm{H}}^{-1}=\tilde V$    \\  
     %
%$X^\bullet(10')$ & $0$&$\rightarrow$&$0$ &$\rightarrow$& $P(6^*)$ &$\rightarrow$& $F(10')$ &
%     $\rightarrow$&$0$& ${\mathrm{H}}^{-1}=U(k_H,7,3^*,6^*)$ \\   
%
$X^\bullet(\tilde{90})$ & $0$&$\rightarrow$&$P(\tilde{12})$ &$\rightarrow$
 &$P(\tilde 6)\oplus P(\tilde{12})$ &$\rightarrow$& $\tilde F(\tilde{90})$ &
     $\rightarrow$&$0$& ${\mathrm{H}}^{-2}=\tilde{12}, \ \  {\mathrm{H}}^{-1}=\tilde{30}$ 
\end{tabular}
\end{table}

\noindent
where 
\[
\begin{tikzpicture}
\draw
(-1,-1)node[left](){$\tilde V:=$}
(-0.5,-0.3)node[below](){\ \ $k_{\tilde H}$}
(0.5,-0.8)node[above](){$\tilde 1$}
(-0.5,-0.7)--(-0.5,-1)node[below](){$\tilde 7$}
(0.5,-0.7)--(0.5,-1)node[below](){$\tilde{7'}$}
(-0.5,-1.5)--(0,-1.8)node[below](){$\tilde 6$}
(0.5,-1.5)--(0.1,-1.8)
(0,-2.3)--(0,-2.6)node[below](){$\tilde{12}$}
;
\end{tikzpicture}
.
\]
\end{proof}

\begin{proof}[Proof of Corollary~\ref{HH^1}]
Follows from Theorem~\ref{MainTheorem} and \cite[Proposition 2.21.9]{Lin18}.
%\cite[Proposition 2.5]{Ric91}.
\end{proof}

\begin{Remark}
Compare Theorem~\ref{MainTheorem} with
\cite[Proposition 6.2 (S3), (S5)]{AE25}.
\end{Remark}

%\pagebreak
\noindent {\bf Acknowledgment.} 
{\small
The first author is grateful to Alexander Zimmermann for answering questions
on his book \cite{Zim14} and also to Markus Linckelmann for useful information.
    }

\end{document}